 \makeatletter \@addtoreset{equation}{section}
\newtheorem{thm}{Theorem}[section]
\newtheorem{hyp}[thm]{Hypothesis}{\rm}
\newtheorem{lemm}[thm]{Lemma}
\newtheorem{coro}[thm]{Corollary}
\newtheorem{prop}[thm]{Proposition}
\newtheorem{defi}[thm]{Definition}
\newtheorem{rmk}[thm]{Remark}
\newcommand{\R}{{\mathbb R}}
\newcommand{\N}{{\mathbb N}}
\newcommand{\Rd}{\mathbb R^d}
\newcommand{\supp}{{\rm{supp}}\,}
\newcommand{\bd}{\begin{defi}}
\newcommand{\ed}{\end{defi}}
\newcommand{\nnm}{\nonumber}
\newcommand{\be}{\begin{equation}}
\newcommand{\ee}{\end{equation}}
\newcommand{\barr}{\begin{array}}
\newcommand{\earr}{\end{array}}
\newcommand{\bmn}{\begin{eqnarray}}
\newcommand{\emn}{\end{eqnarray}}
\newcommand{\bnm}{\begin{eqnarray*}}
\newcommand{\enm}{\end{eqnarray*}}
\newcommand{\bln}{\begin{subequations}}
\newcommand{\eln}{\end{subequations}}
\newcommand{\ba}{\begin{align}}
\newcommand{\ea}{\end{align}}
\newcommand{\banm}{\begin{align*}}
\newcommand{\eanm}{\end{align*}}
\newcommand{\one}{\mbox{$1\!\!\!\;\mathrm{l}$}}
\title[Compactness and invariance properties of evolution operators]{Compactness and invariance properties of evolution operators associated with Kolmogorov operators with unbounded coefficients
}
\author{L. Angiuli}
\address{Dipartimento di Matematica ``Ennio De Giorgi'', Universit\`a del Salento, Via Per Arnesano, C.P. 193, I-73100, Lecce, Italy.}
\email{luciana.angiuli@unisalento.it}
\author{L. Lorenzi}
\address{Dipartimento di Matematica, Universit\`a degli Studi di Parma, Parco Area delle Scienze 53/A, I-43124 Parma, Italy.}
\email{luca.lorenzi@unipr.it}
\keywords{nonautonomous second-order elliptic
operators, unbounded coefficients, evolution operators, compactness, invariant subspaces}
\subjclass[2000]{35K10, 35K15, 47B07}
\begin{document}

\begin{abstract}
In this paper we consider nonautonomous elliptic operators ${\mathcal A}$ with nontrivial potential term defined in $I\times\R^d$, where $I$ is
a right-halfline (possibly $I=\R$). We prove that we can associate an
evolution operator $(G(t,s))$ with ${\mathcal A}$ in the space of all bounded and continuous functions on $\Rd$. We also study the compactness properties of the operator $G(t,s)$. Finally, we provide sufficient conditions guaranteeing that each operator $G(t,s)$ preserves
the usual $L^p$-spaces and $C_0(\Rd)$.

\end{abstract}

\maketitle
\section{Introduction}
Second-order autonomous elliptic operators with unbounded
coefficients have been the subject of many mathematical researches.
The interest in such operators comes from their many  applications to branches of life sciences such as mathematical finance.
Starting from the pioneering papers by It\^o \cite{ito} and Azencott \cite{azencott},
the literature has spread out considerably and an almost systematic
treatment of such operators (and their associated semigroups) is nowadays available. We refer the reader to e.g., \cite{luciana-giorgio-chiara,BerLor07Ana,MetPalWac02} and their rich bibliographies.

On the contrary the study of nonautonomous second-order elliptic
operators is at a preliminary level. The pioneering paper is
\cite{DaPLun07} where the nonautonomous Ornstein-Uhlenbeck operator
\begin{eqnarray*}
({\mathcal
L}(t)\varphi)(x)=\sum_{i,j=1}^dq_{ij}(t)D_{ij}\varphi(x)+\sum_{i,j=1}^db_{ij}(t)x_jD_i\varphi(x),\qquad\;\,(t,x)\in\R^{1+d},
\end{eqnarray*}
has been studied in the case when its coefficients are $T$-periodic
for some $T>0$. The analysis of \cite{DaPLun07} has been continued
in a couple of papers by Geissert and Lunardi (see
\cite{GeisLun08,GeisLun09}) where $\mathcal{L}$ and the associated evolution operator
$(L(t,s))$ have been extensively studied both in periodic and nonperiodic
settings.

Recently, in \cite{KunLorLun09Non} the more general
nonautonomous elliptic operator
\begin{eqnarray*}
({\mathcal
A}(t)\varphi)(x)=\sum_{i,j=1}^dq_{ij}(t,x)D_{ij}\varphi(x)+\sum_{j=1}^db_j(t,x)D_j\varphi(x),\qquad\;\,(t,x)\in I\times\R^d,
\end{eqnarray*}
has been studied, when $I$ is a right-halfline (possibly $I=\R$). Under rather mild regularity conditions on its
coefficients and assuming the ellipticity condition
\begin{eqnarray*}
\sum_{i,j=1}^dq_{ij}(t,x)\xi_i\xi_j\ge\eta_0|\xi|^2,\qquad\;\,(t,x)\in I\times\Rd,\;\,\xi\in\Rd,
\end{eqnarray*}
for some positive constant $\eta_0$, the existence of a (unique) evolution operator $(G(t,s))$ associated with $\mathcal{A}$ in $C_b(\Rd)$ (the
 space of all bounded and continuous functions $f:\Rd\to\R$) has been proved.
The main properties of the evolution operator $G(t,s)$ in $C_b(\Rd)$ have been extensively studied and the authors extended many of
the results proved for the Ornstein-Uhlenbeck operator.

In the autonomous case it is well known that, in general, the semigroups associated with elliptic operators with
unbounded coefficients do not well behave in the usual $L^p$-spaces related to the Lebesgue measure. On the contrary,
they enjoy nice properties in the $L^p$-spaces related to the so-called invariant measure when it exists.
In the nonautonomous case, the natural counterpart of the invariant measure is not a single measure, but a one-parameter family of probability measures $\{\mu_t: t\in I\}$, which are
called \emph{evolution system of invariant measures} in \cite{DPR1} and \emph{entrance laws at $-\infty$} in \cite{dynkin},
and are characterized by the following property:
\begin{eqnarray*}
\int_{\R^d}G(t,s)fd\mu_t=\int_{\Rd}fd\mu_s,\qquad\;\,s<t,\;\,f\in C_b(\Rd).
\end{eqnarray*}
Such a property allows to extend each operator $G(t,s)$, in a straightforward way, to a contraction, mapping $L^p(\Rd,\mu_s)$
into $L^p(\Rd,\mu_t)$ for any $s,t\in I$, with $s<t$, and any $p\in [1,+\infty)$.
In \cite{LorLunZam10} the asymptotic behaviour of $G(t,s)$ in these $L^p$-spaces has been studied in the
case when the coefficients of the operator ${\mathcal A}$ are $T$-periodic with respect to the variable $t$. More precisely,
sufficient conditions guaranteeing that $\|G(t,s)f-m_s(f)\|_{L^p(\Rd,\mu_t)}$ goes to $0$ as $t-s\to +\infty$,
when $f\in L^p(\Rd,\mu_s)$ and $m_s(f)=\int_{\Rd}fd\mu_s$, have been obtained, thus generalizing the well-known convergence results of
the autonomous case.

In this paper, we are interested in studying nonautonomous elliptic operators with a nonzero potential term in $C_b(\Rd)$, i.e.,
we are interested in operators of the form
\begin{align*}
(\mathcal{A}(t)\psi)(x)&=\sum_{i,j=1}^d q_{ij}(t,x)D_{ij}\psi(x)+
\sum_{j=1}^d b_j(t,x)D_j\psi(x)-c(t,x)\psi(x),
\end{align*}
for any $(t,x)\in I\times\R^d$,
where $c$ is bounded from below and $I$ is as above.
Adapting the arguments used in the case of no potential term, we first show in Section \ref{sect-2} that we can
associate an evolution operator $(G(t,s))$ in $C_b(\Rd)$ with the operator ${\mathcal A}$. In fact, $G(t,s)f$ can be obtained as the ``limit'' as $n\to +\infty$, in an appropriate sense, of both the sequences of solutions to the Cauchy-Dirichlet and Cauchy-Neumann problems, for the equation $D_tu-{\mathcal A}(t)u=0$ in the ball $B(0,n)$.
Next, in Section \ref{sect-3} we show that it is possible to associate a Green function $g$ with the evolution operator $G(t,s)$, namely,
\begin{equation}
(G(t,s)f)(x)=\int_{\Rd}g(t,s,x,y)f(y)dy,\qquad\;\,s,t\in I,\;\,s<t,\;\,x\in\Rd,
\label{kernel-intro}
\end{equation}
for every $f\in C_b(\Rd)$.
For any fixed $s$ and almost any $y\in\Rd$, $g(\cdot,s,\cdot,y)$ is smooth and solves the equation $D_tg-\mathcal{A}(t)g=0$.
Formula \eqref{kernel-intro} allows us to extend each operator $G(t,s)$ to the space $B_b(\Rd)$ of all bounded and Borel measurable functions $f:\Rd\to\R$. The so extended operators turn out to be strong Feller (i.e., $G(t,s)$ maps $B_b(\Rd)$ into $C_b(\Rd)$) and
irreducible (i.e., if $U\neq\varnothing$ is a Borel measurable set, then $(G(t,s)\chi_{U})(x)>0$ for any $x\in\Rd$ and any $s<t$).
We also prove that, for any continuous function $f:\Rd\to\R$ vanishing at infinity and any $t\in I$, the function $G(t,\cdot)f$ is continuous
in $(-\infty,t]$ with values in $C_b(\Rd)$, for any $t\in I$. We then deduce that
$G(\cdot,\cdot)f$ is continuous in $\{(t,s,x)\in I\times I\times\Rd: t\ge s\}$.
Finally,  under an additional assumption, we establish an integral inequality which will play a crucial role in what follows.

Section \ref{sect-4} is devoted to the study of the compactness of the operator $G(t,s)$ in $C_b(\Rd)$. We show that
$G(t,s)$ is compact if and only if the family of measures $\{g(t,s,x,y)dy,~x\in\Rd\}$ (which are not probability measures if $c\neq 0$) is tight, where tightness means that for any $\varepsilon>0$ there exists $R_0>0$ such that
\begin{eqnarray*}
\sup_{x\in\Rd}\int_{\Rd\setminus B(0,R)}g(t,s,x,y)dy\le \varepsilon,
\end{eqnarray*}
provided that $R\geq R_0$. Sufficient conditions are then provided
for the previous family of measures be tight and consequently to show that, in this case, $G(t,s)$ preserves neither $C_0(\Rd)$ nor $L^p(\Rd)$.
Adapting some of the ideas in the proof of Theorem \ref{thm-comp}, we provide a sufficient condition
to guarantee that the function $G(\cdot,\cdot)f$ is continuous in $\{(t,s,x)\in I\times I\times\Rd: t\ge s\}$ for any
bounded and continuous function $f:\Rd\to\R$, thus extending the similar result of Section \ref{sect-3} proved
for functions vanishing at infinity.

Section \ref{sect-5} is then devoted to study the invariance of $C_0(\Rd)$ and $L^p(\Rd)$ ($p\in [1,+\infty)$), under the action
of $G(t,s)$, providing sufficient conditions for this property hold.
Examples of nonautonomous operators to which the main results of this paper apply are provided in Section \ref{sect-6}.

\subsection*{Notations}
We denote by $B_b(\R^d)$ the Banach space of all bounded and
Borel  measurable functions
$f:\R^d\to\R$, and by $C_b(\R^d)$ its subspace of all  continuous
functions.   $B_b(\R^d)$ and $C_b(\R^d)$ are endowed with the
sup norm $\|\cdot\|_{\infty}$. For $k>0$ $C^k_b(\R^d)$ is the
set of all functions $f\in C_b(\R^d)$ whose derivatives up to the
$[k]$th-order are bounded and $(k-[k])$-H\"older continuous in $\R^d$.
Here, $[k]$ denotes the integer part of $k$.
We use the
subscript ``$c$'' (resp. ``$0$'') instead of ``$b$''  for spaces of functions  with compact support
(resp. for spaces of functions vanishing at infinity).

Let ${\mathcal O}\subset\R^{1+d}$ be an open set  or the closure of
an open set. For $0<\alpha <1$ we denote by  $C^{\alpha/2,\alpha}_{\rm loc}({\mathcal O})$  the set of  functions $f:{\mathcal O}\to\R$
whose restrictions to any compact set ${\mathcal O}_0\subset {\mathcal O}$ belong to $C^{\alpha/2,\alpha} ({\mathcal O}_0)$.
Similarly, $C^{1+\alpha/2,2+\alpha}_{\rm loc}({\mathcal O})$ is the
subset of $C({\mathcal O})$ of the
functions $f$ such that  the time derivative $D_tf:=\frac{\partial f}{\partial t}$ and the spatial derivatives $D_if:=\frac{\partial
f}{\partial x_i}$, $D_{ij}f:=\frac{\partial^2f}{\partial x_i\partial x_j}$ exist and belong to $C^{\alpha/2,\alpha}_{\rm loc}({\mathcal O})$.

We denote by ${\rm Tr}(Q)$ and $\langle x,y\rangle$ the trace of the square matrix $Q$ and the Euclidean scalar product
of the vectors $x,y\in\Rd$, respectively.
By $\chi_A$ we denote the characteristic function of the set $A\subset\Rd$ and by $\one$ we denote the function which is identically
equal to 1 in $\Rd$.

We set $\Lambda:=\{(t,s)\in I\times I: t\geq s\}$ and, for every bounded set $J\subset I$, we denote by $\Lambda_J$ the intersection
of $\Lambda$ and $J\times J$. For any $t\in I$ we denote by $I_t$ the intersection of $I$ and $(-\infty,t]$ and
by ${\mathcal A}_{\lambda}$ the operator ${\mathcal A}-\lambda I$, for any $\lambda\in\R$.
Finally, by $a\vee b$ and $a\wedge b$ we denote, respectively, the maximum and the minimum between $a,b\in\R$.

\section{The evolution operator}
\label{sect-2}
Let $I$ be an interval, which is either $\R$ or a right halfline, and let the operators $\mathcal{A}(t)$, $t \in I$,
be defined on smooth functions $\psi$ by
\begin{align*}
(\mathcal{A}(t)\psi)(x)&=\sum_{i,j=1}^d q_{ij}(t,x)D_{ij}\psi(x)+
\sum_{i=1}^d b_i(t,x)D_i\psi(x)-c(t,x)\psi(x),
\end{align*}
for any $(t,x)\in I\times\R^d$, under the following hypothesis:
\begin{hyp}\label{hyp1}
\begin{enumerate}[\rm (i)]
\item
$q_{ij}$, $b_{i}$ $(i,j=1,\dots,d)$ and $c$ belong
to $C^{\alpha/2,\alpha}_{\rm loc}(I\times \R^d)$;
\item
$c_0:=\inf_{I\times\R^d}c>-\infty$;
\item
for every $(t,x)\in I\times \R^d$, the matrix $Q(t,x)=(q_{ij}(t,x))$
is symmetric and there exists a function $\eta:I\times\R^d\to\R$ such that $0<\eta_0:=\inf_{I\times\R^d}\eta$
and
\begin{eqnarray*}
\langle Q(t,x)\xi,\xi\rangle\geq\eta(t,x)|\xi|^2,\quad \xi\in \R^d,\quad (t,x)\in I\times \R^d;
\end{eqnarray*}
\item
for every bounded interval $J\subset I$ there exist a positive function $\varphi=\varphi_J\in C^2(\R^d)$
and a real number $\lambda=\lambda_J$ such that
\begin{eqnarray*}
\;\;\;\;\;\qquad\lim_{|x|\to +\infty}\varphi(x)=+\infty \quad\textrm{and}\quad
(\mathcal{A}(t)\varphi)(x)-\lambda\varphi(x)\leq 0,\quad (t,x)\in J\times \R^d.
\end{eqnarray*}
\end{enumerate}
\end{hyp}


We start by proving a maximum principle.
\begin{prop}\label{primax}
Let $s\in I$, $T>s$ and $R>0$. If $u \in C_b([s,T]\times
\Rd\setminus B(0,R))\cap C^{1,2}((s,T]\times
\Rd\setminus\overline{B(0,R)})$ satisfies
\begin{displaymath}
\left\{
\begin{array}{ll}
D_tu(t,x)-\mathcal{A}(t)u(t,x)\leq 0,\quad &(t,x)\in (s,T]\times \R^d\setminus\overline{B(0,R)},\\[1mm]
u(t,x)\le 0, &(t,x)\in [s,T]\times\partial B(0,R),\\[1mm]
u(s,x)\leq 0, &x\in \R^d,
\end{array}\right.
\end{displaymath}
then $u\leq 0$. Similarly, if $u\in C_b([s,T]\times \Rd)\cap C^{1,2}((s,T]\times \Rd)$
satisfies
\begin{displaymath}
\left\{
\begin{array}{ll}
D_tu(t,x)-\mathcal{A}(t)u(t,x)\leq 0,\quad &(t,x)\in (s,T]\times \R^d,\\[1mm]
u(s,x)\leq 0, &x\in \R^d,
\end{array}\right.
\end{displaymath}
then $u\le 0$. In particular,
if $u\in C_b([s,T]\times\R^d)\cap
C^{1,2}((s,T]\times\R^d)$ solves the Cauchy problem
\begin{displaymath}
\left\{
\begin{array}{ll}
D_tu(t,x)-\mathcal{A}(t)u(t,x)=0,\quad &(t,x)\in (s,T]\times \R^d,\\[1mm]
u(s,x)=f(x),  &x\in \R^d,
\end{array}\right.
\end{displaymath}
then
\begin{displaymath}
\|u(t,\cdot)\|_{\infty}\le e^{-c_0(t-s)}\|f\|_{\infty},\qquad\;\,t>s.
\end{displaymath}
\end{prop}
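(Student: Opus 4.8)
The plan is to prove the three assertions in cascade, each reducing to the previous one. For the first (exterior-domain) statement, the natural idea is to kill the growth at infinity with the Lyapunov function $\varphi$ supplied by Hypothesis \ref{hyp1}(iv). Fix a bounded interval $J\supseteq[s,T]$ and let $\varphi=\varphi_J$, $\lambda=\lambda_J$ be as in that hypothesis. For $\varepsilon>0$ consider the comparison function $v_\varepsilon(t,x):=u(t,x)-\varepsilon e^{(\lambda-c_0)(t-s)}\varphi(x)$ (or, more simply, $u(t,x)-\varepsilon e^{\lambda(t-s)}\varphi(x)$, exploiting $c\ge c_0$). A direct computation gives $D_tv_\varepsilon-\mathcal{A}(t)v_\varepsilon\le -\varepsilon e^{(\lambda-c_0)(t-s)}\big((\lambda-c_0)\varphi-\mathcal{A}(t)\varphi+c\varphi\big)\le 0$ by Hypothesis \ref{hyp1}(iv), while $v_\varepsilon\le u\le 0$ on the parabolic boundary pieces $\{s\}\times\Rd$ and $[s,T]\times\partial B(0,R)$. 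Since $u$ is bounded and $\varphi(x)\to+\infty$ as $|x|\to\infty$, there is $R_\varepsilon>R$ with $v_\varepsilon<0$ on $[s,T]\times(\Rd\setminus B(0,R_\varepsilon))$. On the bounded cylinder $[s,T]\times(\overline{B(0,R_\varepsilon)}\setminus B(0,R))$ the classical parabolic maximum principle (applicable because the coefficients are locally Hölder, $Q$ is uniformly elliptic there, and $-c$ is bounded above — one absorbs the zero-order term in the usual way, or applies the maximum principle to $e^{-c_0 t}v_\varepsilon$) yields $v_\varepsilon\le 0$ on that cylinder, hence $v_\varepsilon\le 0$ on all of $[s,T]\times(\Rd\setminus B(0,R))$. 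Letting $\varepsilon\to 0^+$ gives $u\le 0$.

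For the second (whole-space) statement, the argument is the same but cleaner: there is no spatial boundary, so one uses $v_\varepsilon(t,x):=u(t,x)-\varepsilon e^{\lambda(t-s)}\varphi(x)$, checks $D_tv_\varepsilon-\mathcal{A}(t)v_\varepsilon\le 0$ and $v_\varepsilon(s,\cdot)\le u(s,\cdot)\le 0$ exactly as above, notes that $v_\varepsilon\to-\infty$ uniformly in $t$ as $|x|\to\infty$ so that any positive maximum of $v_\varepsilon$ on $[s,T]\times\Rd$ is attained at an interior point or on $\{s\}\times\Rd$, and concludes $v_\varepsilon\le 0$ by the interior parabolic maximum principle; then $\varepsilon\to 0^+$. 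Alternatively the second statement follows from the first by a limiting argument in $R\to 0^+$, but the direct proof is shorter.

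The last assertion follows by applying the second statement twice. Given the solution $u$ of the Cauchy problem with datum $f$, set $w^{\pm}(t,x):=\pm u(t,x)-e^{-c_0(t-s)}\|f\|_\infty$. Using $c\ge c_0$ one computes $D_tw^{\pm}-\mathcal{A}(t)w^{\pm}=\mp(D_tu-\mathcal{A}(t)u)-e^{-c_0(t-s)}\|f\|_\infty\big(c-c_0\big)\le 0$ on $(s,T]\times\Rd$, and $w^{\pm}(s,\cdot)=\pm f-\|f\|_\infty\le 0$; moreover $w^{\pm}$ is bounded and continuous on $[s,T]\times\Rd$ and $C^{1,2}$ in the interior, so the second part applies and gives $\pm u(t,x)\le e^{-c_0(t-s)}\|f\|_\infty$, i.e. $\|u(t,\cdot)\|_\infty\le e^{-c_0(t-s)}\|f\|_\infty$ for every $t\in(s,T]$ (and trivially at $t=s$).

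The main obstacle is purely technical: justifying the application of the classical parabolic maximum principle on bounded cylinders in the presence of the (possibly large, only locally bounded) zero-order coefficient $-c$ and of merely $C^{\alpha/2,\alpha}_{\rm loc}$ coefficients. This is handled by the standard device of passing to $e^{-c_0(t-s)}v_\varepsilon$, which replaces $-c$ by $-(c-c_0)\le 0$, so that a nonnegative interior maximum of the transformed function would force, at that point, $D_t(\cdot)\ge 0$ and $\mathcal{A}(t)(\cdot)-$(nonpositive term)$\,\le 0$ with the elliptic second-order part $\le 0$ and the gradient term $=0$, contradicting the differential inequality unless the function is $\le 0$; the boundedness of $u$ together with $\varphi(x)\to\infty$ is what confines the analysis to a compact region where all coefficients are genuinely bounded and Hölder continuous.
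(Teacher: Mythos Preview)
Your proposal is correct and follows essentially the same strategy as the paper. The paper packages the Lyapunov penalization slightly differently---it sets $v_n(t,x)=e^{-\lambda(t-s)}u(t,x)-n^{-1}\varphi(x)$ (so the exponential weight is on $u$ rather than on $\varphi$) and argues directly that a positive maximum of $v_n$ would contradict the differential inequality, rather than first restricting to a bounded cylinder; for the estimate it uses $v_\pm=\pm e^{c_0(t-s)}u-\|f\|_\infty$ with the operator $\mathcal{A}_{-c_0}$, which is your $w^\pm$ up to multiplication by $e^{c_0(t-s)}$. One cosmetic slip: in your computation for $w^\pm$ the sign in front of $(D_tu-\mathcal{A}(t)u)$ should be $\pm$, not $\mp$, but since that term vanishes the conclusion is unaffected.
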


\begin{proof}
The proof is similar to that of the autonomous case. For the reader's convenience we go into details.

Without loss of generality we can assume that $\lambda>-c_0$. As it
is immediately seen, for any $n\in\mathbb N$, the function
$v_n(t,x)=e^{-\lambda (t-s)}u(t,x)-n^{-1}\varphi(x)$ satisfies the
inequalities
\begin{displaymath}
\left\{
\begin{array}{ll}
D_tv_n(t,x)-{\mathcal A}_{\lambda}(t)v_n(t,x)\leq 0,\quad &(t,x)\in (s,T]\times
\R^d\setminus\overline{B(0,R)},\\[1mm]
v_n(t,x)\le 0,\quad &(t,x)\in
[s,T]\times\partial B(0,R),\\[1mm]
 v_n(s,x)\leq 0, &x\in
\R^d.
\end{array}\right.
\end{displaymath}
Since $u$ is bounded in $[s,T]\times\R^d$ and $\varphi$ blows up as
$|x|\to +\infty$, the function $v_n$ tends to $-\infty$ as $|x|\to
+\infty$, uniformly with respect to $t\in [s,T]$. Hence, it has a
maximum at some point $(t_0,x_0)$. Such a maximum cannot be
positive, otherwise it would be $t_0>s$ and
$x_0\in\Rd\setminus\overline{B(0,R)}$, and from the differential
inequality we would be led to a contradiction. Hence, $v_n\le 0$ in
$[s,T]\times\Rd\setminus B(0,R)$. Letting $n\to+\infty$, yields
$u\le 0$ in $[s,T]\times\Rd\setminus B(0,R)$. Clearly, the same proof
can be applied to show the second statement of the theorem.

To prove the last part of the statement, it suffices to consider the
functions $v_{\pm}$ defined by $v_{\pm}(t,x)=\pm e^{c_0(t-s)}u(t,x)-\|f\|_{\infty}$ for any $(t,x)\in [s,T]\times\Rd$,
which satisfy the differential inequalities
\begin{displaymath}
\left\{
\begin{array}{ll}
D_tv_{\pm}(t,x)-\mathcal{A}_{-c_0}(t)v_{\pm}(t,x)\leq 0,\quad &(t,x)\in (s,T]\times \R^d,\\[1mm]
v_{\pm}(s,x)\leq 0,  &x\in \R^d.
\end{array}\right.
\end{displaymath}
The previous results, applied to the operator ${\mathcal A}_{-c_0}$ (which clearly satisfies Hypothesis \ref{hyp1}), show that $v_{\pm}(t,x)\le 0$ for any $(t,x)\in
[s,T]\times\R^d$ and this gives the assertion at once.
\end{proof}

We can now prove an existence-uniqueness result for the Cauchy problem
\begin{equation}\label{NonA}
\displaystyle{
\left\{
\begin{array}{ll}
D_tu(t,x)=\mathcal{A}(t)u(t,x),\quad &(t,x)\in (s,+\infty)\times \R^d,\\[1mm]
u(s,x)=f(x), & x\in \Rd,
\end{array}\right.
}
\end{equation}
with datum $f \in C_b(\Rd)$. For this purpose for any $n\in\mathbb N$ we introduce the Cauchy problems
\begin{equation}
\displaystyle{ \left\{
\begin{array}{ll}
D_tu_n(t,x)=\mathcal{A}(t)u_n(t,x),\quad &(t,x)\in (s,+\infty)\times B(0,n),\\[1mm]
u_n(t,x)=0, & (t,x)\in (s,+\infty)\times\partial B(0,n),\\[1mm]
u_n(s,x)=f(x), & x\in B(0,n)
\end{array}\right.
} \label{pb-approx-Dirichlet}
\end{equation}
and
\begin{equation}
\displaystyle{ \left\{
\begin{array}{ll}
D_tu_n(t,x)=\mathcal{A}(t)u_n(t,x),\quad &(t,x)\in (s,+\infty)\times B(0,n),\\[1mm]
\displaystyle\frac{\partial u_n}{\partial \nu}(t,x)=0, & (t,x)\in (s,+\infty)\times\partial B(0,n),\\[2mm]
u_n(s,x)=f(x), & x\in B(0,n),
\end{array}\right.
} \label{pb-approx-Neumann1}
\end{equation}
where $\nu=\nu(x)$ denotes the exterior unit normal at $x\in\partial B(0,n)$.
We further denote by $G_n^D(\cdot,s)$ and $G_n^N(\cdot,s)$ the bounded operators on $C_b(\Rd)$ which associate with any $f\in C_b(\Rd)$
the unique classical solution to problems \eqref{pb-approx-Dirichlet} and \eqref{pb-approx-Neumann1}, respectively.

\begin{thm}
\label{thm-1.3} For any $f\in
C_b(\R^d)$ and any $s\in I$ the Cauchy problem \eqref{NonA}
admits a unique solution $u_f\in C([s,+\infty)\times\R^d)\cap
C^{1+\alpha/2,2+\alpha}_{{\rm loc}}((s,+\infty)\times\R^d)$ $(\alpha$ being given by Hypothesis $\ref{hyp1}(i))$, which is bounded in $[s,T]\times\Rd$ for any $T>s$. For any $t>s$ and any $f\in C_b(\Rd)$, set $G(t,s)f:=u_f(t,\cdot)$.
Then, $G(t,s)$ is a bounded linear operator in $C_b(\R^d)$ and
\begin{equation}
\|G(t,s)\|_{{\mathcal L}(C_b(\R^d))}\le e^{-c_0(t-s)},\qquad\;\,t\geq s.
\label{stima-sol}
\end{equation}
Moreover, the following properties hold true:
\begin{enumerate}[\rm (i)]
\item
for any $f \in C_b(\R^d)$, $G_n^N(\cdot,s)f$ converges to $G(\cdot,s)f$
in $C^{1,2}(D)$ for
any compact set $D\subset (s,+\infty)\times\R^d$;
\item
for any $f\in C_b(\R^d)$ and any $s\in I$, the function $G_n^D(\cdot,s)f$ converges to $G(\cdot,s)f$ in $C^{1,2}(D)$ for
any compact set $D\subset (s,+\infty)\times\R^d$. Moreover,
if $f$ is nonnegative, then $G_n(t,s)f$ is increasing to $G(t,s)f$ for any $(t,s)\in\Lambda$.
\end{enumerate}
\end{thm}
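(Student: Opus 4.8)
The plan is to construct the solution $u_f$ as the common limit, as $n\to+\infty$, of the Cauchy--Dirichlet and Cauchy--Neumann approximations $G_n^D(\cdot,s)f$ and $G_n^N(\cdot,s)f$, and to deduce every assertion from uniform a priori bounds together with interior Schauder estimates. \emph{Uniqueness} in the stated class is immediate: if $u_1,u_2$ are two solutions, their difference is bounded on $[s,T]\times\R^d$ for every $T>s$ and solves the homogeneous equation with zero initial datum, so the second part of Proposition \ref{primax} (which is where Hypothesis \ref{hyp1}(iv) enters) forces $u_1-u_2$ to be simultaneously $\le0$ and $\ge0$.

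For existence, fix $f\in C_b(\R^d)$, write $\sharp$ for either $D$ or $N$, and note that for each $n$ and each $T>s$ the problems \eqref{pb-approx-Dirichlet} and \eqref{pb-approx-Neumann1} possess a unique classical solution $u_n^\sharp=G_n^\sharp(\cdot,s)f$ on the smooth bounded cylinder $(s,T]\times B(0,n)$, by the classical parabolic theory (the coefficients being of class $C^{\alpha/2,\alpha}$ there); near the corner $\{s\}\times\partial B(0,n)$ only the boundary datum and the initial datum are attained, separately, which will be harmless. Replacing $u_n^\sharp$ by $e^{c_0(t-s)}u_n^\sharp$, which solves an equation with nonnegative potential $c-c_0$, and applying the maximum principle on the bounded cylinder --- directly in the Dirichlet case, and via Hopf's boundary point lemma to exclude a lateral-boundary maximum in the Neumann case --- yields
\begin{displaymath}
\|u_n^\sharp(t,\cdot)\|_\infty\le e^{-c_0(t-s)}\|f\|_\infty,\qquad t\in[s,T],
\end{displaymath}
uniformly in $n$. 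Now fix a compact $D\subset(s,+\infty)\times\R^d$ and choose $n_0$ with $D\subset(s,+\infty)\times B(0,n_0-1)$; for $n\ge n_0$ the interior parabolic Schauder estimates bound $\|u_n^\sharp\|_{C^{1+\alpha/2,2+\alpha}(D)}$ in terms of the sup bound above and of the local H\"older seminorms of the coefficients, hence uniformly in $n$. By a compactness (Ascoli-type) and diagonal argument along an exhaustion of $(s,+\infty)\times\R^d$, a subsequence of $(u_n^\sharp)_n$ converges in $C^{1,2}_{\rm loc}$, and in $C^{1+\alpha/2,2+\alpha}_{\rm loc}$, to a function $u^\sharp$ which solves $D_tu^\sharp=\mathcal A(t)u^\sharp$ in $(s,+\infty)\times\R^d$ and still obeys the sup bound.

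The main technical point is to check that $u^\sharp$ extends continuously to $[s,+\infty)\times\R^d$ with $u^\sharp(s,\cdot)=f$; this can be done locally. Fix $x_0\in\R^d$ and $\varepsilon>0$, choose $\rho>0$ so that the oscillation of $f$ on $B(x_0,\rho)$ is at most $\varepsilon$, and observe that for all large $n$ one has $\overline{B(x_0,\rho)}\subset B(0,n)$, so that the lateral boundary $\partial B(0,n)$ plays no role near $(s,x_0)$. Comparing $\pm u_n^\sharp$ on the parabolic boundary of the small cylinder $(s,s+\tau]\times B(x_0,\rho)$ with the barriers $f(x_0)\pm\varepsilon\pm C(t-s)\pm C_\delta|x-x_0|^2$, where $C$ and $C_\delta$ are chosen large in terms of $\|f\|_\infty$ and of the local bounds on the coefficients on that cylinder, the comparison principle gives $|u_n^\sharp(t,x)-f(x_0)|\le\varepsilon+C(t-s)+C_\delta|x-x_0|^2$ there for $n$ large; passing to the limit in $n$ and then letting $(t,x)\to(s,x_0)$ yields $u^\sharp(t,x)\to f(x_0)$. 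Hence $u^\sharp\in C([s,+\infty)\times\R^d)\cap C^{1+\alpha/2,2+\alpha}_{\rm loc}((s,+\infty)\times\R^d)$, it is bounded on $[s,T]\times\R^d$ for every $T>s$, and it solves \eqref{NonA}.

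By uniqueness, $u^\sharp=u_f$ is independent both of the subsequence and of the choice $\sharp\in\{D,N\}$; therefore the full sequences $G_n^D(\cdot,s)f$ and $G_n^N(\cdot,s)f$ converge to $u_f$ in $C^{1,2}(D)$ for every compact $D\subset(s,+\infty)\times\R^d$, which is exactly (i) and the first part of (ii). Setting $G(t,s)f:=u_f(t,\cdot)$, linearity follows from uniqueness, $G(t,s)f\in C_b(\R^d)$, and \eqref{stima-sol} follows by letting $n\to+\infty$ in the uniform bound above. Finally, if $f\ge0$ the maximum principle on $B(0,n)$ gives $u_n^D\ge0$, and applying it to $u_{n+1}^D-u_n^D$ on $[s,T]\times B(0,n)$ --- which solves the homogeneous equation, vanishes at $t=s$ and equals $u_{n+1}^D\ge0$ on $\partial B(0,n)$ --- gives $u_{n+1}^D\ge u_n^D$ on $[s,T]\times\overline{B(0,n)}$; thus $(G_n^D(t,s)f)_n$ is nondecreasing, and its monotone (pointwise) limit coincides with $u_f(t,\cdot)=G(t,s)f$ by the convergence just established, the general case following by splitting $f=f^+-f^-$. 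The main obstacle is the argument near $t=s$ and, more generally, the care needed to make the bounded-domain maximum principle (especially in the Neumann case) and the interior estimates yield compactness \emph{uniformly} in $n$.
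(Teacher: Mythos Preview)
Your argument is correct and follows the same overall architecture as the paper's proof: uniqueness from Proposition~\ref{primax}, uniform sup-bounds on $G_n^\sharp(\cdot,s)f$, interior Schauder estimates, Arzel\`a--Ascoli plus diagonalisation to extract a $C^{1,2}_{\rm loc}$-limit, and then uniqueness to upgrade subsequential to full convergence. The monotonicity in~(ii) is handled exactly as in the paper.

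The one genuine difference is how you obtain continuity at $t=s$. The paper proceeds in three steps: first $f\in C^{2+\alpha}_c(\Rd)$ (where Schauder estimates reach down to $t=s$), then $f\in C_0(\Rd)$ by uniform approximation, and finally $f\in C_b(\Rd)$ via a cutoff $\eta$ and the estimate
\[
|G_{n}^N(t,s)((\one-\eta)f_n)|\le M\bigl(e^{-c_0(t-s)}-G_{n}^N(t,s)\eta\bigr),
\]
which exploits the good behaviour of the Neumann approximation on constants. You instead run a direct local barrier argument on a small cylinder $(s,s+\tau]\times B(x_0,\rho)$, comparing $u_n^\sharp$ with $f(x_0)\pm\varepsilon\pm C(t-s)\pm C_\delta|x-x_0|^2$. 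This is legitimate: on the fixed small cylinder the coefficients (including $c$) are bounded, so after choosing $\tau$ small enough that the $c$-term does not overwhelm the $C(t-s)$ term, one can indeed pick $C_\delta$ (to dominate on the lateral boundary via the sup-bound $|u_n^\sharp|\le e^{-c_0(t-s)}\|f\|_\infty$) and then $C$ (to make the barrier a super/subsolution). Your route is more elementary and treats the Dirichlet and Neumann cases symmetrically; the paper's route yields, as a byproduct, locally \emph{uniform} convergence $u(t,\cdot)\to f$ on each ball, which is occasionally useful later. Both are standard techniques and either suffices here.

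Two minor remarks. First, your closing phrase ``the general case following by splitting $f=f^+-f^-$'' is superfluous: the monotonicity claim in~(ii) is stated only for $f\ge 0$, and the convergence of $G_n^D(\cdot,s)f$ for general $f$ was already obtained from the compactness/uniqueness argument. Second, when you say ``$C$ and $C_\delta$ are chosen large in terms of $\|f\|_\infty$ and of the local bounds on the coefficients'', it is worth making explicit that $\tau$ must first be taken small (relative to the local bound on $|c|$) so that the potential term does not spoil the super/subsolution inequality; otherwise the dependence of $C$ on $C\tau$ becomes circular.
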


\begin{proof}
Let us prove the first part of the statement and property (i). The uniqueness of the solution to problem \eqref{NonA} and
estimate \eqref{stima-sol} follow from Proposition \ref{primax}. Let us now prove that, for any $f\in C_b(\Rd)$, $G_n^N(\cdot,s)f$ converges, up to a subsequence, to
a solution to problem \eqref{NonA} which satisfies the properties in the statement of the theorem.
For this purpose we fix $f\in C_b(\R^d)$. The Schauder estimates in \cite[Thms. IV.5.3, IV.10.1]{LadSolUra68Lin}
show that the sequence
$\|G_n^N(\cdot,s)f\|_{C^{1+\alpha/2,2+\alpha}(K)}$ is bounded,
for any compact set $K\subset (s,T)\times\R^d$, by a constant
independent of $n$. Arzel\`a-Ascoli theorem, the arbitrariness of $K$ and a diagonal argument allow to conclude that there exists
a subsequence $(G_{n_k}^N(\cdot,s)f)$ which converges to a function $u\in C^{1+\alpha/2,2+\alpha}_{\rm loc}((s,+\infty)\times\Rd)$ in $C^{1,2}(D)$,
for any compact set $D\subset (s,+\infty)\times\R^d$. Clearly, $u$ satisfies the differential equation
in \eqref{NonA}.
Hence, to prove that $u$ solves problem \eqref{NonA} we just need to show that $u$ is continuous at $t=s$ and it therein equals the
function $f$. As a byproduct we also then deduce that the whole sequence $(G_n^N(\cdot,s)f)$ converges to $u$
in $C^{1,2}(D)$ for any compact set $D\subset (s,+\infty)\times\Rd$, since our arguments show that any subsequence of
$(G_n^N(\cdot,s)f)$ has a subsequence converging to $u$ in $C^{1,2}(D)$ for any $D$ as above.

Let us first suppose that $f$ belongs to $C^{2+\alpha}_c(\Rd)$. In this case we can estimate
$\|G_n^N(\cdot,s)f\|_{C^{1+\alpha/2,2+\alpha}(D)}$ from above by a constant, which is independent of $n$, for any compact set
$D\subset [s,+\infty)\times\R^d$ and any $n\in\mathbb N$ such that ${\rm supp}(f)\subset B(0,n)$.
Hence, $G_{n_k}^N(\cdot,s)f$ converges to $u$ uniformly in $D$ and, as a byproduct, $u$ is continuous up to
$t=s$ and is a solution to problem \eqref{NonA}.

Let us now assume that $f\in C_0(\Rd)$ and let $(f_m)\subset C^{2+\alpha}_c(\Rd)$ converge to $f$ uniformly
in $\Rd$. Then, using the classical maximum principle, which shows that $\|G^N_n(t,s)g\|_{\infty}\le e^{-c_0(t-s)}\|g\|_{\infty}$ for
any $g\in C(\overline{B(0,n)})$ and any $n\in\mathbb N$, we can estimate
\begin{align*}
|(G_{n_k}^N(t,s)f)(x)-f(x)|\le &|(G_{n_k}^N(t,s)f)(x)-(G_{n_k}^N(t,s)f_m)(x)|\\
&+|(G_{n_k}^N(t,s)f_m)(x)-f_m(x)|+|f_m(x)-f(x)|\\
\le &(e^{-c_0(t-s)}+1)\|f-f_m\|_{\infty}+|(G_{n_k}^N(t,s)f_m)(x)-f_m(x)|,
\end{align*}
for any $t>s$ and any $x\in\Rd$.
Letting $k\to +\infty$ yields
\begin{align*}
|(u(t,x)-f(x)|\le (e^{-c_0(t-s)}+1)\|f-f_m\|_{\infty}+|u_{f_m}(t,x)-f_m(x)|
\end{align*}
for any $(t,x)\in (s,+\infty)\times\Rd$ and any $m\in\mathbb N$, which clearly implies that
$u(t,\cdot)$ tends to $f$ as $t\to s^+$, locally uniformly in $\Rd$.

To conclude, let us consider the case when $f$ is merely bounded and continuous in $\R^d$.
Fix $R>0$ and let $\eta\in C^{2+\alpha}_c(\Rd)$ satisfy $\eta\equiv 1$ in $B(0,R)$ and $0\le\eta\le 1$ in $\Rd$. Further, let $(f_n)\subset C_c^{2+\alpha}(\R^d)$ be a bounded sequence with respect to the sup-norm converging to $f$ locally uniformly in $\R^d$, and set $M=\sup_{n\in \N}\|f_n\|_\infty$. Note that
\begin{eqnarray*}
|G_n^N(t,s)((\one-\eta)f_n)|\le \|f_n\|_{\infty}G_n^N(t,s)(\one-\eta)\le M(e^{-c_0(t-s)}-G_n^N(t,s)\eta),
\end{eqnarray*}
for any $s<t$ and any $n\in\mathbb N$,
as it follows immediately from the positivity of each operator $G_n^N(t,s)$.
Hence, we can estimate
\begin{align*}
&|G_{n_k}^N(t,s)f(x)-f(x)|\\
\le & |G_{n_k}^N(t,s)(f-f_n)(x)|
+|G_{n_k}^N(t,s)(f_n(1-\eta))(x)|\\
&+|G_{n_k}^N(t,s)(f_n\eta)(x)-(f_n\eta)(x)|\\
\le & e^{-c_0(t-s)}\|f-f_n\|_{L^{\infty}(B(0,n_k))}+M\left (e^{-c_0(t-s)}-(G_{n_k}^N(t,s)\eta)(x)\right )\\
&+|G_{n_k}^N(t,s)(f_n\eta)(x)-(f_n\eta)(x)|,
\end{align*}
for any $x\in B(0,R)$ and any $k$ such that $n_k>R$. Letting first $n\to +\infty$ and then $k\to +\infty$ we get
\begin{align*}
|u(t,x)-f(x)|\le
M\left (e^{-c_0(t-s)}-u_{\eta}(t,x)\right )+|u_{f\eta}(t,x)-(f\eta)(x)|,
\end{align*}
for any $x\in B(0,R)$. Letting $t\to s^+$, we see that $u(t,\cdot)\to f$, uniformly in $B(0,R)$.

The proof of property (ii) follows the same lines of the proof of property (i). Hence, we skip the details. We just observe that
the pointwise convergence of the sequence $(G_n^D(t,s)f)$ can also be proved applying the classical maximum principle to the function $G_m^D(\cdot,s)f-G_n^D(\cdot,s)f$ ($n,m\in\N$, $m>n$), which shows that, if $f\ge 0$, then
$G_m^D(\cdot,s)f-G_n^D(\cdot,s)f\ge 0$ in $[s,+\infty)\times B(0,n)$.
\end{proof}

\section{Basic properties of the operator $G(t,s)$}
\label{sect-3}
Let us now prove some properties of the operator $G(t,s)$. For this purpose, we set $G(t,t):=id_{C_b(\R^d)}$.

\begin{prop}[Green kernel]\label{greenkernel}
The following properties are satisfied.
\begin{enumerate}[\rm (i)]
\item
The family of operators $G(t,s)$ $(t,s\in I$, $s<t)$ defines an evolution operator on $C_b(\R^d)$.
\item
The evolution operator $(G(t,s))$ can be represented in the form
\begin{equation}
(G(t,s)f)(x)=\int_{\R^d}g(t,s,x,y)f(y)dy,\qquad\;\,s<t,\;\,x\in\R^d,
\label{repres-formula}
\end{equation}
for any $f \in C_b(\R^d)$, where $g:\Lambda\times\R^d\times\R^d\to\R$ is a positive function.
For any $s\in I$ and almost any $y\in \R^d$, $g(\cdot,s,\cdot,y)$ belongs to $C^{1+\alpha/2,2+\alpha}_{\rm loc}
((s,+\infty)\times \R^d)$ and solves the equation
$D_t g -\mathcal{A}(t)g=0$ in $(s,+\infty)\times \R^d$. Moreover,
\begin{equation}\label{1_est}
\|g(t,s,x,\cdot)\|_{L^1(\R^d)}\le e^{-c_0(t-s)},\qquad\;\,s<t,\;\,x\in\R^d.
\end{equation}
The function $g$ is called the Green function of $D_t u-\mathcal{A}(t)u=0$ in $(s,+\infty)\times \R^d$.
\item
$G(t,s)$ can be extended to $B_b(\Rd)$ through formula
\eqref{repres-formula}. Each operator $G(t,s)$ is irreducible and has the strong Feller property.
\end{enumerate}
\end{prop}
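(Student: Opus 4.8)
The plan is to construct the Green function $g$ from the Dirichlet approximations of Theorem~\ref{thm-1.3} and to read off (i)--(iii) from its properties. By classical parabolic theory, on each bounded cylinder $(s,+\infty)\times B(0,n)$ (on which the coefficients of $\mathcal{A}(t)$ are bounded, locally H\"older continuous and uniformly elliptic) there is a nonnegative Green function $g_n$ for problem~\eqref{pb-approx-Dirichlet}, so that $(G_n^D(t,s)f)(x)=\int_{B(0,n)}g_n(t,s,x,y)f(y)\,dy$ for $f\in C(\overline{B(0,n)})$; for almost every $y$, $g_n(\cdot,s,\cdot,y)\in C^{1+\alpha/2,2+\alpha}_{\rm loc}((s,+\infty)\times B(0,n))$ and solves $D_tg-\mathcal{A}(t)g=0$, and $g_n(t,s,x,y)>0$ for $x,y\in B(0,n)$, $t>s$, by the strong maximum principle. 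By Theorem~\ref{thm-1.3}(ii) the sequence $(G_n^D(t,s)f)_n$ is nondecreasing for $0\le f\in C_b(\Rd)$, and comparing the corresponding integral representations yields $g_n(t,s,x,\cdot)\le g_{n+1}(t,s,x,\cdot)$ a.e.\ on $B(0,n)$ whenever $x\in B(0,n)$. I would then define $g(t,s,x,y):=\lim_{n\to\infty}g_n(t,s,x,y)$ (a nondecreasing limit, taken over $n>|x|\vee|y|$), obtain the representation formula~\eqref{repres-formula} for $0\le f\in C_b(\Rd)$ by monotone convergence together with Theorem~\ref{thm-1.3}(ii), and for arbitrary $f\in C_b(\Rd)$ by linearity. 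Positivity of $g$ is inherited from $g\ge g_n>0$; taking $f\equiv\one$ in~\eqref{repres-formula} and using the classical bound $G_n^D(t,s)\one\le e^{-c_0(t-s)}$ together with Fatou's lemma gives the $L^1$-estimate~\eqref{1_est}. Since $\one\in C_b(\Rd)$, Theorem~\ref{thm-1.3} moreover shows that $x\mapsto\int_{\Rd}g(t,s,x,y)\,dy=(G(t,s)\one)(x)$ is continuous; I record this for later.

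The analytic core is the regularity of $g$ in $(t,x)$ and the fact that $g$ (hence each $G(t,s)f$) solves the equation, because the approximating Green functions must be controlled by bounds that are \emph{uniform in $n$} and, simultaneously, \emph{integrable in $y$}. Fix a compact $K\subset(s,+\infty)\times\Rd$; once $B(0,n)\supset K$, the ellipticity constant and the local sup-norms of the coefficients near $K$ are independent of $n$, so a parabolic local maximum principle for nonnegative solutions (of Krylov--Safonov / De Giorgi--Nash--Moser type) bounds $\sup_K g_n(\cdot,s,\cdot,y)$ by $C_K$ times the mean of $g_n(\cdot,s,\cdot,y)$ over a suitable sub-cylinder $Q\supset K$ of $(s,+\infty)\times B(0,n)$. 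Integrating this in $y$ and using $\int_{\Rd}g_n(t,s,x,y)\,dy\le e^{-c_0(t-s)}$ gives $\int_{\Rd}\sup_K g_n(\cdot,s,\cdot,y)\,dy\le C_K'$ uniformly in $n$, so $\phi_K(y):=\sup_K g(\cdot,s,\cdot,y)$ belongs to $L^1(\Rd)$ and $g_n(\cdot,s,\cdot,y)\le\phi_K(y)$ on $K$ for all $n$. Interior Schauder estimates then bound $\|g_n(\cdot,s,\cdot,y)\|_{C^{1+\alpha/2,2+\alpha}(K)}$ by $C_K'\,\phi_{K'}(y)$ for a slightly larger compact $K'$; Arzel\`a--Ascoli, a diagonal argument and the pointwise convergence already established show that $g(\cdot,s,\cdot,y)\in C^{1+\alpha/2,2+\alpha}_{\rm loc}((s,+\infty)\times\Rd)$ and solves $D_tg-\mathcal{A}(t)g=0$ for a.e.\ $y$. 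The integrability of $y\mapsto\|g(\cdot,s,\cdot,y)\|_{C^{1+\alpha/2,2+\alpha}(K)}$ then justifies differentiating~\eqref{repres-formula} under the integral sign and proves that each $G(t,s)f$ solves the same equation. This finishes~(ii).

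For~(i): $G(t,t)=\mathrm{id}$ holds by definition, and the evolution law $G(t,s)G(s,r)=G(t,r)$ for $r\le s\le t$ follows by passing to the limit in the identity $G_n^D(t,s)G_n^D(s,r)=G_n^D(t,r)$, which is itself a consequence of uniqueness for the Cauchy--Dirichlet problem on $(s,+\infty)\times B(0,n)$. Concretely, for $0\le f\in C_b(\Rd)$ put $u_n:=G_n^D(s,r)f$, extended by $0$ outside $B(0,n)$, so that $0\le u_n\uparrow G(s,r)f$ pointwise; then, for each $x$,
\[
(G_n^D(t,r)f)(x)=(G_n^D(t,s)u_n)(x)=\int_{\Rd}g_n(t,s,x,y)\,u_n(y)\,dy .
\]
By monotone convergence the right-hand side tends to $\int_{\Rd}g(t,s,x,y)(G(s,r)f)(y)\,dy=(G(t,s)G(s,r)f)(x)$ (here~\eqref{repres-formula} is applied to the bounded continuous function $G(s,r)f$), while by Theorem~\ref{thm-1.3}(ii) the left-hand side tends to $(G(t,r)f)(x)$; linearity extends the identity to all $f\in C_b(\Rd)$.

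For~(iii): \eqref{1_est} shows that~\eqref{repres-formula} defines each $G(t,s)$ on $B_b(\Rd)$ as a linear operator with $\|G(t,s)\|_{{\mathcal L}(B_b(\Rd))}\le e^{-c_0(t-s)}$, and irreducibility is immediate from $g>0$: if $U$ is a Borel set of positive Lebesgue measure (in particular, a nonempty open set), then $(G(t,s)\chi_U)(x)=\int_U g(t,s,x,y)\,dy>0$ for every $x$ and every $s<t$. For the strong Feller property I would fix $t>s$, $f\in B_b(\Rd)$ and a sequence $x_k\to x$ in $\Rd$: by the $(t,x)$-regularity from~(ii) we have $g(t,s,x_k,y)\to g(t,s,x,y)$ for a.e.\ $y$, while the continuity of $G(t,s)\one$ recorded above gives $\int_{\Rd}g(t,s,x_k,y)\,dy\to\int_{\Rd}g(t,s,x,y)\,dy$; since all these functions are nonnegative and lie in $L^1(\Rd)$, Scheff\'e's lemma yields $\|g(t,s,x_k,\cdot)-g(t,s,x,\cdot)\|_{L^1(\Rd)}\to0$, whence $|(G(t,s)f)(x_k)-(G(t,s)f)(x)|\le\|f\|_\infty\|g(t,s,x_k,\cdot)-g(t,s,x,\cdot)\|_{L^1(\Rd)}\to0$ and $G(t,s)f\in C_b(\Rd)$. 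The main obstacle, in my view, is the estimate in the second paragraph --- the $n$-uniform, $y$-integrable local bounds $\phi_K$ on the Dirichlet Green functions --- because this single estimate simultaneously delivers the smoothness of $g$, the fact that $g$ and each $G(t,s)f$ solve the equation, and the $L^1(\Rd)$-continuity of $x\mapsto g(t,s,x,\cdot)$ underlying the strong Feller property.
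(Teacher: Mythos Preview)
Your proof is correct; the construction of $g$ as the monotone limit of the Dirichlet Green functions matches the paper, but the three parts diverge in the details. For (i) the paper is shorter: since $G(\cdot,r)f$ restricted to $[s,+\infty)$ is itself a bounded classical solution of the Cauchy problem with datum $G(s,r)f$, uniqueness in Theorem~\ref{thm-1.3} gives $G(t,r)f=G(t,s)G(s,r)f$ in one line, without going back through the approximations. For the regularity of $g$ in (ii), both arguments ultimately rest on a parabolic Harnack inequality (the paper invokes Sturm's version, after freezing the coefficients outside a compact set), but the paper applies it \emph{pointwise} in $y$: fixing countable dense sets $(t_h)$, $(x_k)$ and the full-measure set of $y$ with $g(t_h,s,x_k,y)<\infty$, Harnack bounds each $g_n(\cdot,s,\cdot,y)$ on compacta uniformly in $n$, and Schauder plus Arzel\`a--Ascoli finish. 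Your $L^\infty$--$L^1$ estimate combined with Fubini is a neat repackaging that yields the extra information $y\mapsto\sup_Kg(\cdot,s,\cdot,y)\in L^1(\Rd)$; note however that the differentiation under the integral sign is unnecessary, since $G(t,s)f$ already solves the equation by Theorem~\ref{thm-1.3}. For the strong Feller property in (iii), the paper approximates $f\in B_b(\Rd)$ pointwise by a bounded sequence $(f_n)\subset C_b(\Rd)$, observes that interior Schauder estimates bound $(G(t,s)f_n)$ in $C^{2+\alpha}_{\rm loc}(\Rd)$, and passes to the limit via Arzel\`a--Ascoli; this gives the stronger conclusion $G(t,s)f\in C^{2+\alpha}_{\rm loc}(\Rd)$, whereas your Scheff\'e argument yields only continuity but, as a by-product, the $L^1(\Rd)$-continuity of $x\mapsto g(t,s,x,\cdot)$.
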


\begin{proof}
(i). It follows from the uniqueness of the solution to problem \eqref{NonA}. Indeed, for any $r<s$ and any $f\in C_b(\R^d)$, the function
$G(\cdot,r)f$ belongs to $C([s,+\infty)\times\R^d)\cap C^{1+\alpha/2,2+\alpha}_{\rm loc}((s,+\infty)\times\R^d)$, is bounded in $[s,T]\times\Rd$ for any
$T>s$ and solves the Cauchy problem
\begin{eqnarray*}
\displaystyle{
\left\{
\begin{array}{ll}
D_tu(t,x)=\mathcal{A}(t)u(t,x),\quad &(t,x)\in (s,+\infty)\times \R^d,\\[1mm]
u(s,x)=(G(s,r)f)(x), & x\in \Rd.
\end{array}\right.
}
\end{eqnarray*}
Hence, by uniqueness, $G(t,r)f=G(t,s)G(s,r)f$ for any $t>s$.

(ii). By \cite[Theorem 3.7.16]{Fri64Par} we know that for every $n \in \N$ there exists a unique Green function $g_n$ of the Cauchy-Dirichlet
problem \eqref{pb-approx-Dirichlet} in $(s,+\infty)\times B(0,n)$, i.e., a unique function $g_n$ such that
\begin{equation*}
(G_n^D(t,s)f)(x)=\int_{B(0,n)}g_n(t,s,x,y)f(y)dy,\qquad\;t>s,\;\,x\in B(0,n),
\end{equation*}
for any $f \in C(\overline{B(0,n)})$.
The function $g_n$ is positive and, as a function of $(t,x)$, it belongs to
$C^{1+\alpha/2,2+\alpha}((\tau,T)\times B(0,n))$ for every fixed $y\in B(0,n)$, $s\in I$ and $s<\tau<T$. Moreover, it satisfies
$D_t g_n-\mathcal{A}(t)g_n=0$ in $(s,+\infty)\times B(0,n)$. By Theorem \ref{thm-1.3}(ii),
for any nonnegative $f\in C_b(\R^d)$, the sequence $((G_n^D(t,s)f)(x))$ increases to $(G(t,s)f)(x)$. As a byproduct, the functions $g_n$
increase with $n$. Therefore, defining
\begin{eqnarray*}
g(t,s,x,y)=\lim_{n\to +\infty}g_n(t,s,x,y),\quad\,(t,s,x,y)\in \Lambda\times\R^d\times\R^d,
\end{eqnarray*}
by monotone convergence we get that
\begin{align*}
(G(t,s)f)(x)&=\lim_{n \to +\infty}(G_n^D(t,s)f)(x)=\int_{\R^d} g(t,s,x,y)f(y)\,dy,
\end{align*}
for any $f\ge 0$. For a general $f\in C_b(\Rd)$ it suffices to split $f=f^+-f^-$ and to apply the above argument to $f^+$ and $f^-$.
This shows that \eqref{repres-formula} holds.
The positivity of $g$ is obvious since each function $g_n$ is positive in $\Lambda\times B(0,n)\times B(0,n)$.
By \eqref{stima-sol} we have that
$$\int_{\R^d}g(t,s,x,y)\,dy=(G(t,s)\one)(x)\leq e^{-c_0(t-s)},\quad\quad t\geq s,\;\, x\in \R^d,$$
and \eqref{1_est} is proved.

As far as the regularity of $g$ with respect to the variables $t,x$ is concerned, we first show that,
for every $s \in I$ and almost all $y \in \R^d$, $g_n(\cdot,s,\cdot,y)$
is locally bounded in $I\times\Rd$, uniformly with respect to $n$.
Once this property is checked, the same argument used in the proof of Theorem \ref{thm-1.3}(i) based on interior Schauder estimates and Arzel\`a-Ascoli theorem, will show that $g(\cdot,s,\cdot,y)$ belongs to $C^{1+\alpha/2,2+\alpha}_{\rm loc}(I\times\Rd)$ for every $s \in I$ and almost any $y\in\R^d$.
So, let us fix two compact sets $[\tau,T]\subset (s,+\infty)$
and $K\subset\Rd$. Further, denote by $(t_h)$ and $(x_k)$ two countable sets dense in $[\tau,T+1]$ and in $K$, respectively.
Since $\int_{\R^d}g(t_h,s,x_k,y)\,dy<+\infty$ for any $h,k\in\mathbb N$, there exists a set ${\mathcal Y}\subset\Rd$ with negligible complement such that $g(t_h,s,x_k,y)<+\infty$ for any $y \in \mathcal{Y}$ and any $h,k\in\mathbb N$.
Let $\bar{y} \in {\mathcal Y}$ and let $R$ be sufficiently large such that $s<\tau-2/R$ and $\bigcup_{x\in K}\overline{B(x,1)}\subset B(0,R)$. Moreover, let $\vartheta$ be a smooth function compactly supported in $[\tau-2/R,T+2]\times\overline{B(0,R+1)}$ such that $0\le\vartheta\le 1$ and $\vartheta(t,x)=1$ for any $(t,x)\in [\tau-1/R,T+1]\times B(0,R)$. Define the operator $\tilde {\mathcal A}$ by setting
\begin{eqnarray*}
\tilde{\mathcal A}(t)\psi(x)=\sum_{i,j=1}^d\tilde q_{ij}(t,x)D_{ij}\psi(x)+\sum_{j=1}^d\tilde b_j(t,x)D_j\psi(x)-
\tilde c(t,x)\psi(x),
\end{eqnarray*}
where
\begin{align*}
&\tilde q_{ij}(t,x)=\vartheta(t,x)q_{ij}(t,x)+(1-\vartheta(t,x))\delta_{ij},\\
&\tilde b_j(t,x)=\vartheta(t,x)b_j(t,x),\\
&\tilde c(t,x)=\vartheta(t,x)c(t,x),
\end{align*}
for any $(t,x)\in \R^{d+1}$ and any $i,j=1,\ldots,d$. Since the function $g_n(\cdot,s,\cdot,y)$ satisfies the equation
$D_tg_n(\cdot,s,\cdot,y)-\tilde {\mathcal A}g_n(\cdot,s,\cdot,y)=0$ in $[\tau-1/R,T+1]\times B(0,R)$, for any $n>R$,
applying the Harnack inequality in \cite[Theorem 1]{Stu94Har},
we see that, if $\rho^2< 1\wedge 1/R$,
then there exists a positive constant $M_0$, independent of $h$, $k$ and $n$,  such that
\begin{equation}\label{Harnack}
g_n(t,s,x,\bar{y})\leq M_0 g_n(t_h,s,x_k,\bar{y})\le M_0 g(t_h,s,x_k,\bar{y}),
\end{equation}
for every $t \in [t_h-\frac{3}{4}\rho^2, t_h-\frac{1}{2}\rho^2]$, $x \in \overline{B(x_k,\rho/2)}$.
Since $[\tau,T]\times K$ can be covered by a finite number of cylinders $[t_h-\frac{3}{4}\rho^2, t_h-\frac{1}{2}\rho^2]\times B(x_k,\rho/2)$,
from \eqref{Harnack} we deduce that $g_n(\cdot,s,\cdot,\bar{y})$ is uniformly bounded in
$[\tau,T]\times K$ by a constant independent of $n$, as it has been claimed.

(iii). Clearly, the operator $G(t,s)$ can be extended to the set of all bounded Borel measurable functions
$f$ through formula \eqref{repres-formula}. To prove that $G(t,s)$ is strong Feller, we have to show that, for any
$f\in B_b(\Rd)$, $G(t,s)f$ is continuous. In fact, we will show that $G(t,s)f\in C^{2+\alpha}_{\rm loc}(\R^d)$.
For this purpose, we fix a bounded sequence $(f_n)$ of bounded and continuous functions converging pointwise to $f$ as
$n\to +\infty$. Clearly, $G(t,s)f_n$ converges to $G(t,s)f$ pointwise in $\R^d$ by dominated convergence.
Using the Schauder interior estimates, one can easily deduce that, for any $R>0$, the sequence $(G(t,s)f_n)$ is bounded in $C^{2+\alpha}(B(0,R))$.
Hence, Arzel\`a-Ascoli theorem implies that $G(t,s)f_n$ converges in $C^{2}(B(0,R))$ to $G(t,s)f$ and $G(t,s)f$ belongs to $C^{2+\alpha}(B(0,R))$.
This completes the proof.
\end{proof}


The following corollary is an immediate consequence of Proposition \ref{greenkernel}. Hence, we skip the proof.

\begin{coro}
For every $(t,s)\in \Lambda$ and every $x \in \R^d$ let us
define the measure $g_{t,s}(x,dy)$ by setting $g_{t,t}(x,dy)=\delta_x$ and
\begin{equation}\label{kernel}
g_{t,s}(x,A)=\int_A g(t,s,x,y)\,dy,\qquad\;\,t>s,
\end{equation}
for any Borel set $A\subset\Rd$. Then, each measure $g_{t,s}(x,dy)$ is equivalent to the Lebesgue measure $($i.e., it
has the same sets with zero measure as the restriction of the Lebesgue measure to the $\sigma$-algebra of
all the Borel sets of $\Rd)$. Moreover, for any $t\geq r\geq s$, $x\in \R^d$ and any Borel set $A\subset\R^d$ it holds that
\begin{eqnarray*}
g_{t,s}(x,A)=\int_{\R^d}g_{r,s}(y,A)g_{t,r}(x,dy).
\end{eqnarray*}
\end{coro}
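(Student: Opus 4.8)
The plan is to derive both assertions directly from Proposition~\ref{greenkernel}, handling the equivalence of measures and the Chapman--Kolmogorov identity in turn.

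For the equivalence with the Lebesgue measure (understood for $t>s$; for $t=s$ the measure is the Dirac mass $\delta_x$, which enters only into the identity below), I would observe that, by \eqref{kernel}, $g_{t,s}(x,\cdot)$ is the integral of the nonnegative density $g(t,s,x,\cdot)$ against the Lebesgue measure, hence automatically absolutely continuous with respect to it; conversely, if a Borel set $A$ has positive Lebesgue measure, then, since $g(t,s,x,\cdot)$ is strictly positive by Proposition~\ref{greenkernel}(ii), one gets $g_{t,s}(x,A)=\int_A g(t,s,x,y)\,dy>0$. Thus the two measures have exactly the same null sets.

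For the Chapman--Kolmogorov identity, fix $t\ge r\ge s$, $x\in\R^d$ and a Borel set $A\subset\R^d$. I would first dispose of the two endpoint cases: if $t=r$ the right-hand side is $\int_{\R^d}g_{r,s}(y,A)\,\delta_x(dy)=g_{r,s}(x,A)$, and if $r=s$ it equals $\int_{\R^d}\delta_y(A)\,g_{t,s}(x,dy)=g_{t,s}(x,A)$, both coinciding with the left-hand side. For $t>r>s$ the main step is the pointwise kernel identity
\begin{equation*}
g(t,s,x,y)=\int_{\R^d}g(t,r,x,z)\,g(r,s,z,y)\,dz\qquad\text{for a.e. }y\in\R^d,
\end{equation*}
which I would obtain from the evolution law $G(t,s)=G(t,r)G(r,s)$ of Proposition~\ref{greenkernel}(i): testing this operator identity against an arbitrary $\varphi\in C_c(\R^d)$, rewriting both sides through \eqref{repres-formula}, and interchanging the order of integration by Fubini's theorem (legitimate since the double integral converges absolutely, by the positivity of $g$ and \eqref{1_est}, which also yields that both sides above are in $L^1(\R^d)$ as functions of $y$), one finds that $g(t,s,x,\cdot)$ and $y\mapsto\int_{\R^d}g(t,r,x,z)g(r,s,z,y)\,dz$ have the same pairing with every $\varphi\in C_c(\R^d)$, hence agree almost everywhere. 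Integrating the displayed identity over $A$ and swapping integrals once more then gives
\begin{align*}
g_{t,s}(x,A)=\int_A g(t,s,x,y)\,dy
&=\int_{\R^d}g(t,r,x,z)\Big(\int_A g(r,s,z,y)\,dy\Big)\,dz\\
&=\int_{\R^d}g_{r,s}(z,A)\,g_{t,r}(x,dz),
\end{align*}
which is the assertion.

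I do not foresee a genuine obstacle here, which is why the authors omit the proof; the only point deserving a little care is that the almost-everywhere exceptional set appearing in the kernel identity is harmless, since it is wiped out upon integration over $A$. As an alternative that avoids the exceptional set altogether, I would keep in mind extending the evolution law $G(t,s)=G(t,r)G(r,s)$ from $C_b(\R^d)$ to all of $B_b(\R^d)$ by a functional monotone class argument — the class of admissible bounded Borel functions being stable under bounded increasing limits by monotone convergence, and $G(r,s)\chi_A$ being continuous and bounded by the strong Feller property of Proposition~\ref{greenkernel}(iii) — and then simply evaluating the resulting identity at $\chi_A$.
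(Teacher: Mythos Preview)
Your proposal is correct and is precisely the argument the paper has in mind: the authors omit the proof entirely, stating only that the corollary is an immediate consequence of Proposition~\ref{greenkernel}, and your write-up simply supplies the routine details (positivity of the density for the equivalence of measures, and the evolution law $G(t,s)=G(t,r)G(r,s)$ unpacked via \eqref{repres-formula} and Fubini for the Chapman--Kolmogorov identity). Both your main route through the a.e.\ kernel identity and your alternative via the strong Feller property are valid and standard.
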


The following lemma besides showing some continuity properties of the function $s\mapsto (G(t,s)f)(x)$ will be the key tool
to prove the compactness of the operator $G(t,s)$ in Theorem \ref{thm-comp}. Let us consider the
following additional hypothesis.
\begin{hyp}\label{hyp1-bis}
For every bounded interval $J\subset I$ there exist a positive function
$\varphi=\varphi_J\in C^2(\R^d)$ and a real number
$\lambda=\lambda_J$ such that
\begin{eqnarray*}
\lim_{|x|\to +\infty}\varphi(x)=+\infty \quad\textrm{and}\quad
(\mathcal{A}_{-c}(t)\varphi)(x)-\lambda\varphi(x)\leq 0,\quad (t,x)\in
J\times \R^d,
\end{eqnarray*}
where $\mathcal{A}_{-c}=\mathcal{A}+c I$.
\end{hyp}

\begin{lemm}\label{lemma_luca_luciana}
The following properties hold true:
\begin{enumerate}[\rm (i)]
\item
Suppose that $f\in C^2_c(\R^d)$. Then,
\begin{equation}
(G(t,s_1)f)(x)-(G(t,s_0)f)(x)=-\int_{s_0}^{s_1}(G(t,\sigma)\mathcal{A}(\sigma)f)(x)d\sigma,
\label{form-fond}
\end{equation}
for any $s_0\le s_1\le t$ and any $x\in\R^d$.
In particular, the function $(G(t,\cdot)f)(x)$ is differentiable in
$I_t$ for any $x\in\R^d$ and
\begin{eqnarray*}
\frac{\partial}{\partial s}(G(t,s)f)(x)=-(G(t,s)\mathcal{A}(s)f)(x).
\end{eqnarray*}
\item
Let $c\ge 0$, $f\in C^2_b(\R^d)$ be constant and positive outside a ball and
assume Hypothesis $\ref{hyp1-bis}$. Then, for any $x\in\R^d$, the
function $(G(t,\cdot)\mathcal{A}(\cdot)f)(x)$ is locally integrable in
$I_t$ and
\begin{eqnarray*}
(G(t,s_1)f)(x)-(G(t,s_0)f)(x)\ge-\int_{s_0}^{s_1}(G(t,\sigma)\mathcal{A}(\sigma)f)(x)d\sigma,
\end{eqnarray*}
for any $s_0\le s_1\le t$.
\end{enumerate}
\end{lemm}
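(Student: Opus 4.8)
The plan for part (i) is to prove \eqref{form-fond} first for the approximating operators $G_n^D(t,s)$ of problem \eqref{pb-approx-Dirichlet}, where everything lives over the compact set $\overline{B(0,n)}$ and is classical, and then to let $n\to+\infty$. Fix $f\in C^2_c(\R^d)$ and $n$ so large that $\supp f\subset B(0,n)$. Since the Green function $g_n$ of \eqref{pb-approx-Dirichlet} satisfies, in its backward variables $(\sigma,y)$, the adjoint equation $D_\sigma g_n+\mathcal{A}(\sigma)_y^{*}g_n=0$ with homogeneous Dirichlet data (see \cite{Fri64Par}), differentiating under the integral sign in $(G_n^D(t,\sigma)f)(x)=\int_{B(0,n)}g_n(t,\sigma,x,y)f(y)\,dy$ and integrating by parts (boundary terms vanish as $f$ is compactly supported in $B(0,n)$) shows that $\sigma\mapsto(G_n^D(t,\sigma)f)(x)$ is of class $C^1$ with derivative $-(G_n^D(t,\sigma)\mathcal{A}(\sigma)f)(x)$, so that, by the fundamental theorem of calculus,
\[
(G_n^D(t,s_1)f)(x)-(G_n^D(t,s_0)f)(x)=-\int_{s_0}^{s_1}(G_n^D(t,\sigma)\mathcal{A}(\sigma)f)(x)\,d\sigma .
\]
Letting $n\to+\infty$, the left-hand side converges by Theorem \ref{thm-1.3}(ii), while the integrand on the right converges pointwise in $\sigma$ to $(G(t,\sigma)\mathcal{A}(\sigma)f)(x)$ --- apply monotone convergence to $(\mathcal{A}(\sigma)f)^{\pm}$, which are bounded with support contained in $\supp f$, together with $g_n\uparrow g$ --- and is bounded, uniformly in $n$ and in $\sigma\in[s_0,s_1]$, by $e^{-c_0(t-\sigma)}\|\mathcal{A}(\sigma)f\|_\infty$; dominated convergence then gives \eqref{form-fond}. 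Finally, $\sigma\mapsto\mathcal{A}(\sigma)f$ is continuous with values in $C_b(\R^d)$ by Hypothesis \ref{hyp1}(i), and $\sigma\mapsto G(t,\sigma)h$ is continuous with values in $C_b(\R^d)$ for $h\in C_0(\R^d)$, so the integrand in \eqref{form-fond} is continuous and \eqref{form-fond} upgrades to the asserted differentiability. The only mildly technical point here is the regularity in $\sigma$ of $g_n$, which is classical on a bounded cylinder.

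For part (ii), I would write $f=g+\kappa\one$, where $\kappa>0$ is the positive constant value of $f$ outside a ball $B(0,\rho)$, so that $g:=f-\kappa\in C^2_c(\R^d)$; by \eqref{repres-formula}, $G(t,s)f=G(t,s)g+\kappa\,G(t,s)\one$, and $\mathcal{A}(\sigma)f=\mathcal{A}(\sigma)g-\kappa\,c$ since $\mathcal{A}(\sigma)\one=-c$. Applying part (i) to $g$, the whole statement of (ii) follows once one proves
\[
(G(t,s_1)\one)(x)-(G(t,s_0)\one)(x)\ge\int_{s_0}^{s_1}(G(t,\sigma)c)(x)\,d\sigma,\qquad s_0\le s_1\le t .
\]
Indeed, by \eqref{1_est} this bounds $\int_{s_0}^{s_1}(G(t,\sigma)c)(x)\,d\sigma$ by $e^{-c_0(t-s_1)}$, so $\sigma\mapsto(G(t,\sigma)c)(x)$ is locally integrable in $I_t$, and hence so is $\sigma\mapsto(G(t,\sigma)\mathcal{A}(\sigma)f)(x)=(G(t,\sigma)\mathcal{A}(\sigma)g)(x)-\kappa(G(t,\sigma)c)(x)$; combining the identity of part (i) for $g$ with the displayed inequality then yields the integral inequality for $f$.

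To prove the reduced inequality I would use Hypothesis \ref{hyp1-bis}. Fix a bounded interval $J\supset[s_0,t]$ and let $\varphi=\varphi_J$, $\lambda=\lambda_J$ be as there; replacing $\lambda$ by $\lambda\vee0$ we may take $\lambda\ge0$, and then, since $c\ge0$, $\mathcal{A}(\sigma)\varphi=\mathcal{A}_{-c}(\sigma)\varphi-c\varphi\le\lambda\varphi$ on $J\times\R^d$. Comparing, on each ball $B(0,m)$, the solution $G_m^D(\cdot,\sigma)(\varphi|_{\overline{B(0,m)}})$ with the bounded supersolution $(\tau,y)\mapsto e^{\lambda(\tau-\sigma)}\varphi(y)$ (which dominates it on the parabolic boundary) via the classical maximum principle, and letting $m\to+\infty$ using $g_m\uparrow g$, one obtains the a priori bound $(G(t,\sigma)\varphi)(x)\le e^{\lambda(t-\sigma)}\varphi(x)<+\infty$ for $\sigma\le t$. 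Now approximate $\one$ from below by $\zeta_n:=\Theta(\varphi/n)\in C^2_c(\R^d)$, with $\Theta\in C^\infty([0,+\infty))$, $\Theta\equiv1$ on $[0,1]$, $\Theta\equiv0$ on $[2,+\infty)$, $0\le\Theta\le1$ and $\Theta'\le0$, so that $\zeta_n\uparrow\one$. Applying part (i) to $\zeta_n$ and writing $\mathcal{A}(\sigma)\zeta_n=\mathcal{A}_{-c}(\sigma)\zeta_n-c\zeta_n$,
\[
(G(t,s_1)\zeta_n)(x)-(G(t,s_0)\zeta_n)(x)=\int_{s_0}^{s_1}(G(t,\sigma)(c\zeta_n))(x)\,d\sigma-\int_{s_0}^{s_1}(G(t,\sigma)\mathcal{A}_{-c}(\sigma)\zeta_n)(x)\,d\sigma ;
\]
as $n\to+\infty$ the left-hand side tends to $(G(t,s_1)\one)(x)-(G(t,s_0)\one)(x)$ and the first integral on the right increases to $\int_{s_0}^{s_1}(G(t,\sigma)c)(x)\,d\sigma$ by monotone convergence, so the reduced inequality follows provided $\liminf_{n\to+\infty}\int_{s_0}^{s_1}(G(t,\sigma)\mathcal{A}_{-c}(\sigma)\zeta_n)(x)\,d\sigma\le0$.

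This last step is the hard part. Here $\mathcal{A}_{-c}(\sigma)\zeta_n=n^{-1}\Theta'(\varphi/n)\mathcal{A}_{-c}(\sigma)\varphi+n^{-2}\Theta''(\varphi/n)\langle Q(\sigma,\cdot)\nabla\varphi,\nabla\varphi\rangle$ is supported in the ``far'' region $\{n\le\varphi\le2n\}$: the second term is $\le0$ wherever $\Theta''\le0$, since $\langle Q(\sigma,\cdot)\nabla\varphi,\nabla\varphi\rangle\ge\eta_0|\nabla\varphi|^2\ge0$, and part of the first term is controlled by the sign of $\Theta'$ together with the Lyapunov inequality $\mathcal{A}_{-c}(\sigma)\varphi\le\lambda\varphi$; the remaining, genuinely problematic contributions must be shown to vanish in the limit by combining the Lyapunov inequality with the Chebyshev-type bound $\int_{\{\varphi\ge n\}}g(t,\sigma,x,y)\,dy\le n^{-1}(G(t,\sigma)\varphi)(x)\le n^{-1}e^{\lambda(t-\sigma)}\varphi(x)$ provided by the a priori bound above, possibly with a more careful choice of the cut-offs $\zeta_n$ so that $\mathcal{A}_{-c}(\sigma)\varphi$ and $\langle Q(\sigma,\cdot)\nabla\varphi,\nabla\varphi\rangle$ over $\{n\le\varphi\le2n\}$ are suitably controlled. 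Carrying this estimate through is where the real work lies; everything else is routine monotone/dominated convergence bookkeeping.
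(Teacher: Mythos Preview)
Your proof of part (i) is essentially the same as the paper's: establish the identity first for the Dirichlet approximations $G_n^D$ on $B(0,n)$ (the paper simply cites \cite{Acq88Evo}, while you sketch the adjoint/fundamental-solution argument), then pass to the limit by dominated convergence since $\mathcal{A}(\sigma)f$ is bounded and compactly supported. This is fine.

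For part (ii), you correctly reduce to proving
\[
(G(t,s_1)\one)(x)-(G(t,s_0)\one)(x)\ge\int_{s_0}^{s_1}(G(t,\sigma)c(\sigma,\cdot))(x)\,d\sigma,
\]
but your route to this inequality has a genuine gap that, under the stated hypotheses, I do not see how to close. You approximate $\one$ by cut-offs $\zeta_n=\Theta(\varphi/n)$ and need
$\limsup_n\int_{s_0}^{s_1}G(t,\sigma)\mathcal{A}_{-c}(\sigma)\zeta_n\,d\sigma\le 0$. Expanding,
\[
\mathcal{A}_{-c}(\sigma)\zeta_n=n^{-1}\Theta'(\varphi/n)\,\mathcal{A}_{-c}(\sigma)\varphi+n^{-2}\Theta''(\varphi/n)\,\langle Q(\sigma,\cdot)\nabla\varphi,\nabla\varphi\rangle,
\]
and the positive parts of both terms live on $\{n\le\varphi\le 2n\}$ and involve $(\mathcal{A}_{-c}(\sigma)\varphi)^-$ and $\langle Q\nabla\varphi,\nabla\varphi\rangle$ there. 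Hypothesis~\ref{hyp1-bis} gives only an \emph{upper} bound $\mathcal{A}_{-c}(\sigma)\varphi\le\lambda\varphi$, no lower bound on $\mathcal{A}_{-c}(\sigma)\varphi$ and no bound whatsoever on $\langle Q\nabla\varphi,\nabla\varphi\rangle$; with unbounded $Q$ and $b$ these quantities can grow arbitrarily fast on $\{n\le\varphi\le 2n\}$, so the Chebyshev estimate $g_{t,\sigma}(x,\{\varphi\ge n\})\le n^{-1}e^{\lambda(t-\sigma)}\varphi(x)$ is not enough. Any $C^2$ profile $\Theta$ going from $1$ to $0$ with $\Theta'(1)=\Theta'(2)=0$ must have $\Theta''$ changing sign, so you cannot force $\Theta''\le 0$; and ``a more careful choice of cut-offs'' cannot manufacture control of $(\mathcal{A}_{-c}\varphi)^-$ or $\langle Q\nabla\varphi,\nabla\varphi\rangle$ that the hypotheses simply do not provide.

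The paper avoids this obstacle by a different idea: it truncates the \emph{potential}, not the test function. First, when $c$ is bounded and vanishes at infinity, one gets \emph{equality} (not just inequality) by working with the Neumann approximations $G_n^N$, for which $G_n^N(t,\sigma)\one$ obeys the exact formula $D_\sigma G_n^N(t,\sigma)\one=G_n^N(t,\sigma)c(\sigma,\cdot)$; passing to the limit is harmless since $c$ is bounded. For general $c\ge 0$, set $c_n=c\,\vartheta_n$ with $\vartheta_n$ a spatial cut-off, and let $G_n$ be the evolution operator of the corresponding $\mathscr{A}_n$; this is where Hypothesis~\ref{hyp1-bis} is actually used, to guarantee that each $\mathscr{A}_n$ still satisfies Hypothesis~\ref{hyp1}(iv). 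A maximum-principle comparison gives $G_n(t,\sigma)\psi\ge G(t,\sigma)\psi$ for $\psi\ge 0$ (larger potential kills more), hence
\[
(G_n(t,s_1)\one)(x)-(G_n(t,s_0)\one)(x)=\int_{s_0}^{s_1}G_n(t,\sigma)c_n(\sigma,\cdot)\,d\sigma
\ge\int_{s_0}^{s_1}G(t,\sigma)c_n(\sigma,\cdot)\,d\sigma,
\]
and Fatou's lemma (with $c_n\uparrow c$) yields the desired inequality. No estimate on $\langle Q\nabla\varphi,\nabla\varphi\rangle$ or on $(\mathcal{A}_{-c}\varphi)^-$ is ever needed.
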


\begin{proof}
(i). Let us fix $f\in C^2_c(\R^d)$ and let $n$ be sufficiently large
such that ${\rm supp}(f)\subset B(0,n)$. By \cite[Theorem 2.3(ix)]{Acq88Evo}
\begin{equation}
(G_n^D(t,s_1)f)(x)-(G_n^D(t,s_0)f)(x)=-\int_{s_0}^{s_1}(G_n^D(t,r)\mathcal{A}(r)f)(x)dr,
\label{form-fond-n}
\end{equation}
for any $s_0\le s_1\le t$ and any $x\in\R^d$,
where we recall that $(G_n^D(t,s))$ is the evolution operator associated with the Cauchy-Dirichlet problem
\eqref{pb-approx-Dirichlet}.
Since the function $(r,x)\mapsto (\mathcal{A}(r)f)(x)$ is bounded and continuous in $[s_0,s_1]\times\R^d$, taking Theorem \ref{thm-1.3}(ii)
into account, we can let $n\to +\infty$ in \eqref{form-fond-n} and obtain \eqref{form-fond}.

(ii). Since any function which is constant and positive in a
neighborhood of $\infty$ can be split into the sum of a compactly
supported function and a positive constant, due to the
above result we just need to consider the case when $f=\one$.

Being rather long, we split the proof into three steps.
To lighten the notation, throughout the proof we denote by
$\|\psi\|_{\infty,R}$ the sup-norm over the ball $B(0,R)$ of
the continuous function $\psi:\R^d\to\R$.

\emph{Step 1.} We first assume that the potential $c$
tends to $0$ as $|x|\to +\infty$, uniformly with respect to $t$
in bounded sets of $I$. As usual, let $(G_n^N(t,s))$ be the evolution operator
associated with the Cauchy-Neumann problem \eqref{pb-approx-Neumann1}.
As it is well known,
\begin{eqnarray*}
(G_n^N(t,s_2)f)(x)-(G_n^N(t,s_1)f)(x)=-\int_{s_1}^{s_2}(G_n^N(t,\tau){\mathcal
A}(\tau)f)(x)d\tau,
\end{eqnarray*}
for any $f\in C_b^2(\R^d)$ such that $\frac{\partial
f}{\partial\nu}=0$ on $\partial B(0,n)$, any $s_1,s_2\in I$ such that $s_1\le s_2\le t$ and any
$x\in B(0,n)$. In particular, taking $f=\one$ yields
\begin{equation}
(G_n^N(t,s_2)\one)(x)-(G_n^N(t,s_1)\one)(x)=\int_{s_1}^{s_2}(G_n^N(t,\tau)c(\tau,\cdot))(x)d\tau,
\label{algeri-0}
\end{equation}
for any $s_1$, $s_2$, $t$ and $x$ as above.
Theorem \ref{thm-1.3}(i) shows that
$(G_n^N(t,s_2)\one)(x)-(G_n^N(t,s_1)\one)(x)$ and $(G_n^N(t,\tau)c(\tau,\cdot))(x)$ tend to $(G(t,s_2)\one)(x)-(G(t,s_1)\one)(x)$
and $(G(t,\tau)c(\tau,\cdot))(x)$, respectively, as $n\to +\infty$. Hence,
taking the limit as $n\to +\infty$ in both the sides of \eqref{algeri-0}
yields, by dominated convergence,
\begin{equation}
(G(t,s_2)\one)(x)-(G(t,s_1)\one)(x)=\int_{s_1}^{s_2}(G(t,\tau)c(\tau,\cdot))(x)d\tau,\quad\;s_1\le
s_2<t,\;\,x\in\R^d. \label{algeri}
\end{equation}

{\em Step 2.} Let us now suppose that $c$ is unbounded.
Let us set $c_n(s,x)=c(s,x)\vartheta_n(x)$ for any $(s,x)\in I\times\Rd$,
where $\vartheta_n\in C_c(\R^d)$ satisfies $\chi_{B(0,n)}\le\vartheta_n\le\chi_{B(0,n+1)}$ for any $n\in\mathbb N$.
Clearly, each function $c_n$ is nonnegative and belongs to
$C(I;C_c(\R^d))$. Moreover,
$c_n(s,x)\le c(s,x)$ for any $(s,x)\in I\times\R^d$ and any $n\in\N$, and
the sequence $(c_n(s,x))$ is increasing for any $(s,x)\in I\times\R^d$.

By Hypothesis \ref{hyp1-bis}, each operator ${\mathcal A}_n$
satisfies the assumptions of Theorem \ref{thm-1.3}. Hence, we
can associate an evolution operator $G_n(t,s)$ with the
operator
\begin{eqnarray*}
{\mathscr
A}_n(t)\varphi(x)=\sum_{i,j=1}^dq_{ij}(t,x)D_{ij}\varphi(x)+\sum_{j=1}^db_j(t,x)D_j\varphi(x)
-c_n(t,x)\varphi(x),
\end{eqnarray*}
for any $(t,x)\in I\times\Rd$.
Note that, for any nonnegative $f\in C_b(\R^d)$ and any $m,n\in\mathbb N$ such that $n<m$, the function
$u=G_m(\cdot,s)f-G_n(\cdot,s)f$ satisfies the differential
inequality $D_tu-{\mathscr A}_nu\le 0$
and vanishes at $t=s$. The maximum principle in Proposition
\ref{primax} then implies that $u\le 0$ in $[s,+\infty)\times\R^d$, i.e.,
\begin{eqnarray*}
(G_m(t,s)f)(x)\le (G_n(t,s)f)(x),\qquad\;\,s\le
t,\;\,x\in\R^d.
\end{eqnarray*}
In particular, for any fixed $t>s$ and $x\in\Rd$, the sequence
$((G_n(t,s)f)(x))$ is nonincreasing. Hence, it converges to some function $u$ as $n\to +\infty$.
To show that $u=G(\cdot,s)f$, it suffices to use the
same arguments as in the proof of Theorem \ref{thm-1.3}. We leave the details to the reader.

\emph{Step 3.} We now complete the proof. Writing \eqref{algeri}
with $G_n$ replacing $G$, we get
\begin{align*}
(G_n(t,s_2)\one)(x)-(G_n(t,s_1)\one)(x)=&\int_{s_1}^{s_2}(G_n(t,\tau)c_n(\tau,\cdot))(x)d\tau\\
\ge &
\int_{s_1}^{s_2}(G(t,\tau)c_n(\tau,\cdot))(x)d\tau,
\end{align*}
for any $s_1,s_2\in I$ such that $s_1\le s_2\le t$ and any $x\in\R^d$. Since the sequence $(c_n)$
is increasing and $G(t, \tau)$ is positive, we can apply Fatou lemma to pass to the limit as
$n\to +\infty$ and get
\begin{eqnarray*}
(G(t,s_2)\one)(x)-(G(t,s_1)\one)(x)\ge\int_{s_1}^{s_2}(G(t,\tau)c(\tau,\cdot))(x)d\tau,
\end{eqnarray*}
for any $s_1$, $s_2$, $t$ and $x$ as above. This completes the
proof.
\end{proof}

\begin{coro}\label{cont-c_0}
For any $f\in C_0(\R^d)$ the following properties are satisfied:
\begin{enumerate}[\rm (i)]
\item
the function $G(t,\cdot)f$ belongs to $C(I_t;C_b(\Rd))$ for any $t\in I$;
\item
the function
$(t,s,x)\mapsto (G(t,s)f)(x)$ is continuous in $\Lambda\times \R^d$.
\end{enumerate}
\end{coro}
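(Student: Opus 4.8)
The plan is to prove Corollary \ref{cont-c_0} by combining the representation formula \eqref{repres-formula}, the uniform $L^1$-bound \eqref{1_est}, the differentiability formula \eqref{form-fond} from Lemma \ref{lemma_luca_luciana}(i), and a density/approximation argument reducing the general case $f \in C_0(\R^d)$ to the case $f \in C^2_c(\R^d)$.

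For part (i), I would first fix $t \in I$ and treat $f \in C^2_c(\R^d)$. Then \eqref{form-fond} gives, for $s_0, s_1 \in I_t$,
\begin{equation*}
G(t,s_1)f - G(t,s_0)f = -\int_{s_0}^{s_1} G(t,\sigma)\mathcal{A}(\sigma)f\,d\sigma,
\end{equation*}
and since the map $\sigma \mapsto \mathcal{A}(\sigma)f$ is bounded in $C_b(\R^d)$ (say by a constant $C$ on the compact time interval in play), estimate \eqref{stima-sol} yields $\|G(t,s_1)f - G(t,s_0)f\|_\infty \le C\int_{s_0 \wedge s_1}^{s_0 \vee s_1} e^{-c_0(t-\sigma)}\,d\sigma \to 0$ as $s_1 \to s_0$; thus $G(t,\cdot)f \in C(I_t;C_b(\R^d))$. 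For general $f \in C_0(\R^d)$, pick $(f_k) \subset C^2_c(\R^d)$ with $\|f_k - f\|_\infty \to 0$. By \eqref{stima-sol}, $\|G(t,s)f - G(t,s)f_k\|_\infty \le e^{-c_0(t-s)}\|f-f_k\|_\infty$, which is uniformly small on any bounded subinterval of $I_t$ once $k$ is large; hence $G(t,\cdot)f_k \to G(t,\cdot)f$ locally uniformly on $I_t$ in $C_b(\R^d)$, and a uniform limit of continuous $C_b(\R^d)$-valued functions is continuous. This proves (i).

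For part (ii), I would show joint continuity of $(t,s,x) \mapsto (G(t,s)f)(x)$ on $\Lambda \times \R^d$. Fix a point $(t_0,s_0,x_0) \in \Lambda$ with $t_0 > s_0$ (the diagonal $t_0 = s_0$ is handled separately using $G(t,t) = \mathrm{id}$, continuity at $t = s$ being the content of Theorem \ref{thm-1.3}, together with the $L^1$-estimate to control $\|G(t,s)f - G(t,s_0)f\|_\infty$ as in part (i)). Write
\begin{align*}
|(G(t,s)f)(x) - (G(t_0,s_0)f)(x_0)| &\le |(G(t,s)f)(x) - (G(t,s_0)f)(x)| \\
&\quad + |(G(t,s_0)f)(x) - (G(t_0,s_0)f)(x_0)|.
\end{align*}
The first term is $\le \|G(t,s)f - G(t,s_0)f\|_\infty$, which is controlled by part (i) uniformly for $t$ near $t_0$ (using that the continuity modulus in (i) is uniform for $t$ in a compact subinterval — indeed the bound $C\int e^{-c_0(t-\sigma)}d\sigma$ is uniform there). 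For the second term, the function $v := G(\cdot,s_0)f$ solves $D_t v = \mathcal{A}(t)v$ on $(s_0,+\infty)\times\R^d$ and belongs to $C((s_0,+\infty)\times\R^d)$ (indeed to $C^{1+\alpha/2,2+\alpha}_{\mathrm{loc}}$) by Theorem \ref{thm-1.3}, so $(t,x) \mapsto (G(t,s_0)f)(x)$ is continuous at $(t_0,x_0)$. Combining the two estimates gives the claim.

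I expect the only real subtlety to be the uniformity of the time-continuity modulus in part (i) with respect to the parameter $t$ (needed to patch the two error terms in part (ii)) and the careful handling of the diagonal $\{t = s\}$ in part (ii), where one leans on Theorem \ref{thm-1.3} for continuity up to the initial time; everything else is a routine combination of \eqref{repres-formula}, \eqref{1_est}, \eqref{stima-sol}, \eqref{form-fond}, and density of $C^2_c(\R^d)$ in $C_0(\R^d)$.
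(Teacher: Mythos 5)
Your proof is correct, and for part (ii) it takes a cleaner route than the paper. The paper splits into cases: for $s\ge s_0$ it uses the same decomposition as you (change $s$ first, then $(t,x)$), but for $s<s_0$ it changes $(t,x)$ first and then $s$, which forces it to re-invoke interior Schauder estimates (their inequality \eqref{lipschitz-luglio}) to get a joint Lipschitz modulus in $(t,x)$ that is \emph{uniform in $s$}. Your single decomposition
\begin{equation*}
|(G(t,s)f)(x)-(G(t_0,s_0)f)(x_0)|\le \|G(t,s)f-G(t,s_0)f\|_\infty+|(G(t,s_0)f)(x)-(G(t_0,s_0)f)(x_0)|
\end{equation*}
works in both cases (since $t$ near $t_0>s_0$ forces $t>s_0$, so $G(t,s_0)f$ is defined). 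The first piece needs only the bound from \eqref{form-fond}, which is uniform for $t$ and $s,s_0$ in compacts, and the second piece has $s_0$ frozen so you can lean directly on the joint $(t,x)$-continuity of $G(\cdot,s_0)f$ already established in Theorem \ref{thm-1.3}; no quantitative Schauder bound is needed. The trade-off is that the paper's version actually records local Lipschitz continuity of $(t,s,x)\mapsto (G(t,s)f)(x)$, while yours gives only continuity — but continuity is all the corollary asserts. Your handling of the diagonal $t_0=s_0$ via the uniform estimate $\|G(t,s)f-f\|_\infty\le C\,|t-s|$ (which follows from \eqref{form-fond} with $s_1=t$) is sound.
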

\begin{proof}
It suffices to prove the statements when $f\in C^\infty_c(\R^d)$. Indeed, the case when
$f \in C_0(\R^d)$ follows by density approximating $f$
uniformly in $\R^d$ by a sequence of functions $f_n\in C^\infty_c(\R^d)$
and taking into account that $G(\cdot,\cdot)f_n$ converges to $G(\cdot,\cdot)f$
uniformly in $K\times\R^d$ for any compact set $K\subset \Lambda$.

(i). Formula \eqref{form-fond} shows that
\begin{align*}
\|G(t,s_1)f-G(t,s_0)f\|_{\infty}&\le \sup_{r\in
[s_0,s_1]}\|G(t,r)\mathcal{A}(r)f\|_{\infty}|s_1-s_0|\\
&\le \sup_{r\in
[s_0,s_1]}(e^{-c_0(t-r)}\|\mathcal{A}(r)f\|_{\infty})|s_1-s_0|,
\end{align*}
for any $t\le s_0<s_1$, and this implies that the function $G(t,\cdot)f$ is locally Lipschitz continuous in
$(-\infty,t]$ with values in $C_b(\R^d)$.

(ii). Using the classical Schauder estimates in \cite[Theorem 3.5]{Fri64Par}, we can show that, for any compact set $[a,b]\subset I$, any
$m\in\mathbb N$ and any compact set $K\subset\Rd$, $\|G(\cdot,s)f\|_{C^{1+\alpha/2,2+\alpha}([s,s+m]\times K)}$ is bounded from above by a constant
$C_1$ independent of $s\in [a,b]$. In particular, this shows that
\begin{equation}
|(G(t_2,s)f)(x)-(G(t_1,s)f)(x_0)|\le C_1\left (|t_2-t_1|+|x-x_0|\right ),
\label{lipschitz-luglio}
\end{equation}
for any $t_1,t_2\in [s,s+m]$, any $x,x_0\in K$ and any $s\in [a,b]$.

Let $(t,s,x),\,(t_0,s_0,x_0) \in \Lambda\times \R^d$ with $s, s_0 \in [a,b]$ for some $[a,b]\subset I$. Assume that $s<s_0$; by \eqref{form-fond}
and \eqref{lipschitz-luglio} we can estimate
\begin{align*}
|(G(t,s)f)(x)-(G(t_0,s_0)f)(x_0)|&\leq |(G(t,s)f)(x)-(G(t_0,s)f)(x_0)|\nnm\\
&\quad+|(G(t_0,s)f)(x_0)-(G(t_0,s_0)f)(x_0)|\nnm\\
& \leq C_1\left (|t-t_0|+|x-x_0|\right )+ C_2 |s-s_0|,
\end{align*}
where $C_2=\sup_{r\in [a,b]}(e^{-c_0(t_0-r)}\|\mathcal{A}(r)f\|_{\infty})$.
Hence,
\begin{align*}
\lim_{(t,s,x)\to(t_0,s_0^-,x_0)}(G(t,s)f)(x)=(G(t_0,s_0)f)(x_0).
\end{align*}
Now, suppose that $s\geq s_0$ and $|t-t_0|\le 1$. Then, $(t,s_0)\in \Lambda$ and
\begin{align*}
|(G(t,s)f)(x)-(G(t_0,s_0)f)(x_0)|&\leq |(G(t,s)f)(x)-(G(t,s_0)f)(x)|\\
&\quad+|(G(t,s_0)f)(x)-(G(t_0,s_0)f)(x_0)|\\
& \leq  C_3 |s-s_0| + |(G(t,s_0)f)(x)-(G(t_0,s_0)f)(x_0)|,
\end{align*}
where $C_3:=\max_{t\in [t_0-1,t_0+1]}\max_{s\in [a,b]}(e^{-c_0(t-r)}\|\mathcal{A}(r)f\|_{\infty})$.
Hence,
\begin{align*}
\lim_{(t,s,x)\to(t_0,s_0^+,x_0)}(G(t,s)f)(x)=(G(t_0,s_0)f)(x_0),
\end{align*}
and the proof is completed.
\end{proof}

\section{Compactness of the evolution operator in $C_b(\R^d)$}
\label{sect-4}
We now give sufficient conditions ensuring that the operator $G(t,s)$ is compact.
We stress that in the case when $c\equiv 0$ (i.e., in the conservative case) a sufficient condition for
$G(t,s)$ be compact in $C_b(\Rd)$ has been established in \cite[Theorem 3.3]{Lun10Com}. For notational convenience, for any
interval $J\subset I$, we set
\begin{eqnarray*}
\widetilde\Lambda_J:=\{(t,s)\in J\times J: t>s\}.
\end{eqnarray*}

\begin{prop}\label{tight_equiv_comp}
Let $J\subset I$ be an interval.
The following properties are equivalent.
\begin{enumerate}[\rm (i)]
\item
for every $(t,s)\in \widetilde\Lambda_J$, $G(t,s)$ is compact in $C_b(\Rd)$;
\item
for every $(t,s)\in\widetilde\Lambda_J$ and every $\varepsilon>0$, there exists $R>0$ such that
\begin{equation}\label{tight}
g_{t,s}(x,\R^d\setminus B(0,R))\le\varepsilon,\qquad\;\,x\in\Rd,
\end{equation}
where the measures $g_{t,s}(x,dy)$ are defined in \eqref{kernel}.
\end{enumerate}
\end{prop}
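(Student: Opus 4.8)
The plan is to prove the two implications separately, with the compactness criterion for subsets of $C_b(\Rd)$ — uniform boundedness plus equicontinuity on compact sets plus uniform smallness at infinity — as the guiding principle, since $G(t,s)$ maps into $C_b(\Rd)$ and is already known to smooth (it even maps $B_b(\Rd)$ into $C^{2+\alpha}_{\rm loc}(\Rd)$ by Proposition \ref{greenkernel}(iii)).

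\emph{(ii)$\Rightarrow$(i).} Fix $(t,s)\in\widetilde\Lambda_J$ and pick an intermediate time $r\in(s,t)$. Write $G(t,s)=G(t,r)G(r,s)$ by the evolution law (Proposition \ref{greenkernel}(i)). For $f$ in the unit ball of $C_b(\Rd)$ the function $G(r,s)f$ is bounded by $e^{-c_0(r-s)}$, and by the representation formula together with $(ii)$ applied at $(r,s)$, the family $\{G(r,s)f\}$ is ``uniformly small at infinity'' in the sense that $\sup_{|x|\ge R}|(G(r,s)f)(x)|\le\varepsilon$ for $R$ large. Then I want to show that applying $G(t,r)$ to such a family produces a relatively compact set. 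First, equicontinuity on compact sets of $\Rd$ is automatic: by the Schauder interior estimates the functions $G(t,r)g$, with $g$ ranging over a bounded subset of $C_b(\Rd)$, are bounded in $C^{2+\alpha}(B(0,R))$ for each $R$, hence equi-Lipschitz on compact sets. Second, uniform smallness at infinity of $G(t,r)g$ has to be transferred from that of $g$: splitting $g=g\chi_{B(0,R)}+g\chi_{\Rd\setminus B(0,R)}$, the second piece has sup-norm $\le\varepsilon$ so $G(t,r)$ of it is $\le e^{-c_0(t-r)}\varepsilon$ uniformly, while the first piece is bounded and compactly supported, and I would use $(ii)$ at $(t,r)$ to control $(G(t,r)(g\chi_{B(0,R)}))(x)=\int_{B(0,R)}g(t,r,x,y)g(y)\,dy\le\|g\|_\infty\, g_{t,r}(x,\Rd)$ — but this last bound is not small in $x$. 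So instead I would argue directly on $G(t,s)f$: writing $(G(t,s)f)(x)=\int_{\Rd}g(t,s,x,y)f(y)\,dy$ and using $(ii)$ at $(t,s)$ to say $g_{t,s}(x,\Rd\setminus B(0,R))\le\varepsilon$ uniformly in $x$, I get that the family $\{G(t,s)f:\|f\|_\infty\le1\}$ is uniformly bounded, equi-Lipschitz on compacts (Schauder), and satisfies $\sup_x|{(G(t,s)f)(x)}-\int_{B(0,R)}g(t,s,x,y)f(y)\,dy|\le\varepsilon$. This is exactly the hypothesis of the compactness characterization for bounded equicontinuous families in $C_b(\Rd)$ with uniform decay, so $G(t,s)$ is compact. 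In fact the intermediate-time splitting is not even needed here.

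\emph{(i)$\Rightarrow$(ii).} Argue by contraposition: suppose $(ii)$ fails, so there are $(t,s)\in\widetilde\Lambda_J$, $\varepsilon_0>0$, a sequence $R_n\to+\infty$ and points $x_n\in\Rd$ with $g_{t,s}(x_n,\Rd\setminus B(0,R_n))>\varepsilon_0$. Using irreducibility / the fact that $g_{t,s}(x,\Rd)\le e^{-c_0(t-s)}$ is finite, for each $n$ choose (via inner regularity of the finite measure $g_{t,s}(x_n,\cdot)$) a compact set and then a continuous $f_n$ with $\|f_n\|_\infty\le1$, $f_n$ supported in $\Rd\setminus B(0,R_n)$, such that $(G(t,s)f_n)(x_n)=\int g(t,s,x_n,y)f_n(y)\,dy>\varepsilon_0/2$. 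The sequence $(f_n)$ is bounded in $C_b(\Rd)$; if $G(t,s)$ were compact, $(G(t,s)f_n)$ would have a subsequence converging uniformly on $\Rd$ to some $h\in C_b(\Rd)$. On the other hand, for any fixed $R>0$ and $n$ large enough that $R_n>R$, $f_n$ vanishes on $B(0,R)$, and by dominated convergence applied to the kernel one checks $\lim_n (G(t,s)f_n)(x)=0$ for each fixed $x$ (because $\int_{\Rd\setminus B(0,R_n)}g(t,s,x,y)\,dy\to0$ as $n\to\infty$, the tail of an integrable function). Hence $h\equiv0$, contradicting $\|G(t,s)f_n\|_\infty>\varepsilon_0/2$. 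This contradiction proves $(ii)$.

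\emph{Main obstacle.} The delicate point is the direction $(ii)\Rightarrow(i)$: one must turn the uniform-in-$x$ tightness of the sub-probability kernels into genuine relative compactness in the sup norm, i.e. control the family $\{G(t,s)f\}$ simultaneously near infinity (from tightness) and on compacts (from Schauder estimates), and then invoke the right compactness criterion for bounded subsets of $C_b(\Rd)$ that are equicontinuous on compacta and uniformly small at infinity. Care is needed because $G(t,s)$ does not map into $C_0(\Rd)$ in general (indeed the paper stresses it does not), so $G(t,s)f$ itself need not vanish at infinity — only the \emph{increment} $\|G(t,s)f-\int_{B(0,R)}g(t,s,\cdot,y)f(y)\,dy\|_\infty$ is small, and one must check this suffices for the Arzel\`a–Ascoli-type argument. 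The converse direction is comparatively soft once one exploits finiteness of the kernel measures together with the pointwise decay of tails of integrable functions.
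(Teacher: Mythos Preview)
Your argument for $(i)\Rightarrow(ii)$ is correct and essentially coincides with the paper's (the paper uses a fixed sequence $f_n$ with $\chi_{\Rd\setminus B(0,n+1)}\le f_n\le\chi_{\Rd\setminus B(0,n)}$ rather than a contradiction, but the content is the same).

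There is, however, a genuine gap in your $(ii)\Rightarrow(i)$. You begin with the intermediate time $r\in(s,t)$, run into trouble, and then declare that ``the intermediate-time splitting is not even needed here'', switching to a direct argument. That switch is the mistake. Your direct argument lists three properties of the family $\{G(t,s)f:\|f\|_\infty\le1\}$ --- uniform boundedness, equicontinuity on compacts via Schauder, and the uniform approximation
\[
\sup_x\Big|(G(t,s)f)(x)-\int_{B(0,R)}g(t,s,x,y)f(y)\,dy\Big|\le\varepsilon
\]
--- and then appeals to an unnamed ``compactness characterization for bounded equicontinuous families in $C_b(\Rd)$ with uniform decay''. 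No such criterion applies here: the approximants $\int_{B(0,R)}g(t,s,\cdot,y)f(y)\,dy$ still depend on $f$ through $f|_{B(0,R)}$, and the map $f\mapsto f|_{B(0,R)}$ from the unit ball of $C_b(\Rd)$ to $L^\infty(B(0,R))$ is not compact. So you have only shown that $G(t,s)$ is an operator-norm limit of operators $T_R f:=G(t,s)(f\chi_{B(0,R)})$, without ever establishing that any $T_R$ is compact. Your own ``Main obstacle'' paragraph flags exactly this point (``one must check this suffices'') but never checks it.

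The paper's proof shows why the intermediate time is the right tool. One sets $S_nf:=G(t,r)\big(\chi_{B(0,n)}G(r,s)f\big)$ and observes two things. First, tightness applied at $(t,r)$ gives $\|G(t,s)-S_n\|_{{\mathcal L}(C_b(\Rd))}\to0$. Second, $S_n$ factors as $f\mapsto G(r,s)f|_{\overline{B(0,n)}}$ followed by $\phi\mapsto G(t,r)(\phi\chi_{B(0,n)})$; the first map is compact from $C_b(\Rd)$ into $C(\overline{B(0,n)})$ because Schauder estimates give a uniform $C^{2+\alpha}(B(0,n))$-bound and Arzel\`a--Ascoli applies on the compact ball, while the second map is bounded by strong Feller. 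Thus each $S_n$ is compact, and so is their uniform limit $G(t,s)$. The smoothing step $G(r,s)$ is precisely what turns the non-compact restriction $f\mapsto f|_{B(0,R)}$ into a compact one, and this is what your direct route is missing.
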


\begin{proof}
$(i)\Rightarrow (ii)$.
Suppose that $G(t,s)$ is compact and consider a sequence $(f_n)$
such that $\chi_{\R^d\setminus B(0,n+1)}\le f_n\le \chi_{\R^d\setminus B(0,n)}$ for any $n\in\mathbb N$.
Clearly, $f_n$ converges to $0$ locally uniformly in $\R^d$, as $n\to +\infty$. Using the representation formula \eqref{repres-formula} it is
easy to check that $G(t,s)f_n$ converges to $0$ pointwise in $\R^d$.
Since the operator $G(t,s)$ is compact and the sequence $(f_n)$ is bounded, we can extract a subsequence
$(f_{n_k})$ such that $G(t,s)f_{n_k}$ converges to $0$ uniformly in $\R^d$. This is enough to infer that
the whole sequence $(G(t,s)f_n)$ tends to $0$, uniformly in $\R^d$, as $n\to +\infty$.

To complete the proof, it suffices to observe that
\begin{align*}
g_{t,s}(x,\R^d\setminus B(0,n))=(G(t,s)\chi_{\Rd\setminus B(0,n)})(x)\le
(G(t,s)f_{n-1})(x),
\end{align*}
for any $x\in\R^d$ and any $n\in\mathbb N$.
\medskip

$(ii)\Rightarrow (i)$. Fix $s,t\in I$ with $s<t$ and $r\in (s,t)$. Further, consider the family of operators $S_n$ ($n\in\mathbb N$) defined as follows:
\begin{eqnarray*}
S_nf=G(t,r)(\chi_{B(0,n)}G(r,s)f),\qquad\;\,f\in C_b(\R^d),\;\,n\in\mathbb N.
\end{eqnarray*}
Since $G(t,r)$ is strong Feller (see Proposition
\eqref{greenkernel}(iii)), $S_n$ is a bounded operator in $C_b(\R^d)$. Moreover,
\begin{align}
|(G(t,s)f)(x)-(S_nf)(x)|=&\left |\int_{\R^d\setminus B(0,n)}(G(r,s)f)(y)g_{t,r}(x,dy)\right |\notag\\
\le &\|G(r,s)f\|_{\infty}g_{t,r}(x,\R^d\setminus B(0,n)),
\label{comp-1}
\end{align}
for any $x\in\Rd$, and the last side of \eqref{comp-1} vanishes as $n\to +\infty$, uniformly with respect to $x\in\R^d$.
Hence, to prove the assertion it suffices to show that each operator $S_n$ is compact. This follows observing that
the operator $G(r,s)$ is compact from $C_b(\R^d)$ into $C(\overline{B(0,n)})$ for any $n\in\mathbb N$.
Indeed, the interior Schauder estimates imply that, for any bounded family ${\mathcal F}\subset C_b(\R^d)$, the family
$\mathcal{G}:=\{(G(s,r)f)_{|B(0,n)}: f\in {\mathcal F}\}$ is bounded in $C^{2+\alpha}(B(0,n))$. Therefore, $\mathcal{G}$
is equicontinuous and equibounded in $C(\overline{B(0,n)})$ by Arzel\`a-Ascoli theorem, i.e.,
the operator $f\mapsto (G(s,r)f)_{|B(0,n)}$ is compact. Thus, $S_n$ is compact as well. Being limit of compact operators, $G(t,s)$ is compact.
\end{proof}

In the following theorem we obtain a lower bound estimate for $g_{t,s}(x,\R^d)$ for every $t>s$ and any $x\in \R^d$,
which is crucial for the proof of Theorem \ref{thm-comp}.

\begin{prop}\label{notC_0}
Assume that Hypothesis $\ref{hyp1-bis}$ holds. Let $J\subset I$ be an interval
and suppose that there exist a positive and bounded function $W\in
C^2(\R^d\setminus B(0,R))$, $\mu\in\R$ and $R>0$ such that
$\inf_{x\in\R^d\setminus B(0,R)}W(x)>0$ and
\begin{equation}\label{cond_W}
\mathcal{A}(t)W-\mu W\ge 0, \quad (t,x)\in J\times \R^d\setminus B(0,R).
\end{equation} Then, for any $s_0,T\in J$, such that $T>s_0$,
there exists a positive constant $C_{T,s_0}$ such
that
\begin{equation}\label{l_bound}
\int_{\R^d}g_{t,s}(x,dy)\ge C_{T,s_0},
\end{equation}
for any $s,t\in\R$, with $s_0\le s\le t\le T$ and any $x\in\R^d$.
\end{prop}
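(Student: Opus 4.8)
The plan is to rewrite \eqref{l_bound} as $(G(t,s)\one)(x)=\int_{\R^d}g_{t,s}(x,dy)\ge C_{T,s_0}$ and to obtain this bound in two stages: a uniform positive lower bound on the ball $\overline{B(0,R)}$, coming from irreducibility together with the continuity of $G(\cdot,\cdot)$ on $\Lambda$, and a barrier estimate on $\R^d\setminus B(0,R)$ built from $W$. For the first stage I would fix $\psi\in C^\infty_c(\R^d)$ with $0\le\psi\le1$ and $\psi\equiv1$ on $\overline{B(0,R)}$. Monotonicity of $G(t,s)$ (positivity of the kernel) gives $(G(t,s)\one)(x)\ge (G(t,s)\psi)(x)$, and since $\psi\in C_0(\R^d)$, Corollary \ref{cont-c_0}(ii) shows that $(t,s,x)\mapsto(G(t,s)\psi)(x)$ is continuous on $\Lambda\times\R^d$. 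On the compact set $\{(t,s)\in\Lambda: s_0\le s\le t\le T\}\times\overline{B(0,R)}$ this function is strictly positive: at points with $t>s$ by irreducibility/positivity of the Green function (Proposition \ref{greenkernel}(ii) and (iii)), and at points with $t=s$ because $(G(s,s)\psi)(x)=\psi(x)=1$. Compactness then yields a constant $c_1=c_1(T,s_0)>0$ with $(G(t,s)\one)(x)\ge c_1$ whenever $s_0\le s\le t\le T$ and $|x|\le R$.

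For the second stage, set $m:=\inf_{|x|>R}W>0$ and $M:=\sup_{|x|>R}W<\infty$ (shrinking $R$ to a slightly larger radius if needed so that $W$ is $C^2$ up to the relevant sphere), and for $\delta>0$ to be fixed consider $v(t,x):=\delta e^{\mu(t-s)}W(x)$ on $[s,T]\times(\R^d\setminus B(0,R))$. By \eqref{cond_W},
\[
D_tv(t,x)-\mathcal{A}(t)v(t,x)=\delta e^{\mu(t-s)}\big(\mu W(x)-\mathcal{A}(t)W(x)\big)\le0
\]
on $(s,T]\times(\R^d\setminus\overline{B(0,R)})$. Choosing $\delta:=M^{-1}\min\{1,\,c_1e^{-|\mu|(T-s_0)}\}$ ensures $v(s,\cdot)=\delta W\le1$ on $\R^d\setminus B(0,R)$ and, using the first stage, $v(t,x)\le \delta Me^{|\mu|(T-s_0)}\le c_1\le (G(t,s)\one)(x)$ for $(t,x)\in[s,T]\times\partial B(0,R)$. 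Since $W$ is bounded and, by Theorem \ref{thm-1.3} and Proposition \ref{greenkernel}, $G(\cdot,s)\one$ is bounded on $[s,T]\times\R^d$ and lies in $C^{1+\alpha/2,2+\alpha}_{\rm loc}((s,+\infty)\times\R^d)$, the function $u:=v-G(\cdot,s)\one$ fits the hypotheses of the exterior maximum principle in Proposition \ref{primax}; hence $u\le0$, that is $(G(t,s)\one)(x)\ge\delta e^{\mu(t-s)}W(x)\ge \delta\,m\,e^{-|\mu|(T-s_0)}=:c_2>0$ for all $s_0\le s\le t\le T$ and $|x|\ge R$. Taking $C_{T,s_0}:=\min\{c_1,c_2\}$ and noting $\overline{B(0,R)}\cup(\R^d\setminus B(0,R))=\R^d$ gives \eqref{l_bound}.

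I expect the delicate point to be the first stage: pointwise positivity of $(G(t,s)\one)(x)$ is immediate from irreducibility, but the uniformity jointly in $x\in\overline{B(0,R)}$ and in $(s,t)$ over the closed triangle $s_0\le s\le t\le T$ relies essentially on having continuity of $G(\cdot,\cdot)\psi$ \emph{up to the diagonal} $\{t=s\}$, which is exactly what Corollary \ref{cont-c_0}(ii) supplies and what makes the compactness argument go through. Once that uniform constant $c_1$ is in hand, the second stage is a standard barrier comparison, the only care being the calibration of $\delta$ so that the barrier stays below $G(t,s)\one$ on $\partial B(0,R)$.
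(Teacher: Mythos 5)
Your proof is correct, and it takes a genuinely different route from the paper's.

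The paper first proves the lower bound only for $s=s_0$: it uses positivity and continuity of $(t,x)\mapsto e^{-\mu(t-s_0)}(G(t,s_0)\one)(x)$ on the compact set $[s_0,T]\times\overline{B(0,R)}$ to get a constant $\kappa>0$, then runs essentially the same barrier comparison you run (with $z=e^{-\mu(t-s_0)}G(t,s_0)\one-\gamma W$ and $\gamma=\kappa/\sup W$). The crucial extra ingredient in the paper, which you avoid, is the passage from $s_0$ to an arbitrary $s\in[s_0,T]$: the paper invokes the integral identity of Lemma \ref{lemma_luca_luciana}(ii), which shows that $s\mapsto (G(t,s)\one)(x)$ is nondecreasing when $c\ge0$, and then handles general $c$ by the gauge transform $P(t,s)=e^{c_0(t-s)}G(t,s)$. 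You instead obtain uniformity in $(t,s)$ directly by replacing $G(t,s_0)\one$ with $G(t,s)\psi$ for a cut-off $\psi\in C_c^\infty$, $\psi\equiv 1$ on $\overline{B(0,R)}$, and invoking the joint continuity of $(t,s,x)\mapsto (G(t,s)\psi)(x)$ up to the diagonal from Corollary \ref{cont-c_0}(ii) together with compactness of $\{s_0\le s\le t\le T\}\times\overline{B(0,R)}$. This eliminates both the monotonicity lemma and the case distinction on the sign of $c_0$. As a small bonus, your argument uses only Hypothesis \ref{hyp1} (Corollary \ref{cont-c_0}, Proposition \ref{primax}, Proposition \ref{greenkernel} and Theorem \ref{thm-1.3} do not require Hypothesis \ref{hyp1-bis}), so it shows that Hypothesis \ref{hyp1-bis} is dispensable in this proposition, whereas the paper's proof genuinely uses it through Lemma \ref{lemma_luca_luciana}(ii). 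One minor slip: ``shrinking $R$ to a slightly larger radius'' should read ``enlarging $R$'' (and since $W\in C^2(\R^d\setminus B(0,R))$ is already $C^2$ up to the sphere, that precaution is unnecessary). Otherwise the calibration of $\delta$, the verification of the exterior maximum principle hypotheses for $u=v-G(\cdot,s)\one$, and the gluing of the two estimates are all sound.
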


\begin{proof}
We first assume that $c\ge 0$ and introduce the function $v$ defined by $v(t,x)=e^{-\mu
(t-s_0)}(G(t,s_0)\one)(x)$ for any $t\geq s_0$ and $x\in\R^d$.
Since $G(t,s_0)\one$ is everywhere positive in $\R^d$,
the minimum of $v$ over $[s_0,T]\times \overline{B(0,R)}$ is a positive constant, which we denote by $\kappa$.

Let $z:[s_0,T]\times\R^d\to\R$ be defined by $z(t,x)=v(t,x)-\gamma
W(x)$ for any $(t,x)\in [s_0,T]\times\R^d$, where
$\gamma=\kappa/\sup_{x\in \Rd\setminus B(0,R)}W(x)$. Clearly,
$z$ belongs to $C_b([s_0,T]\times\R^d)\cap
C^{1,2}((s_0,T]\times\R^d)$ and solves the
following problem:
\begin{eqnarray*}
\left\{
\begin{array}{lll}
D_tz(t,x)\ge\mathcal{A}_\mu(t)z(t,x), & t\in (s_0,T], &
x\in\R^d\setminus\overline{B(0,R)},\\[1mm]
z(t,x)\ge 0, &t\in [s_0,T], & x\in\partial B(0,R),\\[1mm]
z(s_0,x)\ge 0, &&x\in\R^d\setminus B(0,R).
\end{array}
\right.
\end{eqnarray*}
The maximum principle in Proposition
\ref{primax} implies that $z\ge 0$ in $[s_0,T]\times\R^d\setminus
B(0,R)$ or, equivalently,
\begin{eqnarray*}
e^{-\mu (t-s_0)}(G(t,s_0)\one)(x)\ge \gamma
W(x)\ge\gamma\inf_{y\in\R^d\setminus B(0,R)}W(y),
\end{eqnarray*}
for any $t\in
[s_0,T]$ and any $x\in\R^d\setminus B(0,R)$.
It thus follows that $G(t,s_0)\one\ge C_{s_0,T}$ in $\Rd$, for any
$s_0\le t\le T$, where
\begin{eqnarray*}
C_{s_0,T}=\min\{1,e^{\mu(T-s_0)}\}\min\left\{\kappa,\,\gamma\inf_{y\in\R^d\setminus B(0,R)}W(y)\right\}.
\end{eqnarray*}

Let us now fix $s$ such that $s_0<s<t$. From formula \eqref{algeri} we
infer that the function $(G(t,\cdot)\one)(x)$ is increasing. Therefore,
$(G(t,s)\one)(x)\ge (G(t,s_0)\one)(x)\ge C_{s_0,T}$ for any $x\in\R^d$,
and this accomplishes the proof in the case when $c\ge 0$, since by the representation formula
\eqref{repres-formula} and \eqref{kernel},
$g_{t,s}(x,\R^d)=(G(t,s)\one)(x)$ for any $s,t\in I$, with $s<t$, and any $x\in\Rd$.

In the general case when $c_0<0$, let
$(P(t,s))=(e^{c_0(t-s)}G(t,s))$ be the evolution operator associated with the second-order elliptic operator
\begin{eqnarray*}
{\mathcal A}_{-c_0}=\sum_{i,j=1}^dq_{ij}(t,x)D_{ij}+\sum_{j=1}^db_j(t,x)D_j-(c(t,x)-c_0).
\end{eqnarray*}
Clearly, the operator ${\mathcal A}_{-c_0}$ satisfies Hypotheses \ref{hyp1}(iv) and \ref{hyp1-bis} with the same $\lambda_J$ and $\varphi_J$.
Moreover, it fulfills also assumption \eqref{cond_W} with $\mu$ replaced with $c_0+\mu$.
Hence, from the above arguments, it follows that for any $s_0$, $T$ there exists a positive constant
$C_{s_0,T}'$ such that
$(P(t,s)1)(x)\ge C_{s_0,T}'$ for any $x\in\Rd$ and any $T\ge t\ge s \ge s_0$,
and \eqref{l_bound} follows with $C_{s_0,T}=C_{s_0,T}'$.
The proof is complete.
\end{proof}
Adapting to our situation the technique in \cite{MetPalWac02 Com},
we give a sufficient condition which ensures compactness of the family $G(t,s)$ for $t>s$
in the non conservative case.

\begin{thm}
\label{thm-comp} Assume that Hypothesis $\ref{hyp1-bis}$ is satisfied and there exist $R>0$, $d_1,d_2\in I$, with $d_1<d_2$, a
positive function $\varphi\in C^2(\Rd)$, blowing up as $|x|\to +\infty$, and a
convex increasing function $h:[0,+\infty)\to \R$ such that
$1/h\in L^1(a,+\infty)$ for large $a$ and
\begin{equation}
(\mathcal{A}(s)\varphi)(x)\leq -h(\varphi(x)),\qquad\;\, s\in [d_1,d_2],\;\, |x|\geq R.
\label{cond-comp}
\end{equation}
Finally, let the assumptions of Proposition $\ref{notC_0}$ hold true with $J=[d_1,d_2]$.
Then, $G(t,s)$ is compact in $C_b(\R^d)$ for any $(t,s)\in \{(t,s)\in \Lambda: s\le d_2,\,t\ge d_1,\, t\neq s\}$.
\end{thm}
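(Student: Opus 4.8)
The plan is to combine the equivalence in Proposition~\ref{tight_equiv_comp} with the Lyapunov-type estimate \eqref{cond-comp}, using the lower bound \eqref{l_bound} of Proposition~\ref{notC_0} to convert a bound on the \emph{unnormalized} mass $g_{t,s}(x,\R^d)$ into genuine tightness. By Proposition~\ref{tight_equiv_comp} it suffices to prove that, for every admissible pair $(t,s)$ (i.e.\ $s\le d_2$, $t\ge d_1$, $s<t$) and every $\varepsilon>0$, there is $R'>0$ with $g_{t,s}(x,\R^d\setminus B(0,R'))\le\varepsilon$ for all $x\in\R^d$. First I would reduce to the case $s,t\in[d_1,d_2]$: for a general admissible pair, pick $d_1\le r_1<r_2\le d_2$ with $r_1\ge s$ (or $r_1=s$) and $r_2\le t$, write $G(t,s)=G(t,r_2)G(r_2,r_1)G(r_1,s)$, and note that $G(r_2,r_1)$ compact forces $G(t,s)$ compact (the Green-kernel representation shows the outer factors are bounded, and composition with a compact operator is compact); the strong Feller property from Proposition~\ref{greenkernel}(iii) handles the case where one of $r_1,s$ coincides. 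So assume $s_0:=d_1\le s<t\le d_2=:T$ from now on.

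The heart of the argument is a differential inequality for the function $u(\tau,x):=(G(t,\tau)\varphi)(x)$ viewed as a function of the backward time variable $\tau\in[s,t]$, exploiting \eqref{cond-comp}. The issue is that $\varphi$ is unbounded, so $G(t,\tau)\varphi$ is not a priori defined; the standard remedy is to work with the truncations $\varphi_k:=\varphi\wedge k$ (suitably smoothed) or, following \cite{MetPalWac02Com}, to use the approximating Cauchy--Dirichlet operators $G_n^D$. Concretely, on the ball $B(0,n)$ the function $\varphi$ is a legitimate datum, and by Lemma~\ref{lemma_luca_luciana}(i) applied in $B(0,n)$ one has $\frac{\partial}{\partial\tau}(G_n^D(t,\tau)\varphi)(x)=-(G_n^D(t,\tau)\mathcal{A}(\tau)\varphi)(x)$; combining this with \eqref{cond-comp}, the positivity of $G_n^D(t,\tau)$, and Jensen's inequality for the subprobability $g_{t,\tau}^{(n)}(x,dy)/G_n^D(t,\tau)\one$ against the convex function $h$, one obtains
\begin{equation*}
\frac{\partial}{\partial\tau}(G_n^D(t,\tau)\varphi)(x)\ge (G_n^D(t,\tau)h(\varphi))(x)\ge \big(G_n^D(t,\tau)\one\big)(x)\, h\!\left(\frac{(G_n^D(t,\tau)\varphi)(x)}{(G_n^D(t,\tau)\one)(x)}\right),
\end{equation*}
at least on the region $|x|\ge R$; the boundary terms on $\partial B(0,n)$ are handled because $\varphi\ge 0$ and $G_n^D$ kills the boundary. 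Using the lower bound $(G_n^D(t,\tau)\one)(x)\ge c(t,\tau)>0$ coming (in the limit $n\to\infty$) from Proposition~\ref{notC_0}, integrating the resulting scalar ODE $y'\ge c\,h(y/C)$ backward from $\tau=t$ (where $y(t)=\varphi(x)$) down to $\tau=s$, and invoking $1/h\in L^1(a,+\infty)$, one bounds $(G_n^D(t,s)\varphi)(x)$ by a constant independent of $x$ for $|x|$ large, hence (letting $n\to\infty$ via Theorem~\ref{thm-1.3}(ii)) $(G(t,s)\varphi)(x)\le \Phi(t-s)$ uniformly in $x\in\R^d$ for some finite $\Phi$.

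From a uniform-in-$x$ bound on $(G(t,s)\varphi)(x)$ the tightness is immediate: since $\varphi(x)\to+\infty$, for $\varepsilon>0$ choose $L$ with $\varphi(y)\ge \Phi(t-s)/\varepsilon$ whenever $|y|\ge L$; then by Chebyshev
\begin{equation*}
g_{t,s}(x,\R^d\setminus B(0,L))\le \frac{\varepsilon}{\Phi(t-s)}\int_{\R^d}\varphi(y)\,g_{t,s}(x,dy)=\frac{\varepsilon}{\Phi(t-s)}(G(t,s)\varphi)(x)\le\varepsilon,
\end{equation*}
for every $x$, which is exactly \eqref{tight}. Proposition~\ref{tight_equiv_comp} then yields compactness of $G(t,s)$ on $C_b(\R^d)$, and the reduction of the first paragraph extends this to all admissible $(t,s)$. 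The main obstacle I anticipate is the rigorous justification of the differential inequality for $G(t,\tau)\varphi$ with $\varphi$ unbounded: one must either carry the whole argument at the level of the finite-ball operators $G_n^D$ (tracking that all constants, in particular the lower bound for $G_n^D(t,\tau)\one$ on $|x|\ge R$, are uniform in $n$) and only pass to the limit at the end, or regularize $h$ near $0$ and $\varphi$ at infinity; the convexity of $h$ and the fact that $1/h$ is integrable at $+\infty$ are what make the backward ODE comparison produce an $x$-independent ceiling, and care is needed because the "subprobability normalization" $(G_n^D(t,\tau)\one)(x)$ degenerates as $\tau\to t$, so the comparison should be run with the explicit lower bound rather than naively.
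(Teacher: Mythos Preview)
Your global strategy --- Lyapunov inequality \eqref{cond-comp}, Jensen for the convex $h$, the lower mass bound \eqref{l_bound}, an ODE comparison using $1/h\in L^1$ near $+\infty$, then Chebyshev and Proposition~\ref{tight_equiv_comp} --- is precisely the paper's, as is the reduction to $(t,s)\in\widetilde\Lambda_{[d_1,d_2]}$ via the evolution law.

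The gap is in your preferred implementation through $G_n^D$. The lower bound \eqref{l_bound} is proved for $G(t,\tau)\one$, not for $G_n^D(t,\tau)\one$; the latter vanishes on $\partial B(0,n)$, so there is \emph{no} positive lower bound uniform in $x\in B(0,n)$, for any $n$. Hence the Jensen step cannot be run at the $G_n^D$ level with constants independent of $x$, and passing to the limit only \emph{after} the ODE comparison does not rescue this. (Your parenthetical ``tracking that \ldots the lower bound for $G_n^D(t,\tau)\one$ \ldots are uniform in $n$'' is exactly the step that fails.) A second imprecision: the restriction ``$|x|\ge R$'' in your displayed inequality is misplaced --- condition \eqref{cond-comp} constrains the integration variable $y$, not $x$; the paper simply replaces $h$ by $h-C$ so that $\mathcal A(\sigma)\varphi\le -h(\varphi)$ holds on all of $\R^d$.

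The paper resolves the unboundedness of $\varphi$ by truncating $\varphi$ rather than the domain: it sets $\varphi_n=\psi_n\circ\varphi$ with $\psi_n$ concave, increasing, the identity on $[0,n]$ and constant beyond $n+1$, so that $\varphi_n$ is constant outside a compact set. Then Lemma~\ref{lemma_luca_luciana}(ii) --- which you do not invoke, and which is stated for $G$ itself under the extra assumption $c\ge 0$ --- yields the integral \emph{inequality}
\[
(G(t,s)\varphi_n)(x)-(G(t,r)\varphi_n)(x)\ \ge\ -\int_r^s (G(t,\sigma)\mathcal A(\sigma)\varphi_n)(x)\,d\sigma,
\]
and the concavity of $\psi_n$ together with $\psi_n'(r)r\le\psi_n(r)$ controls the chain-rule error so that one may let $n\to\infty$ (splitting $\{\mathcal A(\sigma)\varphi>0\}$ and its complement and using monotone convergence). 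With $(G(t,\cdot)\varphi)(x)$ now finite, Jensen and \eqref{l_bound} are applied directly to $G$, giving the clean comparison $\beta(r):=(G(t,t-r)\varphi)(x)$ with $y'=-C_{d_1,d_2}h(y)$. The paper also first reduces to $c\ge 0$ via $P(t,s)=e^{c_0(t-s)}G(t,s)$; this makes $g_{t,s}(x,\R^d)\le 1$, so that monotonicity of $h$ turns Jensen into $h\big((G(t,s)\varphi)(x)\big)\le g_{t,s}(x,\R^d)^{-1}(G(t,s)h(\varphi))(x)$ and one avoids the $h(y/C)$ that appears in your ODE. Your alternative suggestion ``regularize $\varphi$ at infinity'' is, in effect, exactly this route; the missing ingredients are Lemma~\ref{lemma_luca_luciana}(ii) and the reduction to $c\ge 0$.
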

\begin{proof}
Of course we can limit ourselves to proving the compactness of $G(t,s)$ for $(t,s)\in\widetilde\Lambda_{[d_1,d_2]}$ since
for the other values of $(t,s)$ it suffices to recall that $(G(t,s))$ is an evolution operator.

Let us first assume that $c\ge 0$. We will prove that the measures $g_{t,s}(x,dy)$ satisfy condition
\eqref{tight} for any $(t,s)\in\widetilde\Lambda_{[d_1,d_2]}$.
First of all we prove that the function $\varphi$ is integrable with respect to every measure
$g_{t,s}(x,dy)$ ($t>s$, $x\in\R^d$), so that $(G(t,s)\varphi)(x)$ is well defined for such $t$, $s$ and $x$.
For every $n\in \mathbb N$ choose $\psi_n \in C^{2}([0,+\infty))$ such that
\begin{itemize}
\item[(i)] $\psi_n(r)=r$ for $r\in [0,n]$,
\item[(ii)]$\psi_n(r)=n+\frac{1}{2}$ for $r\geq n+1$,
\item[(iii)]$0 \leq \psi_n'\leq 1$ and $\psi_n''\leq 0$.
\end{itemize}
Note that the previous conditions imply that $\psi_n'(r)r\leq\psi_n(r)$ for
every $r\in [0,+\infty)$. The function $\varphi_n:= \psi_n\circ\varphi$ belongs to
$C^2(\R^d)$  and is constant outside a compact set for any $n\in\N$. By
Lemma \ref{lemma_luca_luciana}(ii), the differential inequality
$\psi_n'(r)r\leq\psi_n(r)$ and the positivity of the function $G(t,s)\varphi$, we get
\begin{align}\label{est1}
\varphi_n(x)&\geq \varphi_n(x)-(G(t,s)\varphi_n)(x)\nnm\\
&\geq -\int_s^t (G(t,\sigma)\mathcal{A}(\sigma)\varphi_n)(x)d\sigma\nnm\\
& = -\int_s^t (G(t,\sigma)(\psi_n'(\varphi)\mathcal{A}(\sigma)\varphi+ \psi_n''(\varphi)\langle Q D\varphi,D\varphi\rangle + c(\psi_n'(\varphi)\varphi-\varphi_n))(x)d\sigma\nnm\\
& \geq -\int_s^t
(G(t,\sigma)(\psi_n'(\varphi)\mathcal{A}(\sigma)\varphi)(x)d\sigma.
\end{align}
The right-hand side of \eqref{est1} can be split
into two parts as follows:
\begin{align}\label{rhside}
\int_s^t (G(t,\sigma)(\psi_n'(\varphi)\mathcal{A}(\sigma)\varphi)(x)d\sigma =& \int_s^t d\sigma\int_{A_+(\sigma)}\psi_n'(\varphi(y))(\mathcal{A}(\sigma)\varphi)(y)g_{t,\sigma}(x,dy)\nnm\\
&+   \int_s^t d\sigma\int_{A_-(\sigma)}\psi_n'(\varphi(y))(\mathcal{A}(\sigma)\varphi)(y)g_{t,\sigma}(x,dy),
\end{align}
where $A_+(\sigma)=\{y\in\Rd: (\mathcal{A}(\sigma)\varphi)(y)>0\}$ and
$A_-(\sigma)=\{y\in\Rd: (\mathcal{A}(\sigma)\varphi)(y)\le 0\}$.
Since $(\psi_n'(\varphi))(y)$ is positive, increasing in $n$ and converges to $1$ for each $y\in \R^d$,
the first integral in the right-hand side of \eqref{rhside} converges by the monotone convergence theorem to $\int_s^t d\sigma\int_{A_+(\sigma)}(\mathcal{A}(\sigma)\varphi)(y)g_{t,\sigma}(x,dy)$ which is
finite since the sets $A_+(\sigma)$ are equibounded in $\R^d$ (note that $(\mathcal{A}(\sigma)\varphi)(x)$
tends to $-\infty$ as $|x|\to +\infty$ uniformly respect to $\sigma \in [d_1,d_2]$).
Now, using \eqref{est1} and \eqref{rhside} we get
\begin{align*}
&-\int_s^t d\sigma\int_{A_-(\sigma)}\psi_n'(\varphi(y))(\mathcal{A}(\sigma)\varphi)(y)g_{t,\sigma}(x,dy)\\
\le &\varphi_n(x)+\int_s^t
d\sigma\int_{A_+(\sigma)}\psi_n'(\varphi(y))(\mathcal{A}(\sigma)\varphi)(y)g_{t,\sigma}(x,dy).
\end{align*}
Letting $n \to + \infty$ we deduce that the integral $\int_s^t d\sigma\int_{A_-(\sigma)}(\mathcal{A}(\sigma)\varphi)(y)g_{t,\sigma}(x,dy)$
is finite as well as the integral $\int_s^t (G(t,\sigma)\mathcal{A}(\sigma)\varphi)(x)d\sigma$. Moreover, since
\begin{eqnarray*}
(G(t,s)\varphi_n)(x) \leq \int_s^t (G(t,\sigma)(\psi_n'(\varphi))\mathcal{A}(\sigma)\varphi)(x)d\sigma + \varphi_n(x),
\end{eqnarray*}
letting $n \to +\infty$ we also deduce that $(G(t,s)\varphi)(x)$ is finite for every $(t,s)\in \Lambda_{[d_1,d_2]}$ and any $x\in\R^d$.
Next, starting from the inequality
\begin{eqnarray*}
(G(t,s)\varphi_n)(x)-(G(t,r)\varphi_n)(x) \geq -\int_r^s (G(t,\sigma)\mathcal{A}(\sigma)\varphi_n)(x)d\sigma,\;\; r<s<t,\;\, x\in\R^d
\end{eqnarray*}
and arguing as above, we can show that
\begin{equation}\label{est_1}
(G(t,s)\varphi)(x)-(G(t,r)\varphi)(x) \geq -\int_r^s (G(t,\sigma)\mathcal{A}(\sigma)\varphi)(x)d\sigma,
\end{equation}
for every $r<s<t$ and $x\in\R^d$. Now, we prove that
$(G(t,s)\varphi)(x)$ is bounded by a constant independent of $x$.
Without loss of generality we can suppose
$(\mathcal{A}(s)\varphi)(x)\leq -h(\varphi(x))$, for any $s\in [d_1,d_2]$
and any $x\in \R^d$. Indeed, if this is not the case we replace $h$
by $h-C$ for a suitable constant $C$. We can also assume that $h$ vanishes at some point $x_h>0$.

 From the Jensen inequality for finite measures we get
\begin{eqnarray*}
h\left(\int_{\R^d}\varphi(y)g_{t,s}(x,dy)\right)\leq \frac{1}{g_{t,s}(x,\R^d)}\int_{\R^d}h(\varphi(y))g_{t,s}(x,dy)\quad t>s,\;\,x\in\R^d,
\end{eqnarray*}
since $0<g_{t,s}(x,\R^d)=(G(t,s)\one)(x)\leq 1$ for every $t\geq s$ and
$x\in\R^d$, and $h$ is increasing. We have thus obtained that
\begin{eqnarray*}
h((G(t,s)\varphi)(x))\leq
\frac{1}{g_{t,s}(x,\R^d)}(G(t,s)h(\varphi))(x),
\end{eqnarray*}
or, equivalently,
\begin{eqnarray*}
(G(t,s)h(\varphi))(x)\ge
g_{t,s}(x,\R^d)h((G(t,s)\varphi)(x)),\qquad\;\,t>s,\;\,x\in\R^d.
\end{eqnarray*}
Fix $s_0<T$. Then, by Proposition \ref{notC_0} it follows that
\begin{eqnarray*}
(G(t,s)h(\varphi))(x)\ge
C_{d_1,d_2}h((G(t,s)\varphi)(x))\quad\textrm{for each}\,\,d_1\le s\le
t\le d_2,\,x\in\R^d.
\end{eqnarray*}
Note that the function $(G(t,\cdot)h(\varphi))(x)$ is integrable in $[d_1,t]$ for any $t\in (d_1,d_2]$ since it
can be bounded from above by $-(G(t,\cdot)\mathcal{A}(\cdot)\varphi)(x)$.

Let us now fix $x\in \R^d$, $t\in [d_1,d_2]$ and define the function $\beta:[0,r_0)\to\R$,
where $r_0\in\R\cup\{+\infty\}$ satisfies $t-r_0=\inf I$, by setting
\begin{eqnarray*}
\beta(r):=(G(t, t-r)\varphi)(x), \quad r\in [0,r_0).
\end{eqnarray*}
Then, $\beta$ is measurable since it is the limit of the sequence of
the continuous functions $r \mapsto (G(t, t-r)\varphi_n)(x)$ (see Corollary \ref{cont-c_0}).

Fix $b=t-d_1$. From
estimate \eqref{est_1}, the condition
$(\mathcal{A}(s)\varphi)(x)\leq -h(\varphi(x))$, for any $s\in [d_1,d_2]$, and all the above
remarks, we deduce that
\begin{align*}
\beta(b)-\beta(0)&\leq
-\int_{t-b}^{t}(G(t,\sigma)h(\varphi))(x)d\sigma\\
&\leq -C_{d_1,d_2}\int_{t-b}^{t}h((G(t,\sigma)\varphi)(x))d\sigma\\
&= -C_{d_1,d_2}\int_0^b h(\beta(\sigma))d\sigma.
\end{align*}
Let $y(\cdot)=y(\cdot;x)$ denote the solution
of the following Cauchy problem
\begin{equation}
\left\{
\begin{array}{ll}
y'(r)=- C_{d_1,d_2} h(y(r)), \quad r\geq 0,\\[1mm]
y(0)=\varphi(x).
\end{array}
\right.
\label{pb-y}
\end{equation}
Then, (i) $\beta(r)\leq y(r)$ for every $r\in [0,b]$ and (ii) $y(\cdot,x)$ is bounded from
above in $[\delta,+\infty)$ for every $\delta>0$, uniformly with
respect to $x\in \R^d$, that is there exists
$\bar{y}=\bar{y}(\delta)>0$, independent of the initial datum
$\varphi(x)$, such that $y(r,x)\leq \bar{y}$ for every $r\ge\delta$.
To establish these properties it suffices to argue as in \cite[Theorem
3.3]{Lun10Com} and \cite[Theorem
5.1.5]{BerLor07Ana}. For the reader's convenience we provide here some details.
To prove (i) one argues by contradiction and supposes that there exists $s_0\in (0,b)$ such that
$\beta(s_0)>y(s_0)$. Then, there exists an interval $L$ containing $s_0$ where
$\beta>y$. It suffices to observe that the inequality
\begin{eqnarray*}
\beta(s_2)-\beta(s_1)\le -C_{d_1,d_2}\int_{s_1}^{s_2}h(\beta(\sigma))d\sigma,\qquad\;\,s_1,s_2\in [0,d],
\end{eqnarray*}
implies that the function $s\mapsto \beta(s)+C_{d_1,d_2}ms$, where $m:=\left (\min_{\R_+}h\right )$, is decreasing.
Thus,
\begin{align*}
\lim_{s\to s_0^-}(\beta(s)+C_{d_1,d_2}m s)&\ge\beta(s_0)+C_{d_1,d_2}m s_0\\
&>y(s_0)+C_{d_1,d_2}m s_0=
\lim_{s\to s_0^-}(y(s)+C_{d_1,d_2}ms),
\end{align*}
so that $\beta>y$ in a left neighborhood of $s_0$.
If we set $a=\inf L$, then $\beta(a)\le y(a)$.
We get to a contradiction observing that
\begin{eqnarray*}
\beta(s)-\beta(a)\le -C_{d_1,d_2}\int_a^sh(\beta(\sigma))d\sigma,\qquad\;\,
y(s)-y(a)= -C_{d_1,d_2}\int_a^sh(y(\sigma))d\sigma,\qquad\;\,
\end{eqnarray*}
which yields
\begin{eqnarray*}
\beta(s)-y(s)\le C_{d_1,d_2}\int_a^s\left (h(y(\sigma))-h(\beta(\sigma))\right )d\sigma,\qquad\;\,s\in L,
\end{eqnarray*}
which is a contradiction since the left-hand side is positive while the right-hand side is negative.

To prove (ii) we rewrite problem \eqref{pb-y} into the following equivalent form:
\begin{equation}
-\int_{\varphi(x)}^{y(t;x)}\frac{dz}{h(z)}=C_{d_1,d_2}t.
\label{AA}
\end{equation}
Suppose that $\varphi(x)>x_h$ (where, we recall, $x_h$ is the unique positive zero of $h$) and fix $\delta>0$ and $t\ge\delta$. Since $1/h$ is integrable in a neighborhood of $+\infty$, using the above formula we
conclude that
\begin{equation}
\int_{y(t;x)}^{+\infty}\frac{dz}{h(z)}\ge C_{d_1,d_2}t\ge C_{d_1,d_2}\delta.
\label{A}
\end{equation}
Since $1/h$ is not integrable in a right neighborhood of $x_h$, there exists a unique $M>x_h$ such that
\begin{equation}
\int_M^{+\infty}\frac{dz}{h(z)}=C_{d_1,d_2}\delta.
\label{B}
\end{equation}
 From \eqref{A} and \eqref{B} it follows that $y(t;x)\le M$ for any $t\ge\delta$.

Suppose now that $\varphi(x)<x_h$. Then, from \eqref{AA} it follows that $y(t;x)\le x_h$ for any $t\ge\delta$.
The proof of property (ii) is now complete.

The properties (i) and (ii) now imply that $(G(t,t-r)\varphi)(x)\leq \bar{y}$ for every $r\in [\delta,t-d_1]$.
Let $R>0$ and assume that $s\in [d_1,t-\delta]$. Then,
$(G(t,s)\varphi)(x)\le\bar{y}$. Hence,
\begin{align*}
g_{t,s}(x,\R^d\setminus B(0,R))&=\int_{\R^d\setminus B(0,R)}g_{t,s}(x,dy)\\
&\leq \frac{1}{\inf\{\varphi(y): |y|\geq R\}}\int_{\R^d\setminus B(0,R)}\varphi(y)g_{t,s}(x,dy)\\
& \leq \frac{(G(t,s)\varphi)(x)}{\inf\{\varphi(y): |y|\geq R\}}\\
&\leq \frac{\bar{y}}{\inf\{\varphi(y): |y|\geq R\}},
\end{align*}
and $\inf\{\varphi(y): |y|\geq R\}$ tends to $+\infty$ as $R\to+\infty$.
It follows that, for any $\varepsilon>0$, $g_{t,s}(x,\Rd\setminus B(0,R))\le\varepsilon$, for any $x\in\Rd$, if $R$ is sufficiently large
and $s\in [d_1,t-\delta]$. The arbitrariness of $\delta>0$ allows us to conclude through Proposition \ref{tight_equiv_comp}.

Let us now consider the general case when the infimum $c_0$ of $c$ is negative.
We introduce the evolution operator $(P(t,s))$=$(e^{c_0(t-s)}G(t,s))$ which is associated with the elliptic operator
${\mathcal A}_{-c_0}$. Note that $\mathcal{A}_{-c_0}$ satisfies assumption \eqref{cond-comp} with $h$ replaced by
$h-c_0$. Moreover, $\mathcal{A}_{-c_0}(t)W-(c_0+\mu) W\ge 0$ in $[d_1,d_2]\times\R^d\setminus B(0,R)$.
Since Hypothesis \ref{hyp1-bis} is trivially fulfilled, we conclude that the operator $P(t,s)$ is compact
for any $s,t\in [d_1,d_2]$ with $s<t$. As a byproduct $G(t,s)$ is compact in $C_b(\Rd)$ for the same values of $s$ and $t$.
This accomplishes the proof.
\end{proof}

\begin{rmk}
\rm
In the conservative case treated in \cite{Lun10Com}, the existence of the function $W$ as in Proposition \ref{notC_0} is not needed, since $g_{t,s}(x,\R^d)=1$ for every $t>s$ and every $x \in \R^d$. Hence, \eqref{l_bound} is
trivially satisfied.
\end{rmk}

\subsection{A consequence of Theorem \ref{thm-comp}}
\label{subsect-consequences}
Let us prove the following result.

\begin{thm}
Under the assumptions of Theorem $\ref{thm-comp}$, for any $s,t\in I$, with $s<t$, $G(t,s)$ preserves neither
$C_0(\R^d)$ nor $L^p(\R^d)$ $(p\in [1,+\infty))$.
\end{thm}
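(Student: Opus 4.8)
The plan is to exploit the compactness of $G(t,s)$ in $C_b(\R^d)$ established in Theorem~\ref{thm-comp}, together with the lower bound \eqref{l_bound} coming from Proposition~\ref{notC_0}, to rule out both invariance properties. First I would fix $s<t$ in $I$; by the evolution law it suffices to treat the case $(t,s)\in\widetilde\Lambda_{[d_1,d_2]}$, since for general $s<t$ one writes $G(t,s)=G(t,r)G(r,s)$ with $(r,\sigma)$ chosen appropriately in $[d_1,d_2]$ and uses that a composition involving a non-$C_0$-preserving (resp.\ non-$L^p$-preserving) factor, together with the strong Feller/representation properties, still fails to preserve the space; more simply, one observes that Theorem~\ref{thm-comp} already gives compactness of $G(t,s)$ for all the relevant $(t,s)$ with $s\le d_2$, $t\ge d_1$, and for the remaining pairs one reduces to these by the cocycle identity. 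Throughout, I will use the representation \eqref{repres-formula} and the fact (Proposition~\ref{notC_0}) that $(G(t,s)\one)(x)=g_{t,s}(x,\R^d)\ge C_{d_1,d_2}>0$ uniformly in $x\in\R^d$.

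For the failure of $C_0(\R^d)$-invariance: suppose by contradiction that $G(t,s)$ maps $C_0(\R^d)$ into itself. Take a sequence $(\psi_n)\subset C_c(\R^d)$ with $0\le\psi_n\le\one$ and $\psi_n\uparrow\one$ pointwise (e.g.\ $\psi_n\equiv 1$ on $B(0,n)$). Then $G(t,s)\psi_n\in C_0(\R^d)$ by assumption, and by monotone convergence $(G(t,s)\psi_n)(x)\uparrow(G(t,s)\one)(x)\ge C_{d_1,d_2}$ for every $x$. By compactness of $G(t,s)$ and the fact that $\one-\psi_n\downarrow 0$ locally uniformly with $0\le\one-\psi_n\le\one$, the sequence $G(t,s)(\one-\psi_n)$ converges uniformly on $\R^d$ (any subsequence has a uniformly convergent sub-subsequence whose pointwise limit is $0$, by the same argument as in the proof of Proposition~\ref{tight_equiv_comp}, $(i)\Rightarrow(ii)$). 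Hence $G(t,s)\psi_n\to G(t,s)\one$ uniformly on $\R^d$. A uniform limit of $C_0(\R^d)$-functions lies in $C_0(\R^d)$, so $G(t,s)\one\in C_0(\R^d)$; but $G(t,s)\one\ge C_{d_1,d_2}>0$ everywhere, contradicting decay at infinity. Therefore $G(t,s)$ does not preserve $C_0(\R^d)$.

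For the failure of $L^p(\R^d)$-invariance ($1\le p<+\infty$): I argue by duality/integration. Suppose $G(t,s)$ maps $L^p(\R^d)$ into $L^p(\R^d)$; since it is also a positive integral operator with kernel $g(t,s,x,y)$, a Schur-type / closed-graph argument shows it is then bounded on $L^p(\R^d)$. Apply it to $f_n=\chi_{B(0,n)}\in L^p(\R^d)$: on the one hand $\|f_n\|_{L^p}^p=|B(0,n)|$; on the other hand, using Tonelli and the uniform lower bound,
\[
\|G(t,s)f_n\|_{L^p(\R^d)}^p=\int_{\R^d}\Big(\int_{B(0,n)}g(t,s,x,y)\,dy\Big)^p dx\ .
\]
The inner integral increases to $(G(t,s)\one)(x)\ge C_{d_1,d_2}$ as $n\to\infty$, so by monotone convergence $\|G(t,s)f_n\|_{L^p}^p\to\int_{\R^d}(G(t,s)\one)(x)^p\,dx=+\infty$, while $\|f_n\|_{L^p}^p/\|G(t,s)f_n\|_{L^p}^p\to 0$. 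This contradicts boundedness of $G(t,s)$ on $L^p$ unless we are more careful: in fact the cleanest route is to note that if $G(t,s)$ preserved $L^p$ it would in particular send $\chi_{B(0,n)}$ to an $L^p$ function for each $n$, and then by monotone convergence the pointwise-increasing limit $G(t,s)\one$, being dominated in $L^p_{\rm loc}$-norm growth, would have to be finite in $L^p$; but $G(t,s)\one\ge C_{d_1,d_2}$ on all of $\R^d$ forces $\|G(t,s)\one\|_{L^p}=+\infty$, and a uniform $L^p\to L^p$ bound would give $\|G(t,s)\chi_{B(0,n)}\|_{L^p}\le M|B(0,n)|^{1/p}$, contradicting $\|G(t,s)\chi_{B(0,n)}\|_{L^p}\ge C_{d_1,d_2}|B(0,n)|^{1/p}$ once we also bound below by restricting the outer integral to $B(0,n)$ and letting $n\to\infty$—no, rather: restrict the outer integral to a fixed large ball and let $n\to\infty$ to see $\|G(t,s)\chi_{B(0,n)}\|_{L^p}$ stays bounded below by $C_{d_1,d_2}|B(0,\rho)|^{1/p}$ while $\|\chi_{B(0,n)}\|_{L^p}\to\infty$—this shows $G(t,s)$ cannot be bounded if it were to also decrease mass, which it does not; the contradiction is therefore obtained by combining the uniform lower bound on $G(t,s)\one$ with the fact that a bounded $L^p$ operator maps the $L^p$-bounded family $\{\,|B(0,n)|^{-1/p}\chi_{B(0,n)}\}$ to an $L^p$-bounded family, whereas $\||B(0,n)|^{-1/p}G(t,s)\chi_{B(0,n)}\|_{L^p}^p\ge |B(0,n)|^{-1}\int_{\R^d}(\int_{B(0,n)}g\,dy)^p dx\to\infty$. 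I expect the main obstacle to be packaging this $L^p$ contradiction cleanly: one must argue that $L^p$-invariance automatically upgrades to $L^p$-boundedness (via the closed graph theorem, legitimate since $G(t,s)$ is a positivity-preserving integral operator and hence has closed graph on $L^p$), and then the quantitative clash between the uniform positivity of $G(t,s)\one$ and the necessarily sublinear growth of $\|G(t,s)\chi_{B(0,n)}\|_{L^p}^p$ versus the linear growth forced from below by \eqref{l_bound} closes the argument.
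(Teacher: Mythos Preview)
Your $C_0(\R^d)$ argument is correct and essentially identical to the paper's: both use compactness to upgrade the pointwise convergence $G(t,s)\psi_n\to G(t,s)\one$ to uniform convergence, and then contradict the uniform lower bound \eqref{l_bound}.

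Your $L^p(\R^d)$ argument, however, has a genuine gap. Even granting that $L^p$-invariance implies $L^p$-boundedness via the closed graph theorem (which itself needs justification: one must first explain why the integral defining $G(t,s)f$ converges for arbitrary $f\in L^p$, and why the graph is closed), the contradiction you are chasing never materializes. Boundedness gives $\|G(t,s)\chi_{B(0,n)}\|_{L^p}^p\le M^p|B(0,n)|$, and monotone convergence gives $\|G(t,s)\chi_{B(0,n)}\|_{L^p}^p\to+\infty$; these are perfectly compatible, since both sides grow without bound. Your last attempt, normalizing to $|B(0,n)|^{-1/p}\chi_{B(0,n)}$, asks whether $|B(0,n)|^{-1}\int_{\R^d}(G(t,s)\chi_{B(0,n)})^p\,dx\to\infty$, but from the pointwise lower bound on $G(t,s)\one$ alone this does not follow: you only know the numerator diverges, not that it diverges faster than $|B(0,n)|$.

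The paper's route is much shorter and avoids all of this. Since $G(t,s)$ is compact, Proposition~\ref{tight_equiv_comp} gives tightness: choose $R>0$ with $g_{t,s}(x,\R^d\setminus B(0,R))\le K/2$ for all $x$, where $K=C_{d_1,d_2}$. Then
\[
(G(t,s)\chi_{B(0,R)})(x)=g_{t,s}(x,\R^d)-g_{t,s}(x,\R^d\setminus B(0,R))\ge K-\tfrac{K}{2}=\tfrac{K}{2}
\]
for every $x\in\R^d$, so $G(t,s)\chi_{B(0,R)}\notin L^p(\R^d)$ while $\chi_{B(0,R)}\in L^p(\R^d)$. This exhibits a single function witnessing the failure, with no operator-norm argument needed. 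Note, incidentally, that you had already done the work for this in your $C_0$ step: the uniform convergence $G(t,s)\psi_n\to G(t,s)\one$ that you established there immediately yields, for large $n$, $G(t,s)\psi_n\ge K/2$ everywhere, so $\psi_n\in C_c(\R^d)\subset L^p(\R^d)$ but $G(t,s)\psi_n\notin L^p(\R^d)$.
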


\begin{proof}
Let $(f_n)$ be a sequence of smooth functions such that $\chi_{B(0,n)}\le f_n\le\chi_{B(0,2n)}$ for any $n\in\N$,
and fix $s,t\in I$ with $s<t$. From formula \eqref{repres-formula} and the dominated convergence theorem, it
follows immediately that $G(t,s)f_n$ converges pointwise in $\R^d$ to $G(t,s)\one$ as $n\to +\infty$. Since $G(t,s)$ is
a compact operator, $G(t,s)f_n$ actually converges uniformly in $\R^d$ to $G(t,s)\one$. Since $G(t,s)$ is
bounded in $C_b(\R^d)$, if it preserved $C_0(\R^d)$ the function $G(t,s)\one$ would tend to $0$ as $|x|\to +\infty$, but this is not
the case. Indeed, formula \eqref{l_bound} shows that $G(t,s)\one$ is bounded from below by a positive constant.

To prove that $G(t,s)$ does not preserve $L^p(\Rd)$, we denote by $K$ any positive constant such that
$g_{t,s}(x,\R^d)\ge K$ for any $x\in\R^d$. By Proposition \ref{tight_equiv_comp}, we can fix $R>0$ such that
$g_{t,s}(x, \R^d\setminus B(0,R))\le K/2$. By difference it follows that
\begin{eqnarray*}
(G(t,s)\chi_{B(0,R)})(x)=g_{t,s}(x,\Rd)-g_{t,s}(x,\R^d\setminus B(0,R))\ge \frac{K}{2},\qquad\;\,x\in\Rd.
\end{eqnarray*}
Hence, $G(t,s)\chi_{B(0,R)}$ does not belong to $L^p(\Rd)$.
\end{proof}

\subsection{An extension of Corollary \ref{cont-c_0} to $C_b(\Rd)$}

An insight in the proof of Theorem \ref{thm-comp} shows that, if $c\geq 0$ and
\begin{equation}\label{M}
(\mathcal{A}(t)\varphi)(x)\leq -h(\varphi(x)),\qquad\;\, t\in J,\;\, |x|\geq R,
\end{equation}
for some interval $J\subset I$ and some $R>0$, then
\begin{equation}\label{M12}
M_{J,\rho,\delta}=\sup_{{(t,s)\in\Lambda_J, t-s>\delta}\atop{|x|\leq \rho}}(G(t,s)\varphi)(x)<+\infty,
\end{equation}
for any $\delta,\rho>0$.

Actually, as in \cite{KunLorLun09Non}, slightly modifying the proof, we can improve \eqref{M12}, removing the
condition $t-s\ge\delta$. For this purpose, in fact, we just need a weaker assumption than \eqref{M}.
More precisely we will assume that the following hypothesis is satisfied.
\begin{hyp}
\label{hyp4bis}
For every bounded interval $J\subset I$ there exist a positive function
$\varphi=\varphi_J \in C^2(\R^d)$ diverging to $+\infty$ as $|x|\to +\infty$ and a positive
constant $M_J$ such that
$$(\mathcal{A}(t)\varphi)(x)\leq M_J, \qquad\;\, (t,x)\in J\times \Rd.$$
\end{hyp}

\begin{prop}\label{slight_hyp}
Let $c\geq 0$ and assume that Hypothesis $\ref{hyp4bis}$ holds.
Then $G(\cdot,\cdot)\varphi$ is bounded in $\Lambda_J\times B(0,\rho)$ for every $\rho>0$.
\end{prop}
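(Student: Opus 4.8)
The goal is to bound $(G(t,s)\varphi)(x)$ uniformly for $(t,s)\in\Lambda_J$ and $x\in B(0,\rho)$, where now $\varphi$ satisfies only $\mathcal{A}(t)\varphi\le M_J$ on $J\times\Rd$ rather than the stronger dissipativity estimate \eqref{M}. The idea is to mimic the first part of the proof of Theorem \ref{thm-comp}: first establish that $(G(t,s)\varphi)(x)$ is finite for every $(t,s)\in\Lambda_J$ and $x\in\Rd$, and then use the integral inequality \eqref{est_1} (equivalently Lemma \ref{lemma_luca_luciana}(ii) applied to the truncations $\varphi_n=\psi_n\circ\varphi$) to control the growth of $(G(t,\cdot)\varphi)(x)$ in the backward-time variable by the size of $(G(t,\cdot)\mathcal{A}(\cdot)\varphi)(x)$, which is now bounded above by $M_J$.

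\textbf{Key steps.} First I would check that Hypothesis \ref{hyp4bis} implies Hypothesis \ref{hyp1-bis} on bounded intervals, so that Lemma \ref{lemma_luca_luciana}(ii) is applicable: indeed, if $\mathcal{A}(t)\varphi\le M_J$ and $c\ge 0$, then $\mathcal{A}_{-c}(t)\varphi=\mathcal{A}(t)\varphi+c\varphi\le M_J+c\varphi$, but what is actually needed is a \emph{Lyapunov-type} bound; one takes $\lambda_J$ large enough (using that $\varphi$ is bounded below by a positive constant, after adding a constant if necessary) so that $\mathcal{A}_{-c}(t)\varphi-\lambda_J\varphi\le M_J-\lambda_J\varphi\le 0$ for $|x|$ large, and adjusts near the origin. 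Next, exactly as in the proof of Theorem \ref{thm-comp}, I would introduce the concave truncations $\psi_n$ with $\psi_n(r)=r$ on $[0,n]$, $\psi_n$ constant for $r\ge n+1$, $0\le\psi_n'\le 1$, $\psi_n''\le 0$, set $\varphi_n=\psi_n\circ\varphi\in C^2_b(\Rd)$ constant outside a compact set, and apply Lemma \ref{lemma_luca_luciana}(ii) to obtain
\begin{align*}
(G(t,s)\varphi_n)(x)\le\varphi_n(x)-\int_s^t(G(t,\sigma)\mathcal{A}(\sigma)\varphi_n)(x)\,d\sigma.
\end{align*}
Since $\mathcal{A}(\sigma)\varphi_n=\psi_n'(\varphi)\mathcal{A}(\sigma)\varphi+\psi_n''(\varphi)\langle QD\varphi,D\varphi\rangle-c(\psi_n'(\varphi)\varphi-\varphi_n)$, the Hessian term is $\le 0$, the potential term is $\le 0$ by $\psi_n'(r)r\le\psi_n(r)$ and $c\ge 0$, and $\psi_n'(\varphi)\mathcal{A}(\sigma)\varphi\le\psi_n'(\varphi)^+M_J\le M_J$ on the set where $\mathcal{A}(\sigma)\varphi>0$; on the complementary set one argues as in Theorem \ref{thm-comp} that the negative part has a finite integral, letting $n\to\infty$ by monotone convergence. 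This yields
\begin{align*}
(G(t,s)\varphi)(x)\le\varphi(x)+M_J\,(t-s)\le\varphi(x)+M_J\,|J|,
\end{align*}
and taking the supremum over $x\in B(0,\rho)$ and $(t,s)\in\Lambda_J$ gives the assertion with bound $\max_{|x|\le\rho}\varphi(x)+M_J\,|J|$.

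\textbf{Main obstacle.} The delicate point is the same one that appears in Theorem \ref{thm-comp}: justifying the passage to the limit $n\to\infty$ inside $\int_s^t(G(t,\sigma)\mathcal{A}(\sigma)\varphi_n)(x)\,d\sigma$ and, as a prerequisite, knowing that $(G(t,s)\varphi)(x)<+\infty$. One cannot invoke \eqref{M} here, so one must split $\mathcal{A}(\sigma)\varphi$ into its positive and negative parts, use monotone convergence on the positive part (which is now dominated by the constant $M_J$ rather than by an equibounded family, so finiteness of $\int_s^t(G(t,\sigma)(\mathcal{A}(\sigma)\varphi)^+)(x)\,d\sigma\le M_J|J|\,g_{t,s}(x,\Rd)^{\mathrm{ess\,sup}}<\infty$ is immediate), and then deduce finiteness of the negative part from the inequality itself, exactly as in the argument following \eqref{rhside}. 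Once finiteness is in hand, Fatou/monotone convergence delivers \eqref{est_1} for $\varphi$ and the explicit bound follows; the uniformity in $x$ and $(t,s)$ is then automatic because the right-hand side $\varphi(x)+M_J(t-s)$ is controlled on $B(0,\rho)\times\Lambda_J$.
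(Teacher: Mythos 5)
Your overall strategy --- applying Lemma \ref{lemma_luca_luciana}(ii) to the truncations $\varphi_n=\psi_n\circ\varphi$, discarding the Hessian and potential terms via $\psi_n''\le 0$, $\psi_n'(r)r\le\psi_n(r)$ and $c\ge 0$, and landing on $(G(t,s)\varphi)(x)\le\varphi(x)+M_J(t-s)$ --- is the paper's, and the final bound is correct.

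There is, however, a real error in your attempted verification that Hypothesis \ref{hyp4bis} implies Hypothesis \ref{hyp1-bis}. You write $\mathcal{A}_{-c}(t)\varphi-\lambda_J\varphi\le M_J-\lambda_J\varphi$, which would require $\mathcal{A}_{-c}(t)\varphi\le M_J$; but Hypothesis \ref{hyp4bis} only gives $\mathcal{A}(t)\varphi\le M_J$, and $\mathcal{A}_{-c}(t)\varphi=\mathcal{A}(t)\varphi+c\varphi\le M_J+c\varphi$. When $c$ is unbounded this is $M_J+(c-\lambda_J)\varphi$ after subtracting $\lambda_J\varphi$, which cannot be made $\le 0$ on all of $\Rd$ for any fixed $\lambda_J$. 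So the claimed implication does not hold in general; the paper's statement of Proposition \ref{slight_hyp} apparently retains Hypothesis \ref{hyp1-bis} tacitly (as do Propositions \ref{tight-loc} and \ref{prop3.6KLL} in the same subsection), and that is what licenses the use of formula \eqref{est1}. Your fix does not repair this.

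The second point is not an error but an avoidable detour: the $A_+(\sigma)/A_-(\sigma)$ decomposition and the separate finiteness argument you import from Theorem \ref{thm-comp} are not needed here. Once \eqref{est1} is in hand, the pointwise bounds $0\le\psi_n'(\varphi)\le 1$ and $\mathcal{A}(\sigma)\varphi\le M_J$ on all of $J\times\Rd$ (the full strength of Hypothesis \ref{hyp4bis}, stronger than a bound on the positive part alone) give directly, for each fixed $n$,
\[
\varphi_n(x)-(G(t,s)\varphi_n)(x) \ge -M_J\int_s^t d\sigma\int_{\Rd}\psi_n'(\varphi(y))\,g_{t,\sigma}(x,dy)\ge -M_J(t-s),
\]
the last step using $g_{t,\sigma}(x,\Rd)\le 1$ for $c\ge 0$. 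Rearranging, $(G(t,s)\varphi_n)(x)\le\varphi_n(x)+M_J(t-s)\le\varphi(x)+M_J(t-s)$, and monotone convergence on the left as $n\to+\infty$ gives the claim, with finiteness of $(G(t,s)\varphi)(x)$ obtained for free. This is exactly the paper's argument, and it is considerably shorter than what you propose.
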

\begin{proof}
We can repeat the proof of Theorem \ref{thm-comp} until formula \eqref{est1}, so that we have
\begin{align*}
 \varphi_n(x)-(G(t,s)\varphi_n)(x) \geq -\int_s^t d\sigma\int_{\R^d}\psi_n'(\varphi(y))(\mathcal{A}(\sigma)\varphi)(y)g_{t,\sigma}(x,dy),
\end{align*}
for any $(t,s)\in \Lambda_J$ and any $x\in \Rd$.
Since $(\psi_n'(\varphi))(y)$ is nonnegative using the assumptions we get
\begin{align}
\label{est11}
\varphi_n(x)-(G(t,s)\varphi_n)(x) \geq -M_J\int_s^t d\sigma\int_{\R^d}\psi_n'(\varphi(y))g_{t,\sigma}(x,dy).
\end{align}
Letting $n\to +\infty$ in \eqref{est11} we get
\begin{eqnarray*}
(G(t,s)\varphi)(x)\leq \varphi(x)+M_J(t-s),
\end{eqnarray*}
for any $s,t\in J$, such that $s\leq t$, and any $x\in \R^d$. The claim follows.
\end{proof}

Let us now give the definition of tightness for a one-parameter family of Borel measures. We stress that in the particular case of probability measures
our definition agrees with the classical one.

\begin{defi}
Let ${\mathcal F}=\{\mu_s: s\in F\}$ be a family of finite Borel measures on $\Rd$. We say that ${\mathcal F}$ is tight if,
for any $\varepsilon>0$, there exists $M>0$ such that $\mu_t(\Rd\setminus B(0,M))\le\varepsilon$ for any $t\in F$.
\end{defi}

As a consequence of Proposition \ref{slight_hyp} we obtain the following result.

\begin{prop}\label{tight-loc}
Assume that Hypotheses $\ref{hyp1-bis}$ and $\ref{hyp4bis}$ hold. Then,
for every bounded interval $J\subset I$ and every $R>0$, the family of measures
$\{g_{t,s}(x,dy):\,(t,s,x)\in \Lambda_J\times\overline{B(0,R)}\}$ is tight.
\end{prop}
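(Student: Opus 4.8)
The plan is to obtain tightness as a direct consequence of the uniform bound on $G(\cdot,\cdot)\varphi$ furnished by Proposition \ref{slight_hyp}, combined with a Chebyshev-type inequality, after first reducing to the case of a nonnegative potential.

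\emph{Reduction to the case $c\ge 0$.} If $c_0=\inf_{I\times\Rd}c$ is negative, I would pass, exactly as in the proofs of Proposition \ref{notC_0} and Theorem \ref{thm-comp}, to the operator $\mathcal{A}_{-c_0}=\mathcal{A}+c_0I$, whose potential $c-c_0$ is nonnegative and whose associated evolution operator is $P(t,s)=e^{c_0(t-s)}G(t,s)$; consequently its Green function is $p(t,s,x,y)=e^{c_0(t-s)}g(t,s,x,y)$, so that
\[
g_{t,s}(x,\Rd\setminus B(0,M))=e^{-c_0(t-s)}\,p_{t,s}(x,\Rd\setminus B(0,M)).
\]
Since $(t,s)\in\Lambda_J$ with $J$ bounded implies $0\le t-s\le\ell_J$ for a suitable $\ell_J>0$, the prefactor $e^{-c_0(t-s)}$ is bounded on $\Lambda_J$, so tightness of $\{p_{t,s}(x,dy):(t,s,x)\in\Lambda_J\times\overline{B(0,R)}\}$ would immediately give that of $\{g_{t,s}(x,dy)\}$ on the same set. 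One checks readily that $\mathcal{A}_{-c_0}$ inherits Hypotheses \ref{hyp1-bis} and \ref{hyp4bis} (with the same $\varphi_J$, $\lambda_J$ and $M_J$, using $c_0\le 0$ and $\varphi_J>0$), so from now on I may assume $c\ge 0$.

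\emph{The case $c\ge 0$.} Fix a bounded interval $J\subset I$ and $R>0$, and let $\varphi=\varphi_J$ be the function in Hypothesis \ref{hyp4bis}. Proposition \ref{slight_hyp}, and more precisely its proof, gives
\[
(G(t,s)\varphi)(x)\le\varphi(x)+M_J(t-s)\le C_{J,R}:=\max_{\overline{B(0,R)}}\varphi+M_J\ell_J,
\]
for every $(t,s,x)\in\Lambda_J\times\overline{B(0,R)}$ (on the diagonal $t=s$ there is nothing to prove, since $g_{t,t}(x,dy)=\delta_x$). Since $\varphi$ is positive, for every $M>0$ and every such $(t,s,x)$ I would then estimate, using the representation formula \eqref{repres-formula} and \eqref{kernel},
\[
g_{t,s}(x,\Rd\setminus B(0,M))\le\frac{1}{\inf_{|y|\ge M}\varphi(y)}\int_{\Rd\setminus B(0,M)}\varphi(y)\,g_{t,s}(x,dy)\le\frac{(G(t,s)\varphi)(x)}{\inf_{|y|\ge M}\varphi(y)}\le\frac{C_{J,R}}{\inf_{|y|\ge M}\varphi(y)}.
\]
Because $\varphi(y)\to+\infty$ as $|y|\to+\infty$, the denominator tends to $+\infty$ as $M\to+\infty$; hence, given $\varepsilon>0$, it suffices to take $M$ large enough to make the right-hand side $\le\varepsilon$ uniformly in $(t,s,x)\in\Lambda_J\times\overline{B(0,R)}$. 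This is precisely the tightness of the family, and completes the argument.

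I do not expect a genuine obstacle here: the statement is essentially a corollary of Proposition \ref{slight_hyp}. The only points that deserve a little attention are the verification that the passage to $\mathcal{A}_{-c_0}$ preserves Hypotheses \ref{hyp1-bis} and \ref{hyp4bis} and keeps the exponential factor under control on $\Lambda_J$, and the observation that the bound extracted from Proposition \ref{slight_hyp} is uniform over the \emph{closed} ball $\overline{B(0,R)}$ and over all of $\Lambda_J$ (including the diagonal) — both of which are immediate.
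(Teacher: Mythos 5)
Your argument is correct and follows the same two-step route as the paper: reduce to $c\ge 0$ via the operator $\mathcal{A}_{-c_0}$ and the damped evolution operator $P(t,s)=e^{c_0(t-s)}G(t,s)$, then for $c\ge 0$ derive tightness from the uniform bound of Proposition~\ref{slight_hyp} by the Chebyshev estimate $g_{t,s}(x,\Rd\setminus B(0,M))\le (G(t,s)\varphi)(x)/\inf_{|y|\ge M}\varphi(y)$. The paper delegates the $c\ge 0$ case to \cite[Lemma 3.5]{KunLorLun09Non}; you supply that step explicitly, using exactly the argument that appears in the proof of Theorem~\ref{thm-comp}, so the content is the same.
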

\begin{proof}
In the case when $c\geq 0$ the proof is similar to that of \cite[Lemma 3.5]{KunLorLun09Non}.\\
If $c_0<0$ we can consider the evolution operator $(P(t,s))=(e^{c_0(t-s)}G(t,s))$ associated with the
elliptic operator $\mathcal{A}_{-c_0}(t)$, whose potential term is
nonpositive and satisfies Hypotheses \ref{hyp1-bis} and \ref{hyp4bis}. Then, the family of measures
$\{p_{t,s}(x,dy):\,(t,s,x)\in \Lambda_J\times \overline{B(0,R)}\}$ associated with $P(t,s)$ satisfies the claim as well as the family
$\{g_{t,s}(x,dy):\,(t,s,x)\in \Lambda_J\times \overline{B(0,R)}\}$ since $p_{t,s}(x,dy)=e^{c_0(t-s)}g_{t,s}(x,dy)$ for every $(t,s,x)\in\Lambda\times\R^d$.
\end{proof}

The following result allows us to extend
the continuity property of the function $G(\cdot,\cdot)f$, stated in
Corollary \ref{cont-c_0} for $f\in C_0(\R^d)$, to the case when $f$ is merely bounded and continuous in $\R^d$.

\begin{prop}\label{prop3.6KLL}
Assume that Hypotheses $\ref{hyp1-bis}$ and $\ref{hyp4bis}$ hold. Let $(f_n)\subset C_b(\Rd)$ be a bounded
sequence converging to $f\in C_b(\Rd)$ locally uniformly in $\R^d$. Then, $G(\cdot,\cdot)f_n$ converges to $G(\cdot,\cdot)f$ locally uniformly in $\Lambda\times\R^d$.
\end{prop}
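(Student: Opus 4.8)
The plan is to reduce everything to the uniform tightness of the Green measures, which Proposition \ref{tight-loc} already supplies, together with a routine truncation of the integral in the representation formula \eqref{repres-formula}. First I would fix a compact set $K\subset\Lambda\times\R^d$ and observe that local uniform convergence in $\Lambda\times\R^d$ is exactly uniform convergence on every such $K$; so the goal becomes
\begin{equation*}
\delta_n:=\sup_{(t,s,x)\in K}|(G(t,s)f_n)(x)-(G(t,s)f)(x)|\longrightarrow 0\qquad\text{as }n\to+\infty.
\end{equation*}
Since $K$ is compact, its projection onto the first two variables is a bounded subset of $I\times I$, hence contained in $J\times J$ for some bounded interval $J\subset I$, and its projection onto the last variable is contained in $\overline{B(0,R_0)}$ for some $R_0>0$; thus $K\subset\Lambda_J\times\overline{B(0,R_0)}$. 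I would also set $M:=\sup_{n\in\N}\|f_n\|_{\infty}$, which is finite by hypothesis and dominates $\|f\|_{\infty}$ as well.

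Next, for $(t,s,x)\in K$ and $R>0$ I would split the integral in \eqref{repres-formula} according to $B(0,R)$ and its complement, obtaining
\begin{align*}
|(G(t,s)f_n)(x)-(G(t,s)f)(x)|&\le\int_{\R^d}|f_n(y)-f(y)|\,g_{t,s}(x,dy)\\
&\le C_J\sup_{|y|\le R}|f_n(y)-f(y)|+2M\,g_{t,s}(x,\R^d\setminus B(0,R)),
\end{align*}
where $C_J:=\sup_{(t,s)\in\Lambda_J}e^{-c_0(t-s)}<+\infty$ (the interval $J$ being bounded) bounds $g_{t,s}(x,\R^d)$ from above, by \eqref{1_est}. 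Given $\varepsilon>0$, I would invoke Proposition \ref{tight-loc}, whose assumptions are precisely Hypotheses \ref{hyp1-bis} and \ref{hyp4bis}, to find $R>0$ such that $g_{t,s}(x,\R^d\setminus B(0,R))\le\varepsilon$ for every $(t,s,x)\in\Lambda_J\times\overline{B(0,R_0)}$, in particular for every $(t,s,x)\in K$. Keeping this $R$ fixed, the local uniform convergence of $(f_n)$ to $f$ forces the first term above to vanish as $n\to+\infty$, so that $\limsup_{n\to+\infty}\delta_n\le 2M\varepsilon$; since $\varepsilon>0$ is arbitrary, $\delta_n\to 0$, which is the assertion.

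The step I expect to carry the real content is the uniform tightness of the family $\{g_{t,s}(x,dy)\}$ over the relevant compact set of parameters; but this has already been isolated as Proposition \ref{tight-loc}, so within the present proof it is simply quoted. Everything else — the truncation, the $L^1$ bound on the measures coming from \eqref{1_est}, and the passage to the limit — is routine, so I do not anticipate any genuine difficulty here.
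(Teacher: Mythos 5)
Your proof is correct, and it follows the same route the paper intends: the paper's one-line proof simply refers to the argument of \cite[Proposition 3.6]{KunLorLun09Non} combined with Proposition \ref{tight-loc}, and that argument is exactly the tightness-plus-truncation of the representation formula you have written out explicitly. Nothing is missing.
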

\begin{proof}
The proof can be obtained as the proof of \cite[Proposition 3.6]{KunLorLun09Non}, taking Proposition \ref{tight-loc} into account.
\end{proof}

\begin{thm}
Under the assumptions of Proposition $\ref{prop3.6KLL}$,
the function $G(\cdot,\cdot)f$ is continuous in $\Lambda\times\R^d$, for every $f\in C_b(\R^d)$.
\end{thm}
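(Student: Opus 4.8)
The plan is to combine the local-uniform continuity already available for functions vanishing at infinity (Corollary \ref{cont-c_0}) with the approximation result just proved (Proposition \ref{prop3.6KLL}), exactly in the spirit of the analogous step in \cite{KunLorLun09Non}. First I would fix $f\in C_b(\R^d)$ and a bounded sequence $(f_n)\subset C_c^\infty(\R^d)$ — or at least $(f_n)\subset C_0(\R^d)$ — converging to $f$ locally uniformly in $\R^d$ with $\sup_n\|f_n\|_\infty<+\infty$; such a sequence is readily constructed by multiplying $f$ by a family of cut-off functions. For each $n$, Corollary \ref{cont-c_0}(ii) guarantees that $(t,s,x)\mapsto (G(t,s)f_n)(x)$ is continuous on $\Lambda\times\R^d$.

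Next, by Proposition \ref{prop3.6KLL} the functions $G(\cdot,\cdot)f_n$ converge to $G(\cdot,\cdot)f$ locally uniformly in $\Lambda\times\R^d$. Since a locally uniform limit of continuous functions is continuous, it follows at once that $G(\cdot,\cdot)f$ is continuous on $\Lambda\times\R^d$. More concretely, to verify continuity at a point $(t_0,s_0,x_0)\in\Lambda\times\R^d$ one picks a compact neighborhood $K\subset\Lambda\times\R^d$ of that point, chooses $n$ so large that $\sup_K|(G(\cdot,\cdot)f)-(G(\cdot,\cdot)f_n)|<\varepsilon/3$, and then uses the continuity of $G(\cdot,\cdot)f_n$ to control $|(G(t,s)f_n)(x)-(G(t_0,s_0)f_n)(x_0)|<\varepsilon/3$ for $(t,s,x)$ near $(t_0,s_0,x_0)$; the triangle inequality closes the argument.

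There is essentially no serious obstacle here: the two ingredients have been set up precisely so that this theorem becomes a short corollary. The only point requiring a little care is the construction of the approximating sequence $(f_n)$ so that it is \emph{bounded} in the sup-norm and converges to $f$ \emph{locally uniformly} (not merely pointwise), which is exactly the hypothesis of Proposition \ref{prop3.6KLL}; this is handled by the standard device of taking $f_n=\zeta_n f$ with $\zeta_n\in C_c^\infty(\R^d)$, $\chi_{B(0,n)}\le\zeta_n\le\chi_{B(0,2n)}$, so that $\|f_n\|_\infty\le\|f\|_\infty$ and $f_n\equiv f$ on $B(0,n)$. Hence I expect the whole proof to fit in a few lines.
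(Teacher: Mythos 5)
Your proof is correct and matches the paper's argument essentially step for step: approximate $f$ by a bounded sequence of compactly supported smooth functions converging locally uniformly, invoke Corollary \ref{cont-c_0} for the continuity of each $G(\cdot,\cdot)f_n$ and Proposition \ref{prop3.6KLL} for the locally uniform convergence $G(\cdot,\cdot)f_n\to G(\cdot,\cdot)f$, and conclude by the standard fact that a locally uniform limit of continuous functions is continuous. No issues.
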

\begin{proof}
Let $f\in C_b(\R^d)$, by Proposition \ref{prop3.6KLL} we can find a sequence
of bounded functions $f_n\in C^\infty_c(\R^d)$ converging to $f$ locally uniformly in $\R^d$
such that $G(\cdot,\cdot)f_n$ converges to $G(\cdot,\cdot)f$ locally uniformly.
Since any function $G(\cdot,\cdot)f_n$ is continuous in $\Lambda\times\R^d$ for any $n\in\mathbb N$ by Corollary
\ref{cont-c_0}, the assertion follows at once.
\end{proof}

\section{Invariance of $C_0(\R^d)$ and $L^p(\R^d)$}
\label{sect-5}
In Subsection \ref{subsect-consequences} we have obtained some conditions which imply that neither $C_0(\R^d)$ nor
$L^p(\R^d)$ is preserved by $G(t,s)$. Here, we provide sufficient conditions for $C_0(\R^d)$ and $L^p(\R^d)$ be preserved
by $G(t,s)$.

\subsection{Invariance of $C_0(\Rd)$}
\begin{prop}
Fix $a,b\in I$ such that $a<b$.
Assume that there exist a strictly positive function $V\in C^2(\R^d)$
and $\lambda_0>0$ such that $\lim_{|x|\to+\infty}V(x)=0$ and
$\lambda_0 V(x)-\mathcal{A}(t)V(x)\geq 0$ for every $(t,x)\in [a,b]\times \R^d$. Then,
$G(t,s)$ preserves $C_0(\R^d)$ for any $(t,s)\in\Lambda_{[a,b]}$.
\end{prop}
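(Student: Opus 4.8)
The plan is to use the function $V$ as a Lyapunov-type barrier that controls the mass of the kernels $g_{t,s}(x,dy)$ at spatial infinity, exactly the way $\varphi$ was used in the proof of Theorem~\ref{thm-comp}, but now in the opposite direction: instead of forcing tightness, I want to show that $G(t,s)f$ inherits the decay of $f$ at infinity. The first step is to reduce to a dense class: it suffices to prove $G(t,s)f\in C_0(\Rd)$ for $f\in C_c(\Rd)$, since $G(t,s)$ is bounded on $C_b(\Rd)$ by \eqref{stima-sol}, $C_0(\Rd)$ is closed in $C_b(\Rd)$, and $C_c(\Rd)$ is dense in $C_0(\Rd)$. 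Moreover $G(t,s)f$ is already known to be continuous (it belongs to $C^{2+\alpha}_{\rm loc}(\Rd)$ by Proposition~\ref{greenkernel}(iii)), so the only thing to establish is that $(G(t,s)f)(x)\to 0$ as $|x|\to+\infty$.

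\textbf{Main estimate.} The key step is to show that $(G(t,s)V)(x)\le e^{\lambda_0(t-s)}V(x)$ for $(t,s)\in\Lambda_{[a,b]}$ and $x\in\Rd$; in particular $(G(t,s)V)(x)\to 0$ as $|x|\to+\infty$, uniformly for $(t,s)$ in the (compact) parameter set. To prove this I would run the same truncation argument used for $\varphi$ in Theorem~\ref{thm-comp}: since $V$ may not be bounded away from a compact set in the way needed to apply Lemma~\ref{lemma_luca_luciana} directly, I compose $V$ with the cut-off functions $\psi_n$ (so that $\psi_n\circ V$ is bounded, $C^2$, and constant outside a compact set, after possibly replacing $V$ by $V+$const to make it bounded below by a positive constant — note $V$ is already positive and vanishing at infinity, hence bounded), apply Lemma~\ref{lemma_luca_luciana}(i) (or (ii)) to $\psi_n\circ V$, use the concavity of $\psi_n$ together with the dissipativity inequality $\mathcal{A}(t)V\le\lambda_0 V$ to control $\mathcal{A}(t)(\psi_n\circ V)$ from above by $\lambda_0\,\psi_n'(V)\,V\le\lambda_0\,\psi_n\circ V$, and then pass to the limit $n\to+\infty$ via monotone convergence to obtain the differential inequality $\frac{d}{ds}(G(t,s)V)(x)\ge -\lambda_0 (G(t,s)V)(x)$, i.e. $s\mapsto e^{\lambda_0 s}(G(t,s)V)(x)$ is nondecreasing on $I_t\cap[a,b]$. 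Evaluating at $s$ and $t$ and using $G(t,t)=\mathrm{id}$ gives $(G(t,s)V)(x)\le e^{\lambda_0(t-s)}V(x)$. An alternative and perhaps cleaner route is a direct maximum-principle argument on $z(\sigma,x):=e^{\lambda_0(\sigma-s)}V(x)-(G(\sigma,s)V)(x)$ (suitably approximated by the $\psi_n\circ V$), which is nonnegative at $\sigma=s$ and satisfies $D_\sigma z-\mathcal{A}(\sigma)z\ge 0$, hence $z\ge 0$ by Proposition~\ref{primax}; I would lean toward whichever of these fits the paper's conventions, but the truncation step is common to both.

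\textbf{Conclusion.} Given $f\in C_c(\Rd)$, choose a constant $C>0$ with $|f|\le C\,\chi_{\supp f}\le C\,V/\min_{\supp f}V =: C' V$ (here $\min_{\supp f}V>0$ since $V$ is strictly positive and $\supp f$ is compact). By positivity of $G(t,s)$ and the main estimate, $|(G(t,s)f)(x)|\le C'(G(t,s)V)(x)\le C' e^{\lambda_0(t-s)}V(x)\to 0$ as $|x|\to+\infty$. Hence $G(t,s)f\in C_0(\Rd)$, and the density argument finishes the proof.

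\textbf{The main obstacle} is the truncation/limit step producing the estimate $(G(t,s)V)(x)\le e^{\lambda_0(t-s)}V(x)$: one must carefully handle the extra terms $\psi_n''(V)\langle QDV,DV\rangle$ and $c\,(\psi_n'(V)V-\psi_n\circ V)$ that appear when $\mathcal{A}(t)$ hits the composition $\psi_n\circ V$, checking that their signs go the right way (the first is $\le 0$ by concavity of $\psi_n$ and ellipticity; the second requires either $c\ge0$, as in the analogous parts of Lemma~\ref{lemma_luca_luciana}(ii), or a separate reduction to that case via $\mathcal{A}_{-c_0}$ and the operator $(P(t,s))=(e^{c_0(t-s)}G(t,s))$, noting that $V$ works for $\mathcal{A}_{-c_0}$ with $\lambda_0$ replaced by $\lambda_0-c_0$). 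Everything else — the reduction to $C_c$, the continuity of $G(t,s)f$, and the final domination — is routine.
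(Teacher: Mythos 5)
Your alternative route (the direct maximum-principle argument on $z(\sigma,x)=e^{\lambda_0(\sigma-s)}V(x)-(G(\sigma,s)V)(x)$, handled via Proposition~\ref{primax}) is sound and is essentially the paper's argument: the paper applies the same maximum principle, only directly to $e^{-\lambda_0(t-s)}(G(t,s)f)(x)-\delta^{-1}\|f\|_\infty V(x)$ for $f\in C_c(\Rd)$ nonnegative with $\supp f\subset B(0,R)$ and $\delta=\inf_{B(0,R)}V>0$, obtaining $0\le (G(t,s)f)(x)\le e^{\lambda_0(t-s)}\delta^{-1}\|f\|_\infty V(x)$ in a single step rather than first estimating $G(\cdot,s)V$ and then dominating $f$ by a multiple of $V$. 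In both variants no truncation is needed, since $V$ (continuous, strictly positive, vanishing at infinity) is already a bounded $C^2$ function, so $G(\cdot,s)V$ is the bounded classical solution of \eqref{NonA} with datum $V$ and Proposition~\ref{primax} applies directly.

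Your primary route, however, has a genuine gap. You want to invoke Lemma~\ref{lemma_luca_luciana}(ii) for $\psi_n\circ V$, but that lemma requires the test function to be constant and positive outside a ball. Since $V(x)\to 0$ as $|x|\to\infty$ and $\psi_n(r)=r$ for small $r$, the composition $\psi_n\circ V$ coincides with $V$ outside a compact set for every $n$ (indeed $\psi_n\circ V=V$ everywhere once $n\ge\sup V$); so it is never constant outside a ball, and the hypothesis of Lemma~\ref{lemma_luca_luciana}(ii) fails. Lemma~\ref{lemma_luca_luciana}(i) (compactly supported data) does not apply either, since $\psi_n\circ V>0$ everywhere. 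Adding a positive constant to $V$ does not repair this either, because $V+\mathrm{const}$ is still nonconstant at infinity. The $\psi_n$-truncation machinery from Theorem~\ref{thm-comp} is designed for Lyapunov functions blowing up at infinity and simply does not help with a $V$ that decays; discard that branch and keep only the maximum-principle argument.
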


\begin{proof}
Fix $s\in [a,b]$.
It suffices to prove the statement for $f\in C_c(\R^d)$ since we may approximate an arbitrary
$f\in C_0(\R^d)$ by a sequence $(f_n) \subset C_c(\R^d)$ with respect to the sup-norm in $\Rd$, and $G(t,s)f_n$
converges uniformly to $G(t,s)f$ for every $t\geq s$.
It is not restrictive to suppose $f\geq 0$ otherwise we consider its positive and negative part.
Fix $R>0$, assume that
$\supp f\subset B(0,R)$ and consider the unique bounded classical solution $u$ of the Cauchy problem \eqref{NonA}.
Let $\delta=\inf_{x\in B(0,R)}V(x)>0$ and $z(t,x)=e^{-\lambda_0(t-s)}u(t,x)-\delta^{-1}\Vert f
\Vert_{\infty}V(x)$. Then, the function $z \in C_b([s,b]\times \Rd)\cap C^{1,2}((s,b]\times \Rd)$
satisfies
\begin{displaymath}
\left\{
\begin{array}{ll}
D_tz(t,x)-\mathcal{A}_{\lambda_0}(t)z(t,x)\leq 0,\quad&(t,x)\in (s,b]\times \R^d,\\[1mm]
z(s,x)\leq 0,& x \in \R^d.
\end{array}
\right.
\end{displaymath}
Therefore, applying Proposition \ref{primax} (with $\mathcal{A}$ replaced with $\mathcal{A}_{\lambda_0}$)
we get $z\leq 0$, i.e.,
\begin{eqnarray*}
0\leq u(t,x) \leq e^{\lambda_0(t-s)}\delta^{-1}\Vert f\Vert_\infty V(x),\qquad\;\, s\le t\le b,\;\,x\in\R^d,
\end{eqnarray*}
which implies that $u\in C_0(\R^d)$.
\end{proof}

\subsection{Invariance of $L^p(\Rd)$}
We now study the invariance of $L^p(\R^d)$ under the action of the operator $G(t,s)$. For this purpose, besides Hypothesis \ref{hyp1}, we assume
the following additional assumption on the coefficients of the operator $\mathcal{A}$.

\begin{hyp}
\label{hyp-5.1}
The diffusion coefficients $q_{ij}$ $(i,j=1,\ldots,d)$ are continuously differentiable with respect to the spatial variables in
$[a,b]\times\R^d$ for some $[a,b]\subset I$.
\end{hyp}
\noindent
Let us define
\begin{equation}
\beta_i(t,x)=b_i(t,x)-\sum_{j=1}^dD_jq_{ij}(t,x),\quad (t,x)\in I\times\R^d,\;\,i=1,\ldots,d,
\label{coeff-beta}
\end{equation}
then the following result holds.
\begin{thm}
\label{thm:2}
Fix $a,b\in I$, with $a<b$.
Suppose that the the drift coefficients $b_j$ $(j=1,\ldots,d)$ are continuously differentiable in
$[a,b]\times\Rd$ and the  derivative $D_{ij}q_{ij}$ $(i,j=1,\ldots,d)$ exists in $[a,b]\times\Rd$.
Further, assume that there exists $K>0$ such that
\begin{equation}
c(t,x)+{\rm div}_x\beta(t,x)\geq -K,\qquad\;\,(t,x)\in [a,b]\times \R^d.
\label{cond-diverg}
\end{equation}
Then, for every $1\le p<+\infty$, $L^p(\R^d)$ is invariant under $G(t,s)$ for any $(t,s)\in\Lambda_{[a,b]}$.
Moreover,
\begin{equation}
\|G(t,s)f\|_{L^p(\Rd)}\le e^{K_p(t-s)}\|f\|_{L^p(\Rd)},\quad\;\,a\le s\le t\le b,
\label{stima-norma-op-Lp}
\end{equation}
where $pK_p=K-(p-1)c_0$.
\end{thm}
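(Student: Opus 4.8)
The plan is to establish \eqref{stima-norma-op-Lp} first for $f\in C^\infty_c(\R^d)$ and then to deduce the invariance of $L^p(\R^d)$, together with the estimate for a general $f$, by density. So, fix $f\in C^\infty_c(\R^d)$, let $n\in\mathbb N$ be so large that $\supp f\subset B(0,n)$ and set $u_n:=G_n^D(\cdot,s)f$. Since $f$ vanishes near $\partial B(0,n)$, the parabolic Schauder theory gives $u_n\in C^{1+\alpha/2,2+\alpha}([s,T]\times\overline{B(0,n)})$ for every $T>s$. The first step is to rewrite $\mathcal A(t)$ in divergence form: by Hypothesis \ref{hyp-5.1} and the definition \eqref{coeff-beta} of $\beta$,
\begin{equation*}
\mathcal A(t)\psi=\diver\big(Q(t,\cdot)\nabla\psi\big)+\langle\beta(t,\cdot),\nabla\psi\rangle-c(t,\cdot)\psi,
\end{equation*}
and, under the present assumptions, $\diver_x\beta(t,\cdot)=\sum_iD_ib_i(t,\cdot)-\sum_{i,j}D_{ij}q_{ij}(t,\cdot)$ is a well-defined continuous function on $[a,b]\times\R^d$.

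Next I would differentiate $t\mapsto\|u_n(t,\cdot)\|^p_{L^p(B(0,n))}$ (when $1\le p<2$, working with the convex regularization $\Phi_\varepsilon(r)=(r^2+\varepsilon^2)^{p/2}-\varepsilon^p$, which vanishes together with $\Phi_\varepsilon'$ at $r=0$ and increases to $|r|^p$ as $\varepsilon\to0^+$, and letting $\varepsilon\to0^+$ in the resulting integral inequality at the end). Differentiating under the integral sign, replacing $D_tu_n$ by $\mathcal A(t)u_n$ and integrating by parts over $B(0,n)$ — all boundary terms vanishing because $u_n=0$ on $\partial B(0,n)$ — one is led, using $\nabla(|r|^p)=p|r|^{p-2}r\,\nabla r$ and $p|r|^{p-2}r^2=p|r|^p$, to
\begin{align*}
\frac{d}{dt}\int_{B(0,n)}|u_n(t,x)|^p\,dx
&=-p(p-1)\int_{B(0,n)}|u_n|^{p-2}\langle Q(t,\cdot)\nabla u_n,\nabla u_n\rangle\,dx\\
&\quad-\int_{B(0,n)}\big(\diver_x\beta(t,\cdot)+p\,c(t,\cdot)\big)|u_n(t,x)|^p\,dx.
\end{align*}
The first integral is nonnegative by the ellipticity condition in Hypothesis \ref{hyp1}(iii) and may be discarded. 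For the second one, \eqref{cond-diverg} and $c\ge c_0$ give $\diver_x\beta+pc=(\diver_x\beta+c)+(p-1)c\ge-K+(p-1)c_0=-pK_p$, whence
\begin{equation*}
\frac{d}{dt}\int_{B(0,n)}|u_n(t,x)|^p\,dx\le pK_p\int_{B(0,n)}|u_n(t,x)|^p\,dx,\qquad\;\,s<t\le b.
\end{equation*}
Since $u_n(s,\cdot)=f$ and $\supp f\subset B(0,n)$, Gronwall's lemma yields $\|u_n(t,\cdot)\|_{L^p(B(0,n))}\le e^{K_p(t-s)}\|f\|_{L^p(\R^d)}$ for every $n\in\mathbb N$ and every $t\in[s,b]$.

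To finish, I would let $n\to+\infty$: by Theorem \ref{thm-1.3}(ii) the functions $u_n(t,\cdot)$ converge to $(G(t,s)f)(\cdot)$ pointwise in $\R^d$, so Fatou's lemma gives $\|G(t,s)f\|_{L^p(\R^d)}\le e^{K_p(t-s)}\|f\|_{L^p(\R^d)}$ for all $f\in C^\infty_c(\R^d)$ and all $a\le s\le t\le b$. Since $C^\infty_c(\R^d)$ is dense in $L^p(\R^d)$, this estimate shows that $G(t,s)$ extends uniquely to a bounded operator on $L^p(\R^d)$ satisfying \eqref{stima-norma-op-Lp}; the extension agrees with $G(t,s)$ on $C_b(\R^d)\cap L^p(\R^d)$, as one checks by approximating such an $f$ with $\vartheta(\cdot/n)f$ (suitably mollified), using \eqref{1_est} and dominated convergence to get pointwise convergence of the images and then extracting an a.e.\ convergent subsequence from the $L^p$-convergence. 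This is precisely the invariance of $L^p(\R^d)$ under $G(t,s)$ claimed in the statement.

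The main obstacle is the rigorous justification of the passage to divergence form and of the integration by parts — this is where Hypothesis \ref{hyp-5.1} and the $C^1$-regularity of the $b_j$'s enter — and, in the range $1\le p<2$, the handling of the regularization $\Phi_\varepsilon$ and of the limit $\varepsilon\to0^+$ (after which the clean terms $-p(p-1)|u_n|^{p-2}\langle Q\nabla u_n,\nabla u_n\rangle$ and $-(\diver_x\beta+pc)|u_n|^p$ are recovered). Both points are made routine by the smoothness of $u_n$ up to $\partial B(0,n)$ and up to $t=s$, the boundedness of $u_n$ on $[s,b]\times\overline{B(0,n)}$, and the finiteness of $|B(0,n)|$, but they ought to be spelled out; everything else is standard.
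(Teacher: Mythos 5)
Your proof is correct, and it reaches the same key inequality on $\diver_x\beta + pc$, but it uses a genuinely different regularization than the paper. The paper first reduces to \emph{nonnegative} $f\in C_c^\infty(\R^d)$ (extending afterwards via $|G(t,s)f|\le G(t,s)|f|$), works with $u_n^\varepsilon := u_n+\varepsilon$, and then must carry along nonvanishing boundary terms on $\partial B(0,n)$ of size $O(\varepsilon^{p-1})$ and $O(\varepsilon^p)$, together with an extra interior term $\varepsilon\int c(u_n^\varepsilon)^{p-1}$, all of which disappear only in the limit $\varepsilon\to 0^+$; for $p=1$ the paper does not argue directly at all, but instead writes the $L^p$ inequality for $p>1$ and lets $p\to 1^+$. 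Your choice $\Phi_\varepsilon(r)=(r^2+\varepsilon^2)^{p/2}-\varepsilon^p$ (used for $1\le p<2$; no regularization needed for $p\ge 2$) has $\Phi_\varepsilon(0)=\Phi_\varepsilon'(0)=0$, so all boundary integrals on $\partial B(0,n)$ vanish exactly, no sign assumption on $f$ is needed, no spurious $\varepsilon$-terms appear, and $p=1$ is handled by the same argument. The trade-off is that your route requires the (routine, but worth spelling out) passage $\varepsilon\to 0^+$ in the integrated inequality, using $u_n\Phi_\varepsilon'(u_n)\nearrow p|u_n|^p$ and $\Phi_\varepsilon(u_n)\to|u_n|^p$ with dominated/monotone convergence on the bounded set $B(0,n)$, whereas the paper's $\varepsilon\to 0^+$ step is essentially just killing the small prefactors. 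Everything else — the divergence-form rewriting via $\beta$, the pointwise bound $\diver_x\beta+pc\ge -pK_p$, Gronwall on the ball, Fatou as $n\to+\infty$, and the density extension — matches the paper's argument; your remark on why the $L^p$-extension of $G(t,s)$ agrees with the $C_b$-operator on $C_b\cap L^p$ is a useful clarification that the paper leaves implicit.
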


\begin{proof}
Fix $s\in [a,b]$. We prove the assertion
for nonnegative $f\in C_c^\infty(\R^d)$. The density of
$C_c^\infty(\R^d)$ in $L^p(\R^d)$ ($p\in [1,+\infty)$) combined with
the estimate $|G(t,s)f|\le G(t,s)|f|$ (see \eqref{repres-formula}) then allows us
to extend the result to any $f\in L^p(\Rd)$.

Let $u(t,x)=(G(t,s)f)(x)$ and, for any $n\in\N$, let $u_n=G_n^D(\cdot,s)f$ be the classical solution of the Cauchy-Dirichlet problem
\eqref{pb-approx-Dirichlet}. By Theorem \ref{thm-1.3}(ii), $u_n$ is nonnegative in $[s,+\infty)\times\R^d$ and therein converges to $u$ pointwise as $n\to +\infty$.

Let us prove that
\begin{equation}
\Vert u_n(t,\cdot)\Vert_{L^p(B(0,n))}\leq e^{K_p(t-s)}\Vert u_n(s,\cdot)\Vert_{L^p(B(0,n))}
=e^{K_p(t-s)}\Vert f\Vert_{L^p(B(0,n))},
\label{crucial}
\end{equation}
for any $t\in [s,b]$ and any $n\in\mathbb N$ such that ${\rm supp}(f)\subset B(0,n)$.
We first assume that $p\neq 1$ and set $u_n^\varepsilon=u_n+\varepsilon$. Then
\begin{align*}
\frac{d}{dt}\Vert u_n^\varepsilon(t,\cdot)\Vert_{L^p(B(0,n))}^p&=
p\int_{B(0,n)}(u_n^\varepsilon(t,\cdot))^{p-1}\mathcal{A}(t)u_n(t,\cdot)\,dx.
\end{align*}
Integrating by parts and using Hypotheses \ref{hyp1} and \ref{hyp-5.1}, we get
\begin{align}
&\int_{B(0,n)}(u_n^\varepsilon (t,\cdot))^{p-1}\mathcal{A}(t)u_n (t,\cdot)\,dx\notag\\ 
=&\varepsilon^{p-1}\int_{\partial B(0,n)}\langle Q(t,\cdot)\nabla_x u_n(t,\cdot),\nu\rangle\,dx\notag\\
&-(p-1)\int_{B(0,n)}\big( u_n^\varepsilon(t,\cdot)\big)^{p-2}\langle Q(t,\cdot)\nabla_xu_n(t,\cdot),\nabla_xu_n(t,\cdot)\rangle dx\notag\\
&+\frac{1}{p}\int_{B(0,n)}\langle
\beta(t,\cdot),\nabla_x(u^\varepsilon_n(t,\cdot))^p\rangle\,dx
-\int_{B(0,n)}c(t,\cdot)(u_n^\varepsilon (t,\cdot))^p\,dx\notag\\
&+\varepsilon\int_{B(0,n)}c(t,\cdot)(u_n^\varepsilon (t,\cdot))^{p-1}\,dx\notag\\
 \leq &\varepsilon^{p-1}\int_{\partial B(0,n)}\langle Q(t,\cdot)\nabla_xu_n(t,\cdot),\nu\rangle\,dx
 +\frac{\varepsilon^p}{p}\int_{\partial B(0,n)}\langle \beta(t,\cdot),\nu\rangle\,dx\notag\\
&- \frac{1}{p}\int_{B(0,n)}(u^\varepsilon_n(t,\cdot))^p \left (p\, c(t,\cdot)+\textrm{div}_x\beta(t,\cdot)\right )\,dx\notag\\
&+\varepsilon\int_{B(0,n)}c(t,\cdot)(u_n^\varepsilon (t,\cdot))^{p-1}\,dx.
\label{int-luglio-0}
\end{align}
We now observe that
\begin{align*}
p\, c(t,\cdot)+\textrm{div}_x\beta(t,\cdot)&=p\left (c(t,\cdot)-c_0\right )+pc_0+\textrm{div}_x\beta(t,\cdot)\\
&\ge \left (c(t,\cdot)-c_0\right )+\textrm{div}_x\beta(t,\cdot)+pc_0\ge -pK_p.
\end{align*}
Hence, from \eqref{int-luglio-0} we get
\begin{align}
&\int_{B(0,n)}(u_n^\varepsilon (t,\cdot))^{p-1}\mathcal{A}(t)u_n^\varepsilon (t,\cdot)\,dx\notag\\  \leq &\varepsilon^{p-1}\int_{\partial B(0,n)}\langle
Q(t,\cdot)\nabla_x u_n(t,\cdot),\nu\rangle\,dx+\frac{\varepsilon^p}{p}\int_{\partial B(0,n)}\langle
\beta(t,\cdot),\nu\rangle\,dx\notag\\
&+K_p\int_{B(0,n)}(u^\varepsilon_n(t,\cdot))^p\,dx
+\varepsilon p\int_{B(0,n)}c(t,\cdot)(u_n^\varepsilon (t,\cdot))^{p-1}\,dx,
\label{int-luglio}
\end{align}
for any $t\in [s,b]$,
where $\nu=\nu(x)$ is the outward unit normal at $x \in \partial B(0,n)$.
If we set
\begin{align*}
g_n^{\varepsilon,p}(t):=&p\,\varepsilon^{p-1}\int_{\partial B(0,n)}\langle Q(t,\cdot)\nabla_xu_n(t,\cdot),\nu\rangle\,dx
 +\varepsilon^p\int_{\partial B(0,n)}\langle \beta(t,\cdot),\nu\rangle\,dx\\
 &+\varepsilon p\int_{B(0,n)}c(t,\cdot)(u_n^\varepsilon (t,\cdot))^{p-1}\,dx,
\end{align*}
from \eqref{int-luglio} we get
\begin{eqnarray*}
\frac{d}{dt}\Vert u_n^\varepsilon(t,\cdot)\Vert_{L^p(B(0,n))}^p \leq g_n^{\varepsilon,p}(t)
+pK_p\Vert u_n^\varepsilon(t,\cdot)\Vert_{L^p(B(0,n))}^p,\quad t\in [s,b].
\end{eqnarray*}
Hence, we easily deduce that
\begin{eqnarray*}
\Vert u_n^\varepsilon(t,\cdot)\Vert_{L^p(B(0,n))}^p\leq e^{pK_p(t-s)}\Vert u_n^\varepsilon(s,\cdot)\Vert_{L^p(B(0,n))}^p+\int_s^t e^{pK_p(t-\tau)}g_n^{\varepsilon,p}(\tau)\,d\tau,
\end{eqnarray*}
and, by dominated convergence, \eqref{crucial} follows at once.

To prove \eqref{crucial} for $p=1$ it suffices to write it for $p>1$ and, then, let $p\to 1^+$ since
\begin{eqnarray*}
\lim_{p\to 1^+}\|\psi\|_{L^p(B(0,n))}=\|\psi\|_{L^1(B(0,n))},
\end{eqnarray*}
for any $\psi\in C(\overline{B(0,n)})$.

Now, let $v_n(t,x)=u_n(t,x)\chi_{B(0,n)}$. Then, $\lim_{n\to+\infty}v_n(t,x)=u(t,x)$ for $(t,x)\in (s,+\infty)\times \R^d$ and
\begin{align*}
\Vert u(t,\cdot)\Vert_{L^p(\R^d)}^p&\leq \liminf_{n \to +\infty}\Vert v_n(t,\cdot)\Vert_{L^p(\R^d)}^p = \liminf_{n \to +\infty}\Vert u_n(t,\cdot)\Vert_{L^p(B(0,n))}^p\\
& \leq \liminf_{n \to +\infty} e^{pK_p(t-s)}\Vert f\Vert_{L^p(B(0,n))}^p = e^{pK_p(t-s)}\Vert f\Vert_{L^p(\R^d)}^p,
\end{align*}
for any $t\in [s,b]$. Therefore,
$G(t,s)\in \mathcal{L}(L^p(\R^d))$
for $1\le p<+\infty$ and $t\in [s,T]$, and it satisfies \eqref{stima-norma-op-Lp}.
This completes the proof.
\end{proof}

The condition assumed in Theorem \ref{thm:2} is a sort of compensation between the diffusion coefficients, the drift, the potential of the operator $\mathcal{A}$. Note that in the case when $c\equiv 0$ and $q_{ij}$ ($i,j=1,\ldots,d$) are constant with respect to the spatial variables, such a condition reduces to the request that
the spatial divergence of the drift $b$ is bounded from below.
Slightly modifying the proof of the previous theorem, we can give another sufficient condition for $L^p(\R^d)$ be preserved
by the action of the evolution operator $G(t,s)$, which applies to some situation where condition
\eqref{cond-diverg} is not satisfied (see Remark \ref{rem:6.7}).

\begin{thm}
\label{thm-variante}
Fix $p>1$, $a,b\in I$ with $a<b$. Assume that
\begin{equation}
\frac{|\beta(t,x)|^2}{4(p-1)\eta(t,x)}-c(t,x)\le K_p',\quad\;\,(t,x)\in [a,b]\times\R^d,
\label{assumption}
\end{equation}
$($see \eqref{coeff-beta}$)$ for some positive constant $K_p'$. Then, $L^p(\R^d)$ is invariant under $G(t,s)$ for any $(t,s)\in\Lambda_{[a,b]}$.
Moreover,
\begin{eqnarray*}
\|G(t,s)f\|_{L^p(\Rd)}\le e^{K_p'(t-s)}\|f\|_{L^p(\Rd)},\quad\;\,a\le s\le t\le b.
\end{eqnarray*}
\end{thm}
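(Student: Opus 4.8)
The plan is to follow the scheme of the proof of Theorem \ref{thm:2}, the crucial change being that the drift term is no longer integrated by parts a second time (which is what previously produced $\mathrm{div}_x\beta$ and forced the differentiability assumptions there), but is instead absorbed pointwise into the second-order term. By density of $C_c^\infty(\R^d)$ in $L^p(\R^d)$ and the bound $|G(t,s)f|\le G(t,s)|f|$ (see \eqref{repres-formula}), it suffices to prove the estimate for nonnegative $f\in C_c^\infty(\R^d)$. Fix such an $f$, fix $s\in[a,b]$, set $u=G(\cdot,s)f$ and, for $n$ large enough that $\supp f\subset B(0,n)$, let $u_n=G_n^D(\cdot,s)f$ be the classical solution of \eqref{pb-approx-Dirichlet}; by Theorem \ref{thm-1.3}(ii), $u_n\ge 0$ in $[s,+\infty)\times B(0,n)$ and $u_n\to u$ pointwise as $n\to+\infty$. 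With $u_n^\varepsilon:=u_n+\varepsilon$, differentiating under the integral over the bounded set $B(0,n)$ gives
\[
\frac{d}{dt}\|u_n^\varepsilon(t,\cdot)\|_{L^p(B(0,n))}^p=p\int_{B(0,n)}(u_n^\varepsilon(t,\cdot))^{p-1}\mathcal{A}(t)u_n(t,\cdot)\,dx,\qquad t\in[s,b].
\]

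The heart of the argument is to bound the right-hand side. Integrating by parts once the second-order part of $\mathcal{A}(t)u_n$ (which only uses the spatial $C^1$-regularity of the $q_{ij}$ from Hypothesis \ref{hyp-5.1}) produces a boundary term of order $\varepsilon^{p-1}$, the negative term $-(p-1)\int_{B(0,n)}(u_n^\varepsilon)^{p-2}\langle Q(t,\cdot)\nabla_x u_n,\nabla_x u_n\rangle\,dx$, and a term $-\int_{B(0,n)}(u_n^\varepsilon)^{p-1}\sum_{i,j}(D_jq_{ij})(t,\cdot)D_i u_n\,dx$ which, added to the first-order part $\int_{B(0,n)}(u_n^\varepsilon)^{p-1}\langle b(t,\cdot),\nabla_x u_n\rangle\,dx$, becomes $\int_{B(0,n)}(u_n^\varepsilon)^{p-1}\langle \beta(t,\cdot),\nabla_x u_n\rangle\,dx$ by \eqref{coeff-beta}. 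I would then estimate the last integrand by Young's inequality with the genuinely $(t,x)$-dependent weight $(p-1)\eta(t,x)$:
\[
(u_n^\varepsilon)^{p-1}|\beta||\nabla_x u_n|\le (p-1)\eta\,(u_n^\varepsilon)^{p-2}|\nabla_x u_n|^2+\frac{|\beta|^2}{4(p-1)\eta}(u_n^\varepsilon)^p,
\]
and invoke the ellipticity inequality $(p-1)\eta|\nabla_x u_n|^2\le (p-1)\langle Q\nabla_x u_n,\nabla_x u_n\rangle$ to cancel the gradient term against the negative quadratic term above. Combining this with the identity $-c\,u_n(u_n^\varepsilon)^{p-1}=-c(u_n^\varepsilon)^p+\varepsilon c(u_n^\varepsilon)^{p-1}$ and with assumption \eqref{assumption}, which yields $\big(|\beta|^2/(4(p-1)\eta)-c\big)(u_n^\varepsilon)^p\le K_p'(u_n^\varepsilon)^p$, one is led to
\[
\frac{d}{dt}\|u_n^\varepsilon(t,\cdot)\|_{L^p(B(0,n))}^p\le g_n^{\varepsilon,p}(t)+pK_p'\|u_n^\varepsilon(t,\cdot)\|_{L^p(B(0,n))}^p,\qquad t\in[s,b],
\]
where $g_n^{\varepsilon,p}$ collects the boundary term of order $\varepsilon^{p-1}$ and the term $p\varepsilon\int_{B(0,n)}c(t,\cdot)(u_n^\varepsilon)^{p-1}\,dx$ of order $\varepsilon$.

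To conclude, Gronwall's lemma gives $\|u_n^\varepsilon(t,\cdot)\|_{L^p(B(0,n))}^p\le e^{pK_p'(t-s)}\|u_n^\varepsilon(s,\cdot)\|_{L^p(B(0,n))}^p+\int_s^t e^{pK_p'(t-\tau)}g_n^{\varepsilon,p}(\tau)\,d\tau$. Letting $\varepsilon\to 0^+$ — here $p>1$ enters, so that $\varepsilon^{p-1}\to 0$ — and using that $\nabla_x u_n$ and $c$ are bounded on $[s,b]\times\overline{B(0,n)}$, whence $g_n^{\varepsilon,p}\to 0$ uniformly on $[s,b]$, dominated convergence yields $\|u_n(t,\cdot)\|_{L^p(B(0,n))}\le e^{K_p'(t-s)}\|f\|_{L^p(\R^d)}$ for every $t\in[s,b]$. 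Applying Fatou's lemma to $u_n\chi_{B(0,n)}\to u$ then gives $\|u(t,\cdot)\|_{L^p(\R^d)}\le e^{K_p'(t-s)}\|f\|_{L^p(\R^d)}$, i.e.\ the invariance of $L^p(\R^d)$ and the stated estimate for such $f$, and the general case $f\in L^p(\R^d)$ follows by density. I expect the only real difficulty to be the absorption step: the weight in Young's inequality must be the pointwise ellipticity function $\eta(t,x)$ rather than a constant, and it is exactly the residual $|\beta|^2/(4(p-1)\eta)$ that assumption \eqref{assumption} is designed to control; the remaining steps are a routine adaptation of the proof of Theorem \ref{thm:2}. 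The advantage of this route is that the drift is never integrated by parts, so no bound on $\mathrm{div}_x\beta$ (in particular no second spatial derivatives of the $q_{ij}$, nor differentiability of the $b_j$) is required, which is precisely why this criterion can apply when \eqref{cond-diverg} fails.
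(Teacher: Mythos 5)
Your proof is correct and follows essentially the same route as the paper: both integrate by parts only the second-order term, combine the residual with the drift to form the $\beta$-term, and absorb it into the negative gradient quadratic via Young's inequality with the pointwise weight involving $\eta(t,x)$, with the assumption \eqref{assumption} then delivering the constant $K_p'$. The only cosmetic difference is that you apply the weighted Young inequality directly pointwise, whereas the paper first applies the Cauchy--Schwarz inequality in $L^2$ and then a scalar Young inequality with a parameter $\delta$ that is subsequently optimized to $\delta=p-1$; these two computations are identical.
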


\begin{proof}
The main difference with respect to the proof of Theorem \ref{thm:2} is in the estimate of
the term
\begin{eqnarray*}
I:=\int_{B(0,n)}(u_n^{\varepsilon}(t,x))^{p-1}\langle \beta(t,x),\nabla_xu_n(t,x)\rangle\,dx.
\end{eqnarray*}
Using H\"older and Young's inequality we can estimate
\begin{align*}
I\leq & \int_{B(0,n)}\sqrt{\eta(t,\cdot)}(u_n^{\varepsilon}(t,\cdot))^{\frac{p}{2}-1}|\nabla_xu_n(t,\cdot)|\frac{1}{\sqrt{\eta(t,\cdot)}}|\beta(t,\cdot)|
(u_n^{\varepsilon}(t,\cdot))^{\frac{p}{2}}\,dx\\
\le & \left (\int_{B(0,n)}\eta(t,\cdot)(u_n^{\varepsilon}(t,\cdot))^{p-2}|\nabla_xu_n(t,\cdot)|^2\,dx\right )^{\frac{1}{2}}\hskip -2pt
\left (\int_{B(0,n)}\frac{|\beta(t,\cdot)|^2}{\eta(t,\cdot)}(u_n^{\varepsilon}(t,\cdot))^pdx\right )^{\frac{1}{2}}\\
\le &\delta\int_{B(0,n)}\eta(t,\cdot)(u_n^{\varepsilon}(t,\cdot))^{p-2}|\nabla_xu_n(t,\cdot)|^2\,dx
+\frac{1}{4\delta}\int_{B(0,n)}\frac{|\beta(t,\cdot)|^2}{\eta(t,\cdot)}(u_n^{\varepsilon}(t,\cdot))^pdx,
\end{align*}
for any $\delta>0$. Hence,
\begin{align*}
&\int_{B(0,n)}(u_n^\varepsilon (t,\cdot))^{p-1}\mathcal{A}(t)u_n(t,\cdot)\,dx\\
\le  & \varepsilon^{p-1}\int_{\partial B(0,n)}\langle Q(t,\cdot)\nabla_xu_n(t,\cdot),\nu\rangle\,dx
 +\int_{B(0,n)}\left (\frac{|\beta(t,\cdot)|^2}{4\delta\eta(t,\cdot)}-c(t,\cdot)\right )(u^\varepsilon_n(t,\cdot))^p\,dx\\
&-\left (p-1-\delta\right )\int_{B(0,n)}\eta(t,\cdot)(u_n^\varepsilon(t,\cdot))^{p-2}|\nabla_xu_n(t,\cdot)|^2 dx\\
&+\varepsilon\int_{B(0,n)}c(t,\cdot)(u_n^\varepsilon (t,\cdot))^{p-1}\,dx.
\end{align*}
The optimal choice $\delta=p-1$ gives
\begin{align*}
&\int_{B(0,n)}(u_n^\varepsilon (t,\cdot))^{p-1}\mathcal{A}(t)u_n(t,\cdot)\,dx\\
\le  & \varepsilon^{p-1}\int_{\partial B(0,n)}\langle Q(t,\cdot)\nabla_xu_n(t,\cdot),\nu\rangle\,dx
+K_p'\int_{B(0,n)}(u^\varepsilon_n(t,\cdot))^p\,dx\\
&+\varepsilon\int_{B(0,n)}c(t,\cdot)(u_n^\varepsilon (t,\cdot))^{p-1}\,dx.
\end{align*}

Now, we can conclude arguing as in the proof of Theorem \ref{thm:2}.
\end{proof}

\section{Examples}
\label{sect-6}
In this section we exhibit some classes of operators to which the main results of this paper apply.

\subsection{A class of operators to which Theorem \ref{thm-comp} applies}

Let $\mathcal{A}$ be the differential operator defined by
\begin{equation}
(\mathcal{A}(t)\psi)(x)=\omega(t)(1+|x|^2)^k\Delta\psi(x)+\langle b(t,x),\nabla
\psi(x)\rangle-c(t,x)(1+|x|^2)^m\psi(x),
\label{op-ex-1}
\end{equation}
for any $(t,x)\in I\times\R^{d}$,
on smooth functions $\psi:\R^d\to\R$.
\begin{hyp}
\begin{enumerate}[\rm (i)]
\item
$k,m\in\N$;
\item
$\omega \in C^{\alpha/2}_{\rm loc}(I)$ satisfies $\inf_{t\in I}\omega(t)>0$, $b\in C^{\alpha/2,\alpha}_{\rm loc}(I\times\R^{d},\R^d)$ and
$c\in C^{\alpha/2,\alpha}_{\rm loc}(I\times\R^{d})$ is positive and bounded;
\item
there exist $l\in\N$ such that $l>(m+2)\vee k$, $R>0$ and a continuous function $C_1: I\to
(0,+\infty)$ such that
\begin{eqnarray*}
\langle b(t,x),x\rangle \le
-C_1(t)(1+|x|^2)^l,\qquad\;\,(t,x)\in I\times\R^{d}\setminus B(0,R).
\end{eqnarray*}
\end{enumerate}
\end{hyp}
Under such assumptions the operator $G(t,s)$ associated with the operator ${\mathcal A}$ in \eqref{op-ex-1} is compact in
$C_b(\R^d)$ for any $t,s\in I$ with $s<t$. To check the claim it suffices to show
that, for any bounded interval $J\subset I$, there exist a positive and bounded smooth function $W:\R^d\to\R$, with positive infimum, a
positive smooth function $\varphi:\R^d\to\R$, blowing up as $|x|\to
+\infty$, an increasing strictly convex function $h:[0,+\infty)\to
\R$, with $1/h$ being integrable in a neighborhood of $+\infty$, and
$\mu\in\R$ such that
\begin{align}
&(i)~(\mathcal{A}(t)W)(x)-\mu W(x)\ge
0,\quad\;\,(t,x)\in J\times\R^d\setminus B(0,R),\notag\\
&(ii)~(\mathcal{A}(t)\varphi)(x)\le
-h(\varphi(x)),\quad\;\,(t,x)\in J\times\Rd. \label{2cond}
\end{align}

We have also to show that Hypothesis \ref{hyp1-bis} is fulfilled. For notational convenience
we set $\omega_0=\sup_{t\in J}\omega(t)$.

To prove the first condition in \eqref{2cond}, we set
$W(x)=1+\frac{1}{1+|x|^2}$ for any $x\in\R^d$. Then,
\begin{align*}
(\mathcal{A}(t)W)(x)-\mu W(x) =&-2d\omega(t)(1+|x|^2)^{k-2}
+8 \omega(t)|x|^2(1+|x|^2)^{k-3}\\
& -2\frac{\langle
b(t,x),x\rangle}{(1+|x|^2)^2}
-\left (c(t,x)(1+|x|^2)^m+\mu\right )\left
(1+\frac{1}{1+|x|^2}\right )\\
\ge &2(1+|x|^2)^{l-2}\bigg\{C_1(t) -d\omega(t)(1+|x|^2)^{k-l}\\
&\qquad\qquad\qquad-c(t,x)(1+|x|^2)^{m-l+2}
-|\mu| (1+|x|^2)^{2-l}\bigg\}\\
\ge &2(1+|x|^2)^{l-2}\bigg\{C_1(t) -d\omega_0R^{2k-2l}-|\mu| R^{4-2l}\\
&\qquad\qquad\qquad\;\;\;\;\;
-R^{2m-2l+4}\sup_{(t,x)\in J\times\Rd}c(t,x)\bigg\},
\end{align*}
for any $(t,x)\in J\times\Rd$.
Hence, condition \eqref{2cond}(i) follows for any $\mu\in\R$, provided we take $R$ sufficiently
large.

Let us now check condition \eqref{2cond}(ii). For this purpose, we
set $\varphi(x)=1+|x|^2$ for any $x\in\R^d$. Then,
\begin{align*}
(\mathcal{A}(t)\varphi)(x)=&2d\,\omega(t)(1+|x|^2)^k+2\langle b(t,x),x\rangle -c(t,x)(1+|x|^2)^{m+1}\\
\le & 2d\,\omega(t)(1+|x|^2)^k-2C_1(t)(1+|x|^2)^l\\
=& (1+|x|^2)^l\left
\{-2C_1(t)+2d\,\omega(t)(1+|x|^2)^{k-l}\right\}\\
\le &2(1+|x|^2)^l\left \{-C_1(t)+\omega_0d(1+|x|^2)^{-1}\right\},
\end{align*}
where in the last inequality we have used the fact that $l\ge k+1$.
We now observe that, for any $\varepsilon>0$, any $a>0$ and any
$p\in\N$, $p\ge 2$, it holds that
\begin{eqnarray*}
ar\le \varepsilon+\varepsilon^{1-p}p^{-p}(p-1)^{p-1}a^p r^p:=
\varepsilon +C_{\varepsilon}a^pr^p,\qquad\;\,r>0.
\end{eqnarray*}
Applying this inequality with
\begin{align*}
r=\frac{1}{1+|x|^2},\qquad\;\,a=\omega_0d,\qquad p=l> 2,
\end{align*}
we can estimate
\begin{align*}
(\mathcal{A}(t)\varphi)(x)\le
&-2(C_1(t)-\varepsilon)(1+|x|^2)^l+
2C_{\varepsilon}(\omega_0d)^l,\qquad (t,x)\in J\times\Rd.
\end{align*}
Fix $2\varepsilon<\inf_{t\in J}C_1(t):=\gamma$. With this
choice of $\varepsilon$ we get
\begin{align*}
(\mathcal{A}(t)\varphi)(x)\le -\gamma(1+|x|^2)^l
+2C_{\varepsilon}(\omega_0d)^l:=-\gamma(1+|x|^2)^l+C_{\varepsilon}'.
\end{align*}
Now, we introduce the function $h:[0,+\infty)\to\R$ defined by
$h(t)=\gamma t^l-C_{\varepsilon}'$ for any $t\ge 0$. Clearly, $h$ is strictly increasing,
convex, $1/h$ is integrable in a neighborhood of $+\infty$.
Moreover, $\mathcal{A}(t)\varphi(x)\le
-h(\varphi(x))$ for any $t\in J$ and any $x\in\R^d$,
i.e., condition \eqref{2cond}(ii) holds true.

Note that, in fact, we have shown that
\begin{eqnarray*}
(\mathcal{A}_{-c}(t)\varphi)(x) \le  -\gamma(1+|x|^2)^l
+C_{\varepsilon}',\qquad\;\,t\in J,\;\,x\in\R^d.
\end{eqnarray*}
In particular, this implies that $\mathcal{A}_{-c}(t)\varphi(x) \le
C_{\varepsilon}'\varphi(x)$ for any $(t,x)\in J\times\Rd$,
which clearly implies Hypothesis \ref{hyp1-bis}.

\subsection{A class of operators to which the results of Section \ref{sect-5} apply}

Let ${\mathcal A}$ be defined by
\begin{equation}
({\mathcal A}(t)\varphi)(x)=(1+|x|^2)^m{\rm Tr}(Q(t,x)D^2\varphi(x))+(1+|x|^2)^rb(t)\langle x,\nabla\varphi(x)\rangle-c(t,x)\varphi(x),
\label{oper-A}
\end{equation}
where $m,r$ are nonnegative constants.
We assume the following set of assumptions on the coefficients of the operator ${\mathcal A}$, on $m$ and $r$.
\begin{hyp}\label{hyp2}
\begin{enumerate}[\rm (i)]
\item
$b\in C^{\alpha/2}_{\rm loc}(I)$, $b(t)\le 0$ for any $t\in I$;
\item
$c\in C^{\alpha/2,\alpha}_{\rm loc}(I\times\Rd)$ and, for any bounded interval $J\subset I$, there exist $C_J\ge 0$ and $q=q_J\ge 0$ such that $c(t,x)\ge
C_J(1+|x|^2)^q$ for any $(t,x)\in J\times\Rd$;
\item
there exists a positive constant $\eta_0$ such that
\begin{eqnarray*}
\langle Q(t,x)\xi,\xi\rangle\geq \eta_0|\xi|^2,\quad \xi\in \Rd,\quad (t,x)\in I\times \Rd.
\end{eqnarray*}
Moreover,
\begin{eqnarray*}
M^{(1)}_J:=\sup_{(t,x)\in J\times\Rd}|Q(t,x)|_{\R^{d^2}}<+\infty,
\end{eqnarray*}
for any bounded interval $J\subset I$;
\item
for any bounded interval $J\subset I$ one of the following conditions is satisfied:
\begin{enumerate}[\rm (a)]
\item
$r> m-1$ and $b(t)<0$ in $J$;
\item
$q_J> m-1$ and $C_J>0$;
\end{enumerate}
\item
there exists a compact set $[a,b]\subset I$  such that $C_{[a,b]}>0$ and $q_{[a,b]}>\max\{r,m-1,1\}$.
\end{enumerate}
\end{hyp}

Under the previous conditions, Hypothesis \ref{hyp1} is satisfied. Of course, we have to check only Hypothesis \ref{hyp1}(iv).
For this purpose we take $\varphi(x)=1+|x|^2$ for any $x\in\Rd$.
As it is easily seen
\begin{align*}
({\mathcal A}(t)\varphi)(x)=&2{\rm Tr}(Q(t,x))(1+|x|^2)^m+2b(t)|x|^2(1+|x|^2)^r-c(t,x)(1+|x|^2),
\end{align*}
for any $(t,x)\in I\times\Rd$.
Hence,
\begin{align*}
({\mathcal A}(t)\varphi)(x)\le
2\sqrt{d}M_K^{(1)}(1+|x|^2)^m+2b(t)|x|^2(1+|x|^2)^r-C_K(1+|x|^2)^{q+1},
\end{align*}
for any $(t,x)\in J\times\Rd$ and any bounded interval $J\subset I$.
Is is now easy to show that, under Hypothesis \ref{hyp2}(iv-a) or \ref{hyp2}(iv-b)
\begin{eqnarray*}
R_J:=\sup_{(t,x)\in J\times\R^d}\mathcal A(t)\varphi(x)< +\infty.
\end{eqnarray*}
Hence, Hypothesis \ref{hyp1}(iv) is satisfied with $\lambda=R_J \vee 0$.

We now consider the function $V:\Rd\to\R$ defined by $V(x)=(1+|x|^2)^{-1}$ for any $x\in\Rd$. A straightforward computation shows that
\begin{align*}
({\mathcal A}(t)V)(x) =& 8\langle Q(t,x)x,x\rangle (1+|x|^2)^{m-3}
-2{\rm Tr}(Q(t,x))(1+|x|^2)^{m-2}\\
&-2b(t)|x|^2(1+|x|^2)^{r-2}-c(t,x)(1+|x|^2)^{-1},
\end{align*}
for any $(t,x)\in I\times\Rd$.
Hence,
\begin{align*}
({\mathcal A}(t)V)(x)\le &8M_{[a,b]}^{(1)}|x|^2(1+|x|^2)^{m-3}
+2\|b\|_{L^{\infty}((a,b))}|x|^2(1+|x|^2)^{r-2}\\
&-C_{[a,b]}(1+|x|^2)^{q-1},
\end{align*}
for any $(t,x)\in [a,b]\times\Rd$. Therefore, taking Hypothesis \ref{hyp2}(v) into account, we can conclude that
\begin{eqnarray*}
\lim_{|x|\to +\infty}\sup_{t\in [a,b]}({\mathcal A}(t)V)(x)=-\infty.
\end{eqnarray*}
In particular, there exists $R>0$ such that $({\mathcal A}(t)V)(x)\le 0$ for any $(t,x)\in [a,b]\times \R^d\setminus\overline{B(0,R)}$.
Therefore, the condition
\begin{eqnarray*}
{\mathcal A}(t)V(x)\le \lambda_0V(x),\qquad\;\,(t,x)\in [a,b]\times\Rd,
\end{eqnarray*}
is satisfied with $\lambda_0=(1+R^2)\left (\sup_{(t,x)\in [a,b]\times B(0,R)}{\mathcal A}(t)V(x)\right )^+$. Here,
$(\,\cdot\,)^+$ denotes the positive part of the quantity in brackets.
As a byproduct, we get the following:
\begin{prop}
Under Hypothesis $\ref{hyp2}$ the evolution
operator $(G(t,s))$ associated with the operator ${\mathcal A}$ in
\eqref{oper-A} preserves $C_0(\R^d)$ for any $s,t\in\Lambda_{[a,b]}$.
\end{prop}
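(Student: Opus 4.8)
The plan is to apply directly the abstract invariance criterion for $C_0(\R^d)$ proved at the beginning of Section~\ref{sect-5}, whose hypotheses have essentially been checked in the computations preceding this statement. First I would record that those computations show that $\mathcal{A}$ in \eqref{oper-A} satisfies Hypothesis~\ref{hyp1} whenever Hypothesis~\ref{hyp2} holds: only Hypothesis~\ref{hyp1}(iv) requires a verification, and it is supplied by the bound on $\mathcal{A}(t)\varphi$ with $\varphi(x)=1+|x|^2$. Hence the evolution operator $(G(t,s))$ is well defined and all the results of the previous sections apply to it.

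Next I would use the test function $V(x)=(1+|x|^2)^{-1}$, which is strictly positive, of class $C^2(\R^d)$, and tends to $0$ as $|x|\to+\infty$. From the explicit form of $(\mathcal{A}(t)V)(x)$ computed above, together with Hypothesis~\ref{hyp2}(v), one has $\sup_{t\in[a,b]}(\mathcal{A}(t)V)(x)\to-\infty$ as $|x|\to+\infty$; hence there is $R>0$ with $(\mathcal{A}(t)V)(x)\le 0$ for $(t,x)\in[a,b]\times\R^d\setminus\overline{B(0,R)}$, while on the compact set $[a,b]\times\overline{B(0,R)}$ the continuous function $(t,x)\mapsto(\mathcal{A}(t)V)(x)$ is bounded above by some $\Lambda_0\in\R$. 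Since $V(x)\ge(1+R^2)^{-1}$ for $|x|\le R$, the choice $\lambda_0:=\big((1+R^2)\Lambda_0\big)\vee 1>0$ gives $(\mathcal{A}(t)V)(x)\le\lambda_0 V(x)$ both inside and outside $B(0,R)$, that is, $\lambda_0 V(x)-(\mathcal{A}(t)V)(x)\ge 0$ on all of $[a,b]\times\R^d$.

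With $V$ and $\lambda_0$ thus fixed, the abstract invariance proposition applies on the interval $[a,b]$ and yields that $G(t,s)$ preserves $C_0(\R^d)$ for every $(t,s)\in\Lambda_{[a,b]}$, which is the assertion. I do not expect any genuine obstacle here: the result is a direct specialization of an already established one, the only point deserving a word of care being that $\lambda_0$ must be taken strictly positive (whence the maximum with $1$, needed only in the degenerate case where $\mathcal{A}(t)V$ happens to be nonpositive on the whole of $[a,b]\times\R^d$).
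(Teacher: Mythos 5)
Your proof is correct and follows essentially the same route as the paper: take $V(x)=(1+|x|^2)^{-1}$, use the explicit computation of $\mathcal{A}(t)V$ and Hypothesis~\ref{hyp2}(v) to get $\mathcal{A}(t)V\le 0$ outside a ball, choose $\lambda_0$ large enough to absorb the supremum of $\mathcal{A}(t)V$ on the remaining compact set, and apply the abstract $C_0$-invariance proposition of Section~\ref{sect-5}. The only variation is your $\vee\,1$ to guarantee $\lambda_0>0$ in the degenerate case, a small technical refinement of the paper's choice $\lambda_0=(1+R^2)\bigl(\sup_{[a,b]\times B(0,R)}\mathcal{A}(t)V\bigr)^+$.
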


Let us now compute the divergence of the vector field $\beta$ defined in \eqref{coeff-beta} for the operator $\mathcal{A}$ in \eqref{oper-A}.
For this purpose, we assume the following additional assumptions on the coefficients of the operator
$\mathcal{A}$, on $m$, $q$ and $r$.
\begin{hyp}
\label{hyp5}
\begin{enumerate}[\rm (i)]
\item
The diffusion coefficients $q_{ij}$ $(i,j=1,\ldots,d)$ are continuously differentiable in $I\times\R^d$ with respect to
the spatial variables and $\nabla_xq_{ij}$ is bounded in $[a,b]\times\Rd$ $($where $[a,b]$ is as in Hypothesis $\ref{hyp2}(v))$ and any $i,j=1,\ldots,d$. Moreover,
the second-order weak spatial derivatives
$D_{ij}q_{ij}$ $(i,j=1,\ldots,d)$ exist and are bounded functions in $[a,b]\times\Rd$;
\item
$q_{[a,b]}>\max\{r,m,1\}$.
\end{enumerate}
\end{hyp}
Under such additional assumptions we get
\begin{align*}
&{\rm div}_x\beta(t,x)+c(t,x)\\
=&-4m(1+|x|^2)^{m-1}\sum_{i,j=1}^dD_iq_{ij}(t,x)x_j
-(1+|x|^2)^m\sum_{i,j=1}^dD_{ij}q_{ij}(t,x)\\
&-4m(m-1)(1+|x|^2)^{m-2}\langle Q(t,x)x,x\rangle
-2m(1+|x|^2)^{m-1}{\rm Tr}(Q(t,x))\\
&+b(t)(1+|x|^2)^{r-1}\left (d+(2r+d)|x|^2\right )+c(t,x),
\end{align*}
for any $(t,x)\in I\times\Rd$.
Hence, we can estimate
\begin{align*}
&{\rm div}_x\beta(t,x)+c(t,x)\\
\ge &-4mM^{(2)}_{[a,b]}|x|(1+|x|^2)^{m-1}
-(1+|x|^2)^{m}M^{(3)}_{[a,b]}\\
&-4m(m-1)M_{[a,b]}^{(1)}(1+|x|^2)^{m-1}
-2m\sqrt{d}M_{[a,b]}^{(1)}(1+|x|^2)^{m-1}\\
&-\|b\|_{L^{\infty}((a,b))}(1+|x|^2)^{r-1}\left (d+(2r+d)|x|^2\right )
+C_{[a,b]}(1+|x|^2)^q,
\end{align*}
for any $(t,x)\in [a,b]\times\Rd$, where
\begin{align*}
&M_{[a,b]}^{(2)}=\sup_{(t,x)\in [a,b]\times\Rd}\left (\sum_{j=1}^d\left (\sum_{i=1}^d|D_iq_{ij}(t,x)|\right )^2\right )^{\frac{1}{2}},\\
&M_{[a,b]}^{(3)}=\sum_{i,j=1}^d\sup_{(t,x)\in [a,b]\times\R^d}|D_{ij}q_{ij}(t,x)|.
\end{align*}

Due to the conditions imposed on $m,q,r$,
${\rm div}_x\beta(t,x)+c(t,x)$ tends to $+\infty$ as $|x|\to +\infty$ for any $t\in [a,b]$. We have so proved the following.

\begin{prop}
\label{prop-1}
Under Hypotheses $\ref{hyp2}$ and $\ref{hyp5}$ the
evolution operator $G(t,s)$ associated with the operator ${\mathcal A}$ in \eqref{oper-A} preserves $L^p(\Rd)$ for any $p\in [1,+\infty)$ and any $s,t\in \Lambda_{[a,b]}$.
\end{prop}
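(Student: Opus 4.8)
The plan is to deduce the statement from Theorem \ref{thm:2}, so that the whole proof reduces to checking that the operator $\mathcal{A}$ in \eqref{oper-A} satisfies the assumptions of that theorem on the interval $[a,b]$. Recall that Hypothesis \ref{hyp2} has already been shown, in the discussion preceding this subsection, to imply Hypothesis \ref{hyp1}; hence it remains only to verify the additional regularity required by Theorem \ref{thm:2} and, above all, the sign condition \eqref{cond-diverg}.

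For the regularity part: the diffusion coefficients of $\mathcal{A}$ are $(1+|x|^2)^m q_{ij}(t,\cdot)$, which by Hypothesis \ref{hyp5}(i) are continuously differentiable with respect to $x$ on $[a,b]\times\R^d$ and possess (bounded) second-order weak spatial derivatives; the drift components $(1+|x|^2)^r b(t)x_i$ are polynomials in $x$ with coefficients depending only on $t$, hence are $C^1$ in $x$ and all the spatial derivatives appearing in the statement of Theorem \ref{thm:2} exist. Thus Hypothesis \ref{hyp-5.1} and the differentiability requirements of Theorem \ref{thm:2} hold on $[a,b]$.

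It then remains to produce the constant $K$ in \eqref{cond-diverg}. Here I would invoke the explicit lower bound for ${\rm div}_x\beta(t,x)+c(t,x)$ computed immediately before the statement: every negative contribution appearing there is $O\big((1+|x|^2)^{\sigma}\big)$ with $\sigma\le\max\{m,r\}$, whereas the positive term equals $C_{[a,b]}(1+|x|^2)^{q}$ with $C_{[a,b]}>0$ by Hypothesis \ref{hyp2}(v). Since Hypothesis \ref{hyp5}(ii) guarantees $q_{[a,b]}>\max\{r,m,1\}$, the positive term dominates, so that ${\rm div}_x\beta(t,x)+c(t,x)\to+\infty$ as $|x|\to+\infty$, uniformly in $t\in[a,b]$. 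In particular the continuous function $(t,x)\mapsto{\rm div}_x\beta(t,x)+c(t,x)$ is bounded from below on $[a,b]\times\R^d$, which is exactly condition \eqref{cond-diverg}.

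With \eqref{cond-diverg} available, Theorem \ref{thm:2} applies verbatim and yields at once that $L^p(\R^d)$ is invariant under $G(t,s)$ for every $p\in[1,+\infty)$ and every $(t,s)\in\Lambda_{[a,b]}$, together with the norm estimate \eqref{stima-norma-op-Lp}. There is no genuine obstacle in this argument; the only point requiring a little care is the exponent count ensuring that the positive potential/divergence term genuinely dominates all the negative ones, and this is precisely the role of the assumption $q_{[a,b]}>\max\{r,m,1\}$ in Hypothesis \ref{hyp5}(ii).
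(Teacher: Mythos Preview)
Your proposal is correct and follows essentially the same approach as the paper: the paper's proof consists precisely of the explicit computation and lower bound for ${\rm div}_x\beta+c$ given immediately before the proposition, the observation that under Hypothesis \ref{hyp5}(ii) this quantity diverges to $+\infty$ as $|x|\to+\infty$, and an implicit appeal to Theorem \ref{thm:2}. You are in fact slightly more explicit than the paper in checking the regularity hypotheses of Theorem \ref{thm:2}, but the argument is the same.
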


Finally, observe that, arguing as above, one can easily verify that, if
\begin{equation}
q_{[a,b]}>\max\{m,2r+1-m\},
\label{cond}
\end{equation}
then the condition \eqref{assumption} is fulfilled. Hence,
\begin{prop}
\label{prop-2}
Let Hypotheses $\ref{hyp2}(i)$-$(iv)$ and condition \eqref{cond} be fulfilled.
Further, assume that the diffusion coefficients $q_{ij}$ $(i,j=1,\ldots,d)$ are
continuously differentiable with respect to the spatial variables and
assume that $C_{[a,b]}>0$.
Then, the evolution operator $G(t,s)$ associated with the operator ${\mathcal A}$ in
\eqref{oper-A} preserves $L^p(\Rd)$ for any $s,t\in \Lambda_{[a,b]}$.
\end{prop}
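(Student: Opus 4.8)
The plan is to deduce the statement from Theorem~\ref{thm-variante}, applied with the interval $[a,b]$ appearing in condition \eqref{cond} and, for each fixed $p\in(1,+\infty)$. Since Hypotheses \ref{hyp2}(i)--(iv) guarantee, as already verified above, that Hypothesis \ref{hyp1} holds for the operator \eqref{oper-A}, and since the diffusion coefficients $q_{ij}$ are continuously differentiable in the spatial variables on $[a,b]\times\R^d$, Hypothesis \ref{hyp-5.1} is fulfilled as well; hence the only thing to be checked is condition \eqref{assumption} with $J=[a,b]$, namely the existence of a constant $K_p'>0$ such that $|\beta(t,x)|^2\,(4(p-1)\eta(t,x))^{-1}-c(t,x)\le K_p'$ on $[a,b]\times\R^d$, where $\eta$ is an ellipticity function for \eqref{oper-A} and $\beta$ is the vector field in \eqref{coeff-beta}.

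First I would make the relevant quantities explicit. The diffusion coefficients of \eqref{oper-A} are $(1+|x|^2)^mq_{ij}(t,x)$ (with $q_{ij}$ the entries of $Q$), so by Hypothesis \ref{hyp2}(iii) we may take $\eta(t,x)=\eta_0(1+|x|^2)^m$, while the drift coefficients are $(1+|x|^2)^rb(t)x_i$. Differentiating, the components of $\beta$ are
\begin{align*}
\beta_i(t,x)=(1+|x|^2)^rb(t)x_i-2m(1+|x|^2)^{m-1}\sum_{j=1}^dq_{ij}(t,x)x_j-(1+|x|^2)^m\sum_{j=1}^dD_jq_{ij}(t,x).
\end{align*}
Using that $b$ is bounded on $[a,b]$ (Hypothesis \ref{hyp2}(i)), that $|Q|$ is bounded on the strip (Hypothesis \ref{hyp2}(iii)) and that the spatial gradients $\nabla_xq_{ij}$ are bounded on $[a,b]\times\R^d$ (cf.\ Hypothesis \ref{hyp5}(i)), the three summands above grow at most like $(1+|x|^2)^{r+1/2}$, $(1+|x|^2)^{m-1/2}$ and $(1+|x|^2)^{m}$, so that $|\beta(t,x)|^2\le C(1+|x|^2)^{\max\{2r+1,\,2m\}}$ on $[a,b]\times\R^d$ for a suitable $C>0$.

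Dividing by $4(p-1)\eta(t,x)=4(p-1)\eta_0(1+|x|^2)^m$ then gives $|\beta(t,x)|^2\,(4(p-1)\eta(t,x))^{-1}\le C'(1+|x|^2)^{\max\{2r+1-m,\,m\}}$, whereas Hypothesis \ref{hyp2}(ii) together with $C_{[a,b]}>0$ yields $c(t,x)\ge C_{[a,b]}(1+|x|^2)^{q_{[a,b]}}$ on $[a,b]\times\R^d$, and condition \eqref{cond} says precisely that $q_{[a,b]}>\max\{m,2r+1-m\}$. Consequently the difference $|\beta(t,x)|^2\,(4(p-1)\eta(t,x))^{-1}-c(t,x)$ tends to $-\infty$ as $|x|\to+\infty$, uniformly for $t\in[a,b]$, and, being continuous, is bounded above on $[a,b]\times\R^d$; this bound serves as $K_p'$. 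Theorem \ref{thm-variante} then applies and yields both the invariance of $L^p(\R^d)$ under $G(t,s)$ for $(t,s)\in\Lambda_{[a,b]}$ and the estimate $\|G(t,s)f\|_{L^p(\R^d)}\le e^{K_p'(t-s)}\|f\|_{L^p(\R^d)}$.

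The only genuinely delicate point I foresee is the bookkeeping of exponents in the bound for $|\beta|$: one must notice that the term $(1+|x|^2)^m\sum_jD_jq_{ij}$ carries the exponent $m$ (rather than the $2m-1$ one might naively expect), which is exactly why ``$m$'' enters \eqref{cond}, while the drift term produces the exponent $2r+1-m$ after division by $\eta$, and the middle term (exponent $m-1/2<m$) is dominated and plays no role. Once the exponents are correctly tracked, the rest is the H\"older/Young estimate already performed inside the proof of Theorem \ref{thm-variante}.
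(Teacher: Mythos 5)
Your proposal is correct and follows exactly the route the paper intends: the paper's own proof of this proposition consists of the single remark that ``arguing as above, one can easily verify that, if \eqref{cond} holds, then condition \eqref{assumption} is fulfilled,'' and then invokes Theorem \ref{thm-variante}. You fill in the computation the paper leaves to the reader, correctly identifying $\eta(t,x)=\eta_0(1+|x|^2)^m$ as the ellipticity function, the three terms in $\beta_i$ (with exponents $r+1/2$, $m-1/2$ and $m$ respectively), and the resulting bound $|\beta|^2/(4(p-1)\eta)\lesssim(1+|x|^2)^{\max\{2r+1-m,\,m\}}$, so that \eqref{cond} (together with $C_{[a,b]}>0$ and Hypothesis~\ref{hyp2}(ii)) forces the left-hand side of \eqref{assumption} to $-\infty$ as $|x|\to+\infty$, uniformly in $t\in[a,b]$. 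You also rightly note that the argument needs $\nabla_x q_{ij}$ bounded on $[a,b]\times\R^d$; the proposition's statement leaves this implicit (it only says ``continuously differentiable''), so your ``cf.\ Hypothesis~\ref{hyp5}(i)'' is an appropriate flag of an unstated hypothesis rather than a flaw in your proof. Finally, your restriction to $p\in(1,+\infty)$ is forced by Theorem~\ref{thm-variante} and is the correct reading of the proposition, whose statement omits the range of $p$.
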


\begin{rmk}
\label{rem:6.7}
\rm
In this example, condition \eqref{cond} trivially implies Hypothesis \ref{hyp5}(ii).
Hence, the difference between Propositions \ref{prop-1} and \ref{prop-2} is just in the smoothness of
the coefficients. In general, as claimed before Theorem
\ref{thm-variante}, even for smooth coefficients, condition \eqref{assumption} may hold also in some situations where
condition \eqref{cond-diverg} is not satisfied.
Consider for instance the operator ${\mathcal A}$ defined by
\begin{eqnarray*}
(\mathcal{A}(t)\varphi)(x)=\Delta\varphi(x)-\frac{t^2+2}{t^2+1}\left (2+\sin(|x|^4)\right )\langle x,\nabla\varphi(x)\rangle
-(t^2+1)(1+|x|^2)^q\varphi(x),
\end{eqnarray*}
for any $(t,x)\in\R^{1+d}$, on smooth functions $\varphi$.

A straightforward computation shows that operator $\mathcal{A}$ satisfies Hypothesis \ref{hyp1}. On the other hand,
condition \eqref{cond-diverg} is satisfied, by any $[a,b]\subset I$ provided that $q_{[a,b]}>2$,
whereas condition \eqref{cond} is satisfied (by any $p>1$ and any $[a,b]$ as above) provided
that $q_{[a,b]}>1$.
\end{rmk}

\subsection*{Acknowledgments} The authors wish to thank A. Lunardi, G. Metafune and D. Pallara for
fruitful discussions.

\end{document}